\definecolor{refkey}{rgb}{1,0,0.5}
\definecolor{labelkey}{rgb}{0,0.4,1}
\renewcommand{\@todonotes@drawMarginNoteWithLine}{%
	\begin{tikzpicture}[remember picture, overlay, baseline=-0.75ex]%
	\node [coordinate] (inText) {};%
	\end{tikzpicture}%
	\marginnote[{
		\@todonotes@drawMarginNote%
		\@todonotes@drawLineToLeftMargin%
	}]{
		\@todonotes@drawMarginNote%
		\@todonotes@drawLineToRightMargin%
	}%
}
\numberwithin{equation}{section}
\newtheorem{theorem}{Theorem}[section]
\newtheorem{lemma}[theorem]{Lemma}
\newtheorem{remark}[theorem]{Remark}
\newtheorem{definition}[theorem]{Definition}
\newcommand{\be}{\begin{equation}}
\newcommand{\ee}{\end{equation}}
\newcommand{\bee}{\begin{equation*}}
\newcommand{\eee}{\end{equation*}}
\newcommand{\bse}{\begin{subequations}}
\newcommand{\ese}{\end{subequations}}
\newcommand{\bs}{\begin{split}}
\newcommand{\es}{\end{split}}
\begin{document}

\author{Hairong Liu$^{1}$}\thanks{$^{1}$School of Mathematics and Statistics, Nanjing University of Science and Technology,  Nanjing 210094, China.
E-mail: hrliu@njust.edu.cn}

\author{Long Tian$^{2}$}\thanks{$^{2}$School of Mathematics and Statistics, Nanjing University of Science and Technology,  Nanjing 210094, China.
E-mail: tianlong19850812@163.com}

\author{Xiaoping Yang$^{3}$}\thanks{$^{3}$School of Mathematics,  Nanjing University, Nanjing 210093, China.
E-mail: xpyang@nju.edu.cn}

\title[]{Measure upper bounds of nodal sets of solutions to  Dirichlet problem of Schr\"{o}dinger equations}

\begin{abstract}

 In this paper, we focus on estimating measure upper bounds of nodal sets of solutions to the following boundary value problem
\begin{equation*}
\left\{
\begin{array}{lll}
\Delta u+Vu=0\quad \mbox{in}\ \Omega,\\[2mm]
u=0\quad \mbox{on}\ \partial\Omega,
\end{array}\right.
\end{equation*}
where  $V\in W^{1,\infty}(\Omega)$ is  a potential function,  and
$\Omega \subset \mathbb{R}^n$ ($n \geq 2$) is a bounded domain whose boundary is of class  $C^{1,\alpha}$ for any $0<\alpha<1$.
By developing a delicate dividing iteration procedure, we show that  upper bound  of the $(n-1)$-dimensional
Hausdorff measure of the nodal set
of $u$ in $\Omega$ is
$$C\Big(1+\log\left(\|\nabla V\|_{L^{\infty}(\Omega)}+1\right)\Big)\cdot\left(\|V\|_{L^{\infty}(\Omega)}^{\frac{1}{2}}+\|\nabla V\|_{L^{\infty}(\Omega)}^{\frac{1}{2}}+1\right),$$ provided  $V$ is analytic,
here $C$ is  a positive constant depending only on $n$ and $\Omega$. In particular, if $\|\nabla V\|_{L^{\infty}(\Omega)}$ is small,  the upper bound for the measure of the  nodal set of $u$ is $C\left(\|V\|^{\frac{1}{2}}_{L^{\infty}(\Omega)}+1\right)$, which is sharp in the sense of a famous conjecture of Yau.

\noindent {\bf Keywords}:  Frequency function;  Doubling index; Dividing; Smallness propagation; Iteration procedure; Nodal set.

\noindent {\bf AMS Subject Classifications (2020).} 35J10; 35P20.

\end{abstract}

\maketitle

\section{Introduction and main theorem}

Let $u$ be the Laplacian eigenfunctions
\begin{equation*}
-\Delta_{\mathcal{M}} u=\lambda u
\end{equation*}
on a compact $C^{\infty}$-smooth Riemannian manifold  $\mathcal{M}$ without boundary, S.T. Yau \cite{Yau} conjectured that the Hausdorff
measure of nodal sets can be controlled by eigenvalues as
\begin{equation*}
C_1\sqrt{\lambda}\leq \mathcal{H}^{n-1}(\left\{x\in\mathcal{M}\ |\ u(x)=0\}\right)\leq C_2\sqrt{\lambda},
\end{equation*}
where the positive constants
$C_1$ and $C_2$  depend on the manifold $\mathcal{M}$,  $ \mathcal{H}^{n-1}$ denotes the $(n-1)$-dimensional Hausdorff measure.

The investigation of nodal sets has made significant progress in different geometric  and PDE environments.
For real analytic manifolds,  Yau's conjecture was first solved in the two-dimensional case through independent work by Br\"uning and Yau, while Donnelly and Fefferman \cite{Donnelly} established the higher dimensional case. Lin \cite{Lin} developed an alternative approach to obtain upper bounds for general second-order elliptic equations on analytic manifolds.

In the smooth manifolds, Donnelly and Fefferman \cite{Donnelly2} proved that the measure upper bound of nodal sets of eigenfunctions is $C\lambda^{\frac{3}{4}}$ in 2-dimensional case.   This was extended to arbitrary dimensions by Hardt and Simon \cite{Hardt}, who obtained the bound $\lambda^{C\sqrt{\lambda}}$.
Logunov and Malinnikova \cite{LM} improved the two dimensional  result in \cite{Donnelly2} to $C\lambda^{\frac{3}{4}-\epsilon}$ for some $\epsilon\in (0,\frac{1}{4})$. Recently, Logunov \cite{Logunov} showed the general upper bound $\lambda^{\alpha}$ for some $\alpha>\frac{1}{2}$ for the higher dimensions.

Regarding the lower bound of Yau's conjecture, substantial progress was made through a series of important works, culminating in Logunov's complete resolution \cite{Logunov3} (see also references therein for related results). In the context of manifolds with boundary, Donnelly and Fefferman \cite{DF1990} established both upper and lower bounds for nodal sets of Laplace-Beltrami eigenfunctions on compact, connected, real-analytic manifolds with analytic boundary. The smooth case was addressed by Ariturk \cite{Ar2013}, who obtained lower bounds for  Dirichlet and Neumann eigenfunctions on compact $C^{\infty}$-smooth Riemannian manifolds with boundary. Recently, Logunov, Malinnikova, Nadirashvili
and Nazarov \cite{Logunov2}
obtained  the sharp upper bound for the area of the nodal sets of Dirichlet Laplace eigenfunctions
for the case of bounded domains with $C^1$-smooth boundary.
 For extensions to higher-order operators with various boundary conditions, we refer to the works \cite{K1995, lin2022, TY2}.

The goal of the paper is to investigate and give the measure upper bound of nodal sets of  non-trivial  solutions
to  the following  Dirichlet problem of Schr\"{o}dinger equations:
\begin{equation}\label{basic equation}
\left\{
\begin{array}{lll}
\Delta u+Vu=0\quad \mbox{in}\ \Omega,\\
u=0\quad \mbox{on}\ \partial\Omega.
\end{array}
\right.
\end{equation}


Our main result is the following theorem.
\begin{theorem}\label{main result 2}
Assume $\Omega\subset\mathbb{R}^{n}$ is a bounded domain with $C^{1,\alpha}$ (for any $0<\alpha<1$) boundary and $V\in W^{1,\infty}(\Omega)$ is
  analytic.
Let $u$ be a non-trivial solution of (\ref{basic equation}).
 Then
\begin{equation*}
\mathcal{H}^{n-1}\Big(\{x\in\Omega\ |\ u(x)=0\}\Big)\leq C\Big(1+\log\left(\|\nabla V\|_{L^{\infty}}+1\right)\Big)\cdot\left(\|V\|_{L^{\infty}}^{\frac{1}{2}}+\|\nabla V\|_{L^{\infty}}^{\frac{1}{2}}\right),
\end{equation*}
where $C$ is a positive constant depending only on $n$ and $\Omega$.
In particular, if $\|\nabla V\|_{L^{\infty}}$ is small, then
\begin{equation*}
\mathcal{H}^{n-1}\Big(\{x\in\Omega\ |\ u(x)=0\}\Big)\leq C\|V\|^{\frac{1}{2}}_{L^{\infty}}.
\end{equation*}
\end{theorem}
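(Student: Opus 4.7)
The overall plan is to adapt the doubling-index and cube-subdivision machinery of Donnelly--Fefferman and Logunov--Malinnikova to the Schr\"odinger setting, carefully tracking the dependence on $\|V\|_{L^\infty}$ and $\|\nabla V\|_{L^\infty}$ throughout. I would first separate the nodal set into an interior piece and a tubular neighborhood of $\partial\Omega$ of some small width $r_{0}$. On the boundary strip, the $C^{2}$ regularity of $\partial\Omega$ combined with a local flattening and a reflection/extension argument for Dirichlet data lets one reduce the boundary nodal estimate to an interior-type estimate on half-balls; the tubular neighborhood itself contributes at most $C\mathcal{H}^{n-1}(\partial\Omega)$. Analyticity of $V$ is used only to guarantee that $u$ is real-analytic in $\Omega$, so that the nodal set is a semi-analytic variety of locally finite $(n-1)$-Hausdorff measure and every quantity being estimated is a priori finite.

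For the interior estimate I would introduce an Almgren-type frequency function adapted to the equation,
$$N(x_{0}, r) = \frac{r \int_{B_{r}(x_{0})} \bigl( |\nabla u|^{2} - V u^{2}\bigr) dy}{\int_{\partial B_{r}(x_{0})} u^{2}\, d\sigma},$$
and establish an almost-monotonicity statement of the form: there exist constants $C_{0}(n)$ and $A(V,r) \sim \|V\|_{L^{\infty}} r^{2}$ such that $e^{C_{0}\sqrt{\|V\|_\infty}\,r}\bigl(N(x_{0},r)+A(V,r)\bigr)$ is nondecreasing in $r$. From this monotonicity and standard Moser $L^{2}$--$L^{\infty}$ comparison, one derives a doubling inequality
$$\sup_{B_{2r}(x_{0})} |u| \le 2^{\,N^{\ast}(x_{0},Cr) + E(V,r)} \sup_{B_{r}(x_{0})}|u|,$$
where $N^{\ast}$ is the supremum of $N$ on a slightly enlarged ball and $E(V,r)$ is a controlled error absorbing the potential. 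An appropriate sup of this local doubling index, taken over a Whitney-type decomposition of $\Omega$, is bounded by $C(\|V\|_{L^{\infty}}^{1/2}+\|\nabla V\|_{L^{\infty}}^{1/2}+1)$.

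The heart of the argument is a dividing/simplex lemma in the spirit of Logunov: on any cube $Q\subset\Omega$ of side $r$, if the local doubling index exceeds a large threshold $K$, then after subdividing $Q$ into $A^{n}$ congruent subcubes, the number of subcubes on which the doubling index remains $\geq K/2$ is at most $CA^{n-1-\beta}$ for some universal $\beta>0$, provided $r$ is small enough compared to $K^{-1/2}$. Iterating this subdivision over $\log_{A}(1/r)$ dyadic generations, combined with the per-cube bound $\mathcal{H}^{n-1}(\{u=0\}\cap Q) \le C(1+N_{Q})\,r^{n-1}$, converts the doubling estimate into a polynomial bound on the nodal volume. Inserting the potential-dependent doubling estimate yields the announced $(\|V\|_{L^{\infty}}^{1/2}+\|\nabla V\|_{L^{\infty}}^{1/2}+1)$ factor; the logarithmic prefactor $1+\log(\|\nabla V\|_{L^{\infty}}+1)$ appears as the number of scales at which the first-order correction coming from $\nabla V$ accumulates in the almost-monotonicity inequality.

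The main obstacle I anticipate is making the dividing lemma work uniformly in $V$: the Schr\"odinger perturbation reintroduces an error at every scale, so the induction is not purely scale-invariant. One must choose the subdivision parameter $A$ and a transition scale $r_{\ast}$ such that for $r<r_{\ast}$ the equation is a small $C^{1}$-perturbation of $\Delta u = 0$ and Logunov's dividing step works verbatim, while for $r\ge r_{\ast}$ one uses the crude $\|V\|_{L^{\infty}}$ bound together with smallness propagation to control the contributions of the finitely many remaining cubes. The interface between these two regimes is precisely where the $\log(\|\nabla V\|_{L^\infty}+1)$ factor is forced upon us, and when $\|\nabla V\|_{L^\infty}$ is already small, the transition scale $r_{\ast}$ becomes $\gtrsim 1$ and the logarithmic term disappears, recovering the stated sharper bound.
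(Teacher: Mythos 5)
The main gap is in your treatment of the boundary region. You claim the reflection/extension argument reduces the nodal estimate near $\partial\Omega$ to "an interior-type estimate on half-balls," and that the tubular neighborhood "contributes at most $C\mathcal{H}^{n-1}(\partial\Omega)$." Neither is correct: after flattening and odd reflection the coefficients of the reflected operator carry a $\mathrm{sgn}(y_n)$ discontinuity across the flattened boundary (the off-diagonal terms $L^{\#}_{in}=-\gamma_{y_i}\,\mathrm{sgn}(y_n)$), so the complexification/integral geometry bound that works in the interior does not apply to cubes touching $\Gamma$. The paper instead invokes a Hardt--Simon type result, which is only available when the cube side is at most $\|\nabla V\|^{-1/2}_{L^\infty}\bigl(M_{\widetilde u}(Q)\bigr)^{-CM_{\widetilde u}(Q)}$. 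That very strong smallness condition is what forces the dividing iteration to be carried deep into the hierarchy before any boundary-touching cube can be estimated, and in turn forces the starting cube size to be $\lesssim\|\nabla V\|^{-1/2}_{L^\infty}$. The factor $1+\log(\|\nabla V\|_{L^\infty}+1)$ is precisely the number of dyadic annuli needed to interpolate between that small scale and the fixed scale $r_0$ of the interior estimate; it is not, as you suggest, an accumulated error in almost-monotonicity, and the nodal measure inside the tubular neighborhood certainly grows with $\lambda^{1/2}$ rather than being $O(\mathcal H^{n-1}(\partial\Omega))$.

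Two further mismatches. You propose an almost-monotone Almgren frequency $e^{C_0\sqrt{\|V\|_\infty}\,r}(N+A(V,r))$, but the paper avoids any error term entirely by lifting to $\bar u(x,t)=u(x)e^{\sqrt{\lambda}t}$ with $\lambda=\|V\|_{W^{1,\infty}}$, which makes the shifted potential $\bar V=V-\lambda$ nonpositive (indeed $\bar V+|\nabla\bar V|\le 0$) and yields \emph{exact} monotonicity of a Kukavica-style volume-normalized frequency; your error term would propagate through the whole iteration. Finally, you use analyticity of $V$ only to guarantee finiteness of the nodal measure, but its crucial role here is different: Lin's complexification plus the integral-geometry formula give a nodal bound $\mathcal H^n(\{\widetilde u=0\}\cap Q)\le C(M_{\widetilde u}(Q)+1)r^n$ that is \emph{linear} in the doubling index. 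Because of this linear bound, a very weak dividing statement suffices (at least one subcube per layer with doubling index halved, giving a geometric factor $\kappa=1-\tfrac12 A^{-n}$), whereas the Logunov-type hyperplane count $CA^{n-1-\beta}$ you propose is both unnecessary and much harder to establish uniformly in $V$.
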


The analysis of nodal sets of solutions to  the Dirichlet boundary value problem (\ref{basic equation})  with non-constant potentials $V(x)$ in $C^{1,\alpha}$ domain brings about  two essential difficulties. First, the presence of a non-constant potential term   prevents any direct reduction to harmonic function techniques,
 a method commonly employed in the study of nodal sets to eigenfunctions (see [28]).
Second, in the case of the non-analyticity of the domain, the
 powerful tools of analytic continuation  become unavailable. These combined features necessitate the development of new quantitative approaches that can simultaneously handle the non-harmonic nature of the solutions and the boundary behavior in low-regularity settings.

In order to overcome the difficulties,
we first lift the original equation to that with a nonpositive potential,
and then define a variant frequency function and a doubling index of the solution to the lifting equation.  More precisely,
 let $u$ solve the original problem (\ref{basic equation}).
  Define
\begin{equation*}
\bar{u}(x,x_{n+1})=\bar{u}(z)=u(x)e^{\sqrt{\lambda}x_{n+1}},
\end{equation*}
where $\lambda=\|V\|_{L^{\infty}}+\|\nabla V\|_{L^{\infty}}$,
then for any $R>1$, $\bar{u}$ satisfies the equation
\begin{equation}\label{add one dim-1}
\left\{
\begin{array}{lll}
\Delta_{x,t}\bar{u}(x,t)+\bar{V}(x)\bar{u}(x,t)=0,\quad (x,t)\in\Omega\times(-R,R)\equiv\widetilde{\Omega},\\[2mm]
\bar{u}|_{\partial\Omega\times(-R,R)}=0,
\end{array}
\right.
\end{equation}
where $\bar{V}(x)=V(x)-\lambda\in W^{1,\infty}\left(\widetilde{\Omega}\right)$
and   $\bar{V}\leq 0$, $\bar{V}+|\nabla\bar{V}|\leq 0$ in $\widetilde{\Omega}$.
By establishing a monotonicity formula for the frequency function and deriving precise a priori estimates, we obtain quantitative upper bounds for the doubling index that depend explicitly on the potential through the quantity $\|V\|^{1/2}_{W^{1,\infty}}$ (see Lemma \ref{upper bound of frequency}).

The Key advantage of lifting technique is to obtain the non-positivity of the new potential $\bar{V}$ in (\ref{add one dim-1}) which enables to establish an useful optimal estimate of the doubling index and an improved $L^{\infty}-L^{2}$ estimate through De Giorgi-Nash-Moser method. These, in turn, are able to  facilitate to derive the  upper bounds of the doubling index,  which have the explicit expression as $\|V\|^{\frac{1}{2}}_{W^{1,\infty}}$.

The measure upper bound estimate is much challenging. We
develop a refined iterative procedure to estimate  the measure upper bounds
 for the nodal sets of solutions through careful controlling doubling indexes.
More precisely, we first divide the domain into two regions: one is
away from the boundary and another is near the boundary. In the interior,
 we combine the lifting technique with complexification methods and integral geometry formula to derive measure bounds that is controlled by  $\|V\|^{1/2}_{W^{1,\infty}}$ (Lemmas \ref{interior nodal set}-\ref{nodal set of cube}).
 In order to deal with the nodal sets  near the boundary, we introduce a novel iterative partitioning procedure inspired by \cite{Logunov3}, but incorporating several elaborate techniques to control the number of cubes with large doubling indexes and estimate the measures of nodal sets in  cubes with different doubling indexes approximating the boundary $\partial\Omega$.
Actually, the procedure involves: (i) establishing propagation of smallness and successively partitioning a given cube; (ii) carefully estimating the number of  cubes with large doubling indexes; and (iii) computing the measure of nodal sets in a given cube by using different arguments to deal with the cubes having small or  large doubling indexes.

The rest of the paper is organized as follows. In Section 2, we first lift the original equation to that with a signed potential, then
define a frequency function and
prove the monotonicity formula and doubling estimates for the corresponding frequency. In Section 3, we define the doubling index, estimate the upper bound of the doubling index and prove that the maximal vanishing order of $u$ in $\Omega$ is  $C\left(\|V\|^{\frac{1}{2}}_{W^{1,\infty}(\Omega)}+1\right)$. Finally, in Section 4, we estimate the upper bound of the measure of nodal sets of $u$ in $\Omega$ and prove Theorem \ref{main result 2}.

\section{Frequency function}

In this section, we first transform the equation  (\ref{basic equation}) by lifting into an equation with a signed potential term, then define a frequency function $N(r)$ and establish its  monotonicity. Finally, with help of the  monotonicity of $N(r)$, we derive some doubling estimates and a changing center property of $N(r)$.

Let $u=u(x)$ be a solution to (\ref{basic equation}) and the lift of $u$ be defined by
\begin{align*}
\bar{u}(x,t)=u(x)e^{\sqrt{\lambda}t},
\end{align*}
where \begin{equation}\label{definition of lambda}
\lambda=\|V\|_{W^{1,\infty}}\equiv\|V\|_{L^{\infty}}+\|\nabla V\|_{L^{\infty}}.
 \end{equation}
 Then for any $R>1$, $\bar{u}$ satisfies the equation
\begin{equation}\label{add one dim}
\left\{
\begin{array}{lll}
\Delta_{x,t}\bar{u}(x,t)+\bar{V}(x)\bar{u}(x,t)=0,\quad (x,t)\in\Omega\times(-R,R)\equiv\widetilde{\Omega},\\[2mm]
\bar{u}|_{\partial\Omega\times(-R,R)}=0,
\end{array}
\right.
\end{equation}
where $\bar{V}(x)=V(x)-\lambda\in W^{1,\infty}\left(\widetilde{\Omega}\right)$
and   $\bar{V}\leq 0$, $\bar{V}+|\nabla\bar{V}|\leq 0$ in $\widetilde{\Omega}$  thanks to the definition of  $\lambda$. Noting that the nodal sets of $u$ and $\bar{u}$ are mutually controlled,
from now on, we only need to study equation (\ref{add one dim}) and $\bar{u}(x,t)$. Let $z=(x,t)$,
and $B_r(z_0)$ denote the ball centered at $z_0$ with radius $r$  in $\mathbb{R}^{n+1}$.

It is worth noting that the fact $\bar{V}\leq 0$ enables us to obtain the optimal estimate of the doubling index (Lemma 3.5).  Moveover,
in view of the fact  $\bar{V}\leq0$, we  establish an improved $L^{\infty}-L^2$ estimate (through De Giorgi-Nash-Moser) where the constant independent of $\bar{V}$ (Lemma \ref{degiorgi}).
In turn,
all constants $C$ appearing in our paper are independent of $V$. This is the key advantage of the technique for lifting the equation (\ref{basic equation}) into (\ref{add one dim}).

We first define a variant frequency function which goes back to Kukavica \cite{Kukavica}
for Ginzburg-Landau equations.
\begin{definition}
For $z_0=(x_0,0)$ with $x_0\in\Omega$, let
\begin{align}\label{H}
H_{\bar{u}}(z_0,r)=\int_{B_r(z_0)\cap\widetilde{\Omega}}\bar{u}^2dz,
\end{align}
\begin{align*}
I_{\bar{u}}(z_0,r)=\int_{B_r(z_0)\cap\widetilde{\Omega}}|\nabla\bar{u}|^2(r^2-|z-z_0|^2)dz
-\int_{B_r(z_0)\cap\widetilde{\Omega}}\bar{V}\bar{u}^2(r^2-|z-z_0|^2)dz.
\end{align*}
The frequency function is defined as
\begin{equation*}
N_{\bar{u}}(z_0,r)=\frac{I_{\bar{u}}(z_0,r)}{H_{\bar{u}}(z_0,r)}.
\end{equation*}
\end{definition}

We note that, by using equation (\ref{add one dim}), it is easy to see that
\begin{align}\label{another-I}
I_{\bar{u}}(z_0,r)=2\int_{B_r(z_0)\cap\widetilde{\Omega}}\bar{u}\nabla\bar{u}\cdot(z-z_0)dz.
\end{align}
 Indeed,   integrating by parts and using equation (\ref{add one dim}), one has
\begin{align*}
&2\int_{B_r(z_0)\cap\widetilde{\Omega}}\bar{u}\nabla\bar{u}\cdot(z-z_0)dz\\[2mm]
&=-\int_{B_r(z_0)\cap\widetilde{\Omega}}\bar{u}\nabla\bar{u}\cdot\nabla(r^2-|z-z_0|^2)dz\\[2mm]
&=\int_{B_r(z_0)\cap\widetilde{\Omega}}\mbox{div}(\bar{u}\nabla\bar{u})(r^2-|z-z_0|^2)dz\\[2mm]
&=\int_{B_r(z_0)\cap\widetilde{\Omega}}|\nabla\bar{u}|^2(r^2-|z-z_0|^2)dz
+\int_{B_r(z_0)\cap\widetilde{\Omega}}\bar{u}\Delta\bar{u}(r^2-|z-z_0|^2)dz\\[2mm]
&=I_{\bar{u}}(z_0,r).
\end{align*}

Now, we will establish the monotonicity property of the
frequency function, which is the key tool to prove our main theorem. Precisely
\begin{lemma}\label{monotonicity}
Let $\bar{u}\in W^{1,2}(\widetilde{\Omega})$ be  a solution of (\ref{add one dim}), then  for   $z_0=(x_0,0)$ with $x_0\in\Omega$,
$0<r<1$, if $B_r(x_0)\cap\Omega$ is  star-shaped with respect to $x_0$, it holds
\begin{equation*}
\frac{d}{dr}N_{\bar{u}}(z_0,r)\geq0.
\end{equation*}
\end{lemma}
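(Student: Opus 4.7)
The plan is to prove $\frac{d}{dr}N_{\bar u}(z_0, r) \geq 0$ by reducing the Kukavica-style frequency to a classical Almgren-type frequency, whose monotonicity is handled by Rellich's identity combined with Cauchy--Schwarz. First I would set up core surface/volume identities. Let $\Sigma_r := \partial B_r(z_0)\cap\widetilde\Omega$, $\Gamma_r := B_r(z_0)\cap\partial\widetilde\Omega$, and
\begin{equation*}
D(r) := \int_{B_r(z_0)\cap\widetilde\Omega}\bigl(|\nabla\bar u|^2 - \bar V\bar u^2\bigr)\,dz.
\end{equation*}
Differentiating $I(r)$ through the $(r^2-|z-z_0|^2)$-weight yields $I'(r) = 2rD(r)$. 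Green's identity combined with equation (\ref{add one dim}) and $\bar u|_{\Gamma_r}=0$ gives $D(r) = \int_{\Sigma_r}\bar u\,\partial_\rho\bar u\,d\sigma$, while Reynolds's transport theorem together with the same vanishing yields $H_{\bar u}'(z_0, r) = \int_{\Sigma_r}\bar u^2\,d\sigma$. A polar expansion centered at $z_0$ then produces
\begin{equation*}
I(r) = 2\int_0^r \rho D(\rho)\,d\rho,\qquad H(r) = \int_0^r H'(\rho)\,d\rho.
\end{equation*}

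Next I would carry out the reduction to a classical frequency. Setting $\alpha(\rho) := \rho D(\rho)/H'(\rho)$, the identities above express $N_{\bar u}(z_0, r)/2$ as the $H'$-weighted average of $\alpha$ on $[0, r]$, and direct differentiation gives
\begin{equation*}
\bigl(N_{\bar u}(z_0, r)/2\bigr)'(r) = \frac{H'(r)}{H(r)}\bigl(\alpha(r) - N_{\bar u}(z_0,r)/2\bigr).
\end{equation*}
Since star-shapedness of $B_r(x_0)\cap\Omega$ with respect to $x_0$ is inherited by $B_\rho(x_0)\cap\Omega$ for every $\rho\leq r$, it suffices to prove that $\alpha$ is non-decreasing on $[0, r]$: then the weighted average satisfies $\alpha(r)\geq N_{\bar u}(z_0, r)/2$, and the desired $N_{\bar u}'(z_0, r)\geq 0$ follows.

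The crux is showing $\alpha'(r)\geq 0$. I would apply Rellich's identity on $\widetilde B_r := B_r(z_0)\cap\widetilde\Omega$ (multiplying $\Delta\bar u = -\bar V\bar u$ by $(z-z_0)\cdot\nabla\bar u$ and integrating),
\begin{equation*}
(n-1)\int_{\widetilde B_r}|\nabla\bar u|^2\,dz + 2\int_{\widetilde B_r}\bar V\bar u\,(z-z_0)\cdot\nabla\bar u\,dz = r\int_{\Sigma_r}\bigl(|\nabla\bar u|^2 - 2(\partial_\rho\bar u)^2\bigr)d\sigma - \int_{\Gamma_r}(x-x_0)\cdot\nu_\Omega\,(\partial_{\nu_\Omega}\bar u)^2\,d\sigma.
\end{equation*}
The star-shapedness hypothesis makes $(x-x_0)\cdot\nu_\Omega\geq 0$ on $\partial\Omega\cap B_r(x_0)$, so the lateral boundary term has the favorable sign needed to rearrange Rellich into a lower bound for $\int_{\Sigma_r}(\partial_\rho\bar u)^2\,d\sigma$. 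Combining this with the Cauchy--Schwarz inequality $D(r)^2\leq H'(r)\int_{\Sigma_r}(\partial_\rho\bar u)^2\,d\sigma$ reduces $\alpha'(r)\geq 0$ to the non-negativity of a potential-dependent remainder; integration by parts in the cross term $\int_{\widetilde B_r}\bar V\bar u\,(z-z_0)\cdot\nabla\bar u\,dz$ together with the sign conditions $\bar V\leq 0$ and $\bar V + |\nabla\bar V|\leq 0$ (built in by the lift $\bar u = ue^{\sqrt\lambda t}$ with $\lambda = \|V\|_{W^{1,\infty}}$), plus the restriction $r<1$ to dominate $|(z-z_0)\cdot\nabla\bar V|$ by $-\bar V$, closes the estimate.

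The main obstacle is this last potential-management step: after Rellich and Cauchy--Schwarz, one must integrate by parts the cross term, track the resulting spherical and lateral boundary contributions, and then absorb every remaining volume integral using the precise sign structure $\bar V + |\nabla\bar V|\leq 0$ and the restriction $r<1$. The rest of the argument (Rellich, Cauchy--Schwarz, and the weighted-average reduction) is essentially formal once these identities are in place.
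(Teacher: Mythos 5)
Your argument is correct, and it takes a genuinely different route from the paper's. The paper works entirely at the level of the weighted solid integral $I_{\bar u}(z_0,r)$: it computes $\frac{d}{dr}I_{\bar u}$ in full via a long chain of integrations by parts (producing the terms $J_1,\dots,J_5$), uses star-shapedness to discard $J_1\geq 0$, uses $\bar V+|\nabla\bar V|\leq 0$ and $r\leq 1$ to discard $2I^{(2)}+J_5\leq 0$, and then closes with a \emph{volume} Cauchy--Schwarz inequality $4J_4 H\geq I^2$ applied to $\frac{d}{dr}N = \frac{I'H-H'I}{H^2}$. Your proposal instead unweights the frequency: from $I'(r)=2rD(r)$ and $H(r)=\int_0^r H'$ you recognize $N/2$ as the $H'$-weighted average of the classical Almgren boundary-integral frequency $\alpha(\rho)=\rho D(\rho)/H'(\rho)$, observe that star-shapedness is inherited by all $\rho\leq r$, and reduce the whole problem to $\alpha'\geq 0$. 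That in turn is the standard Rellich-plus-\emph{surface}-Cauchy--Schwarz argument ($D^2\leq H'\int_{\Sigma_r}(\partial_\rho\bar u)^2$), with the lateral Rellich term killed by star-shapedness and the remaining cross and potential terms absorbed by $\bar V+|\nabla\bar V|\leq 0$ and $r<1$ after one integration by parts, exactly as you indicate. I verified that after the Rellich rearrangement the obstruction reduces to $-\int_{\widetilde B_r}\bigl(\nabla\bar V\cdot(z-z_0)+2\bar V\bigr)\bar u^2\,dz\geq 0$, which holds under the stated sign hypotheses, so the sketch does close. What the weighted-average reduction buys you is conceptual clarity (the Kukavica frequency sits on top of the classical one, so ``classical monotonicity $\Rightarrow$ weighted monotonicity'') and it replaces the paper's heavy volume-level bookkeeping with a surface-level computation; what the paper's direct route buys is that it never needs to differentiate $H'$ (i.e.\ never touches $H''$), which your route implicitly requires when differentiating $\alpha$. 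One small slip: the Rellich identity plus star-shapedness gives an \emph{upper} bound for $\int_{\Sigma_r}(\partial_\rho\bar u)^2\,d\sigma$, not a lower bound as you wrote --- an upper bound is in fact what you need, since the Cauchy--Schwarz term $-\tfrac{2rD^2}{(H')^2}$ is then controlled from below. This is a wording error only and does not affect the validity of the argument.
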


\begin{proof}
Firstly, we note that
\begin{equation}\label{cond}
\bar{u}|_{\partial{\widetilde{\Omega}}\cap B_r(z_0)}=0
\end{equation}  in virtue of $\bar{u}|_{\partial\Omega\times(-R,R)}=0$,  the definition of $\widetilde{\Omega}$ and $r<1$.
Taking the derivative for $H_{\bar{u}}(z_0,r)$ with respect to $r$,  and using the boundary condition (\ref{cond}), one has
\begin{align}\label{derivative of H}
H_{\bar{u}}'(z_0,r)&=\int_{\partial B_r(z_0)\cap\widetilde{\Omega}}\bar{u}^2d\sigma\nonumber\\[2mm]
&=\frac{1}{r}\int_{\partial \left(B_r(z_0)\cap\widetilde{\Omega}\right)}\bar{u}^2(z-z_0)\cdot\nu d\sigma\nonumber\\[2mm]
&=\frac{1}{r}\int_{B_r(z_0)\cap\widetilde{\Omega}}\mbox{div}(\bar{u}^2(z-z_0)) dz\nonumber\\[2mm]
&=\frac{n+1}{r}H_{\bar{u}}(z_0,r)+\frac{2}{r}\int_{B_r(z_0)\cap\widetilde{\Omega}}\bar{u}\nabla\bar{u}\cdot (z-z_0)dz,
\end{align}
where $\nu$ is the unit outer normal vector on $\partial\left(B_r(z_0)\cap\widetilde{\Omega}\right)$.
Putting (\ref{another-I})  into (\ref{derivative of H}), one concludes
\begin{equation}\label{H-fin}
H_{\bar{u}}'(z_0,r)=\frac{n+1}{r}H_{\bar{u}}(z_0,r)+\frac{1}{r}I_{\bar{u}}(z_0,r).
\end{equation}
Now we calculate the derivative of $I_{\bar{u}}(z_0,r)$. Recalling
\begin{align*}
I_{\bar{u}}(z_0,r)&=\int_{B_r(z_0)\cap\widetilde{\Omega}}|\nabla\bar{u}|^2(r^2-|z-z_0|^2)dz
-\int_{B_r(z_0)\cap\widetilde{\Omega}}\bar{V}\bar{u}^2(r^2-|z-z_0|^2)dz\\[2mm]
&\equiv I^{(1)}_{\bar{u}}(z_0,r)-I^{(2)}_{\bar{u}}(z_0,r).
\end{align*}
Firstly,
using the following identity,
\begin{equation*}
2|z-z_0|^2=-(z-z_0)\cdot\nabla(r^2-|z-z_0|^2)
\end{equation*}
and integrating by parts, it holds
\begin{align}\label{I1}
\frac{d}{dr}I^{(1)}_{\bar{u}}(z_0,r)&=2r\int_{B_r(z_0)\cap\widetilde{\Omega}}|\nabla\bar{u}|^2dz
\nonumber\\[2mm]
&=\frac{2}{r}\int_{B_r(z_0)\cap\widetilde\Omega}|\nabla\bar{u}|^2(r^2-|z-z_0|^2)dz
+\frac{2}{r}\int_{B_r(z_0)\cap\widetilde{\Omega}}|\nabla\bar{u}|^2|z-z_0|^2dz
\nonumber\\[2mm]
&=\frac{2}{r}I_{\bar{u}}^{(1)}(z_0,r)
-\frac{1}{r}\int_{B_r(z_0)\cap\widetilde{\Omega}}|\nabla\bar{u}|^2(z-z_0)\cdot\nabla(r^2-|z-z_0|^2)dz
\nonumber\\[2mm]
&=\frac{2}{r}I_{\bar{u}}^{(1)}(z_0,r)-\frac{1}{r}\int_{B_r(z_0)\cap\partial\widetilde{\Omega}}|\nabla\bar{u}|^2(r^2-|z-z_0|^2)(z-z_0)\cdot\nu d\sigma
\nonumber\\[2mm]
&+\frac{1}{r}\int_{B_r(z_0)\cap\widetilde{\Omega}}\mbox{div}(|\nabla\bar{u}|^2(z-z_0))(r^2-|z-z_0|^2)dz
\nonumber\\[2mm]
&=:\frac{2}{r}I^{(1)}_{\bar{u}}(z_0,r)-\frac{1}{r}J_1+\frac{1}{r}J_2.
\end{align}
Integrating by parts again, and using the equation (\ref{add one dim}), the term $J_2$  becomes
\begin{align}\label{above}
J_2&=(n+1)\int_{B_r(z_0)\cap\widetilde{\Omega}}|\nabla\bar{u}|^2(r^2-|z-z_0|^2)dz+2\int_{B_r(z_0)\cap\widetilde{\Omega}}\nabla\bar{u}\cdot\nabla^2\bar{u}\cdot(z-z_0)(r^2-|z-z_0|^2)dz
\nonumber\\[2mm]
&=(n+1)I^{(1)}_{\bar{u}}(z_0,r)+2\int_{\partial(B_r(z_0)\cap\widetilde{\Omega})}\left(\nabla\bar{u}\cdot\nu\right)\left(\nabla\bar{u}\cdot(z-z_0)\right)(r^2-|z-z_0|^2)d\sigma
\nonumber\\[2mm]
&-2\int_{B_r(z_0)\cap\widetilde{\Omega}}\nabla\bar{u}\cdot(z-z_0)\Delta\bar{u}(r^2-|z-z_0|^2)dz
-2\int_{B_r(z_0)\cap\widetilde{\Omega}}|\nabla\bar{u}|^2(r^2-|z-z_0|^2)dz
\nonumber\\[2mm]
&+4\int_{B_r(z_0)\cap\widetilde{\Omega}}\left(\nabla\bar{u}\cdot(z-z_0)\right)^2dz\nonumber\\[2mm]
&=(n-1)I^{(1)}_{\bar{u}}(z_0,r)+2\int_{B_r(z_0)\cap\widetilde{\Omega}}\bar{V}\nabla\bar{u}\cdot(z-z_0)\bar{u}(r^2-|z-z_0|^2)dz
+4\int_{B_r(z_0)\cap\widetilde{\Omega}}(\nabla\bar{u}\cdot(z-z_0))^2dz\nonumber\\[2mm]
&+2\int_{\partial(B_r(z_0)\cap\widetilde{\Omega})}\left(\nabla\bar{u}\cdot\nu\right)\left(\nabla\bar{u}\cdot(z-z_0)\right)(r^2-|z-z_0|^2)d\sigma.
\end{align}
Since
\begin{equation*}
\nabla\bar{u}=\pm|\nabla\bar{u}|\nu  \quad \mbox{on}\quad  \partial\widetilde{\Omega}
\end{equation*}
thanks to  $\bar{u}|_{\partial\Omega\times(-R,R)}=0$. Then on $\partial\widetilde{\Omega}$ it holds
\begin{equation*}
\nabla\bar{u}\cdot\nu=\pm|\nabla \bar{u}|, \quad \nabla\bar{u}\cdot(z-z_0)=\pm|\nabla\bar{u}|(z-z_0)\cdot\nu.
\end{equation*}
Therefore, the boundary integral term in the right hand side of (\ref{above}) becomes
\begin{align}\label{1}
&\int_{\partial(B_r(z_0)\cap\widetilde{\Omega})}\left(\nabla\bar{u}\cdot\nu\right)\left(\nabla\bar{u}\cdot(z-z_0)\right)(r^2-|z-z_0|^2)d\sigma\nonumber\\[2mm]
&=\int_{B_r(z_0)\cap\partial\widetilde{\Omega}}\left(\nabla\bar{u}\cdot\nu\right)\left(\nabla\bar{u}\cdot(z-z_0)\right)(r^2-|z-z_0|^2)d\sigma\nonumber\\[2mm]
&=\int_{B_r(z_0)\cap\partial\widetilde{\Omega}}|\nabla\bar{u}|^2(r^2-|z-z_0|^2)(z-z_0)\cdot\nu d\sigma=J_1.
\end{align}
Putting (\ref{1}) into (\ref{above})  implies  that
\begin{align*}
J_2&=(n-1)I^{(1)}_{\bar{u}}(z_0,r)+2J_1\\[2mm]
&+2\int_{B_r(z_0)\cap\widetilde{\Omega}}\bar{V}\nabla\bar{u}\cdot(z-z_0)\bar{u}(r^2-|z-z_0|^2)dz
+4\int_{B_r(z_0)\cap\widetilde{\Omega}}(\nabla\bar{u}\cdot(z-z_0))^2dz.
\end{align*}
Plugging $J_1$ and $J_2$ into (\ref{I1}), one obtains
\begin{align*}
\frac{d}{dr}I^{(1)}_{\bar{u}}(z_0,r)&=\frac{n+1}{r}I^{(1)}_{\bar{u}}(z_0,r)+\frac{1}{r}\int_{B_r(z_0)\cap\partial\widetilde{\Omega}}|\nabla\bar{u}|^2(r^2-|z-z_0|^2)(z-z_0)\cdot\nu d\sigma\\[2mm]
&+\frac{2}{r}\int_{B_r(z_0)\cap\widetilde{\Omega}}\bar{V}\nabla\bar{u}\cdot(z-z_0)\bar{u}(r^2-|z-z_0|^2)dz
+\frac{4}{r}\int_{B_r(z_0)\cap\widetilde{\Omega}}(\nabla\bar{u}\cdot(z-z_0))^2dz\\[2mm]
&\equiv
\frac{n+1}{r}I^{(1)}_{\bar{u}}(z_0,r)+\frac{1}{r}J_1+\frac{2}{r}J_3+\frac{4}{r}J_4.
\end{align*}
Since $B_r(x_0)\cap\Omega$ is star-shaped with respect to $x_0$, so $B_r(z_0)\cap\widetilde{\Omega}$ is star-shaped with respect to $z_0$.  Then $\nu \cdot (z-z_0) \geq 0$ for any $z\in \partial\widetilde{\Omega}\cap B_r(z_0)$, that is
$J_1\equiv\int_{B_r(z_0)\cap\partial\widetilde{\Omega}}|\nabla\bar{u}|^2(r^2-|z-z_0|^2)(z-z_0)\cdot\nu d\sigma\geq 0$.
Thus
\begin{equation}\label{calculation of I1}
\frac{d}{dr}I^{(1)}_{\bar{u}}(z_0,r)\geq\frac{n+1}{r}I^{(1)}_{\bar{u}}(z_0,r)+\frac{2}{r}J_3+\frac{4}{r}J_4.
\end{equation}
Next, we calculate the derivative of $I_{\bar{u}}^{(2)}(z_0,r)$,
\begin{align*}
\frac{d}{dr}I^{(2)}_{\bar{u}}(z_0,r)&=2r\int_{B_r(z_0)\cap\widetilde{\Omega}}\bar{V}\bar{u}^2dz
\\[2mm]
&=\frac{2}{r}I^{(2)}_{\bar{u}}(z_0,r)+\frac{2}{r}\int_{B_r(z_0)\cap\widetilde{\Omega}}\bar{V}\bar{u}^2|z-z_0|^2dz
\\[2mm]
&=\frac{2}{r}I^{(2)}_{\bar{u}}(z_0,r)-\frac{1}{r}\int_{B_r(z_0)\cap\widetilde{\Omega}}\bar{V}\bar{u}^2(z-z_0)\cdot\nabla(r^2-|z-z_0|^2)dz
\\[2mm]
&=\frac{2}{r}I^{(2)}_{\bar{u}}(z_0,r)+\frac{1}{r}\int_{B_r(z_0)\cap\widetilde{\Omega}}div(\bar{V}\bar{u}^2(z-z_0))(r^2-|z-z_0|^2)dz
\\[2mm]
&=\frac{n+3}{r}I^{(2)}_{\bar{u}}(z_0,r)+\frac{1}{r}\int_{B_r(z_0)\cap\widetilde{\Omega}}\nabla\bar{V}\cdot(z-z_0)\bar{u}^2(r^2-|z-z_0|^2)dz
\\[2mm]
&+\frac{2}{r}\int_{B_r(z_0)\cap\widetilde{\Omega}}\bar{V}\bar{u}\nabla\bar{u}\cdot(z-z_0)(r^2-|z-z_0|^2)dz
\\[2mm]
&=\frac{n+3}{r}I^{(2)}_{\bar{u}}(z_0,r)+\frac{1}{r}J_5+\frac{2}{r}J_3,
\end{align*}
which together with (\ref{calculation of I1})  yields
\begin{align*}
\frac{d}{dr}I_{\bar{u}}(z_0,r)&=\frac{d}{dr}I^{(1)}_{\bar{u}}(z_0,r)-\frac{d}{dr}I^{(2)}_{\bar{u}}(z_0,r)\nonumber
\\&\geq\frac{n+1}{r}I_{\bar{u}}(z_0,r)+\frac{4}{r}J_4-\frac{1}{r}\Big(2I^{(2)}_{\bar{u}}(z_0,r)+J_5\Big).
\end{align*}
Since $\bar{V}=V-\lambda=-\|V\|_{L^{\infty}}-\|\nabla V\|_{L^{\infty}}+V\leq-\|\nabla V\|_{L^{\infty}}$, and $|z-z_0|\leq r\leq 1$, there holds
\begin{align*}
2I^{(2)}_{\bar{u}}(z_0,r)+J_5=\int_{B_r(z_0)\cap\widetilde{\Omega}}(2\bar{V}+\nabla V\cdot(z-z_0))\bar{u}^2(r^2-|z-z_0|^2)dz
\leq0.
\end{align*}
Therefore,
\begin{equation}\label{I-fin}
\frac{d}{dr}I_{\bar{u}}(z_0,r)\geq \frac{n+1}{r}I_{\bar{u}}(z_0,r)+\frac{4}{r}J_4.
\end{equation}
Combining (\ref{H-fin}) with (\ref{I-fin}), we conclude,
\begin{align*}
&\frac{d}{dr}N_{\bar{u}}(z_0,r)=\frac{1}{H^2_{\bar{u}}(z_0,r)}\left(\frac{d}{dr}I_{\bar{u}}(z_0,r)H_{\bar{u}}(z_0,r)-\frac{d}{dr}H_{\bar{u}}(z_0,r)I_{\bar{u}}(z_0,r)\right)\nonumber
\\[2mm]
&\quad\geq\frac{1}{H^2_{\bar{u}}(z_0,r)}\left\{\left(\frac{n+1}{r}I_{\bar{u}}(z_0,r)+\frac{4}{r}J_4\right)H_{\bar{u}}(z_0,r)-\left(\frac{n+1}{r}H_{\bar{u}}(z_0,r)+\frac{1}{r}I_{\bar{u}}(z_0,r)\right)I_{\bar{u}}(z_0,r)\right\}\nonumber
\\[2mm]
&\quad=\frac{1}{H^2_{\bar{u}}(z_0,r)}\left(\frac{4}{r}J_4H_{\bar{u}}(z_0,r)-\frac{1}{r}I^2_{\bar{u}}(z_0,r)\right).
\end{align*}
In virtue of the expression of $J_4$,  $H_{\bar{u}}(z_0,r)$ in (\ref{H}) and $I_{\bar{u}}(z_0,r)$ in (\ref{another-I}), we obtain
\begin{align*}
&\frac{4}{r}J_4H_{\bar{u}}(z_0,r)-\frac{1}{r}I^2_{\bar{u}}(z_0,r)
\\[2mm]
&=\frac{4}{r}\int_{B_r(z_0)\cap\widetilde{\Omega}}(\nabla\bar{u}\cdot(z-z_0))^2dz\int_{B_r(z_0)\cap\widetilde{\Omega}}\bar{u}^2dz-\frac{4}{r}\left(\int_{B_r(z_0)}\bar{u}\nabla\bar{u}\cdot(z-z_0)dz\right)^2\geq0,
\end{align*}
thanks to the Cauchy inequality. Therefore, we get
\begin{equation*}
\frac{d}{dr}N_{\bar{u}}(z_0,r)\geq\frac{1}{H^2_{\bar{u}}(z_0,r)}\left(\frac{4}{r}J_4H_{\bar{u}}(z_0,r)-\frac{1}{r}I^2_{\bar{u}}(z_0,r)\right)\geq0.
\end{equation*}
This finishes the proof.
\end{proof}

Since $\partial\Omega$ is compact and $C^{1,\alpha}$,  there exists $\delta>0$ depending only on $n$ and $\Omega$ such that the map $(x,s)\rightarrow x+s\nu(x)$ is one-to-one from $\partial\Omega\times(-\delta,0)$ onto $\delta-$neighborhood of $\partial\Omega$, where $\nu(x)$ is the outer unit normal vector of $x$. This means that
\begin{equation}
\{x+s\nu(x)\ |\ x\in\partial\Omega,s\in(-\delta,0)\}=\{\xi\in\Omega\ |\ dist(\xi,\partial\Omega)<\delta\}.
\end{equation}

Next, we should discuss under what conditions the ball $B_r(x_0)\cap\Omega$ will be star-shaped with respect to $x_0$ in $\Omega$.
\begin{lemma}\label{when is star shaped}
There exist positive constants $r_0<\delta/10$ and $C_0$ depending only on $n$ and $\Omega$, such that for any $r\in(0,r_0)$, if $dist(x_0,\partial\Omega)\geq C_0r^{1+\alpha}$, then $B_r(x_0)\cap\Omega$ is star-shaped with respect to $x_0$.
\end{lemma}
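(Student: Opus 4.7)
The plan is to reduce the star-shapedness of $B_r(x_0)\cap\Omega$ with respect to $x_0$ to the pointwise outer-normal inequality $(\xi-x_0)\cdot\nu(\xi)\ge 0$ for every $\xi\in \partial\Omega\cap \overline{B_r(x_0)}$, and then verify this inequality quantitatively using the $C^2$ structure of $\partial\Omega$. If $\mathrm{dist}(x_0,\partial\Omega)\ge r$, the set is simply the ball $B_r(x_0)$ and the conclusion is trivial, so one may assume $d:=\mathrm{dist}(x_0,\partial\Omega)<r$. The equivalence between star-shapedness with respect to $x_0$ and the outer-normal sign condition at every boundary point inside $\overline{B_r(x_0)}$ is a standard consequence of the segment exit/re-entry dichotomy: any segment from $x_0$ that exits $\Omega$ before reaching an interior target $y$ must re-enter before $t=1$, producing a boundary point at which $(\xi-x_0)\cdot\nu(\xi)<0$, contradicting the assumption. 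On the spherical part of $\partial\bigl(B_r(x_0)\cap\Omega\bigr)$ the sign condition is automatic since the outer normal is $(\xi-x_0)/r$.

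Since $\partial\Omega$ is compact and of class $C^2$, there exist positive constants $\rho_0$ and $M$, depending only on $\Omega$, such that near every boundary point $\partial\Omega$ is a $C^2$ graph over its tangent plane with Hessian bounded by $M$ on a disc of radius $\rho_0$. Let $\xi_0\in\partial\Omega$ be the unique nearest point to $x_0$, so that $x_0=\xi_0-d\,\nu(\xi_0)$, and pass to local coordinates with origin at $\xi_0$ and $-\nu(\xi_0)=e_n$. In this chart $x_0=(0,d)$, and $\partial\Omega$ is the graph $\{(x',\varphi(x')):|x'|<\rho_0\}$ with $\varphi(0)=0$, $\nabla\varphi(0)=0$, $|\nabla^2\varphi|\le M$. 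Choosing $r_0<\rho_0/4$ (and $r_0<\delta/10$) guarantees that every $\xi\in\partial\Omega\cap\overline{B_r(x_0)}$ lies in this chart, since $|\xi-\xi_0|\le|\xi-x_0|+|x_0-\xi_0|\le 2r<\rho_0$.

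A direct computation using $\nu(\xi)=\bigl(\nabla\varphi(x'),-1\bigr)/\sqrt{1+|\nabla\varphi(x')|^2}$ for $\xi=(x',\varphi(x'))$ then gives
\begin{align*}
\nu(\xi)\cdot(\xi-x_0)=\frac{\nabla\varphi(x')\cdot x'-\varphi(x')+d}{\sqrt{1+|\nabla\varphi(x')|^2}}.
\end{align*}
The Taylor bounds $|\nabla\varphi(x')|\le M|x'|$ and $|\varphi(x')|\le \tfrac{M}{2}|x'|^2$ imply $|\nabla\varphi(x')\cdot x'-\varphi(x')|\le \tfrac{3M}{2}|x'|^2\le \tfrac{3M}{2}r^2$, so setting $C_0:=\tfrac{3M}{2}$ yields $\nu(\xi)\cdot(\xi-x_0)\ge 0$ whenever $d\ge C_0 r^2$, as required.

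The only delicate point in the argument is ensuring that a single graph chart of $\partial\Omega$, centered at the nearest boundary point to $x_0$, contains the whole of $\partial\Omega\cap\overline{B_r(x_0)}$; this is what forces $r_0$ to be chosen small in terms of the $C^2$-reach of $\partial\Omega$. Beyond this choice, the proof is essentially a one-line Taylor estimate performed in the chart.
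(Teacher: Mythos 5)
Your proof is correct and follows essentially the same approach as the paper's: fix a graph chart centered at the nearest boundary point $\xi_0$ to $x_0$, align the $n$-th axis with $-\nu(\xi_0)$, and verify the outer-normal sign condition $\nu(\xi)\cdot(\xi-x_0)\ge 0$ for $\xi\in\partial\Omega\cap\overline{B_r(x_0)}$ via a second-order Taylor expansion of the graph function. The only differences are cosmetic — you evaluate $\nu(\xi)\cdot(\xi-x_0)$ exactly from the graph formula, whereas the paper splits $\nu(\xi)$ as $(\nu(\xi)-(0,-1))+(0,-1)$ and estimates each piece — and you are slightly more careful about stating the reduction to the boundary inequality and the one-chart containment.
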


\begin{proof}
Let $a=dist(x_0,\partial\Omega)\geq C_0r^{1+\alpha}$. Firstly we choose a suitable coordinate system such that $x_0=0$, and the domain $B_r(0)\cap\Omega$ can be expressed as
\begin{equation*}
B_r(0)\cap\Omega=\{x=(x',x_n)\in B_r(0)\ |\ x_n>\gamma(x')\},
\end{equation*}
where $x'\in\mathbb{R}^{n-1}$. Moreover, it can also be supposed that $dist(0,(0,-a))=dist(0,\partial\Omega)=a$, and the outer unit normal vector at $(0,-a)$ is $(0,-1)$. So $\gamma(0)=-a$, and $\nabla\gamma(0)=0$. Since $\partial\Omega$ is $C^{1,\alpha}$ continuous, $\gamma$ is $C^{1,\alpha}$ continuous. Thus $|\nabla\gamma(x')|\leq Cr^{\alpha}$ when $|x'|\leq r$. Then for any point $x\in\partial\Omega\cap B_r(0)$, it holds
\begin{align*}
\nu(x)\cdot(x-0)&=(\nu(x)-(0,-1)+(0,-1))\cdot(x',\gamma(x'))\nonumber
\\&=(0,-1)\cdot(x',\gamma(0))+(0,-1)\cdot\left(x',\gamma(x')-\gamma(0)\right)+\left(\nu(x)-(0,-1)\right)\cdot\left(x',\gamma(x')\right)\nonumber
\\&\geq(0,-1)\cdot(x',-a)-|\gamma(x')-\gamma(0)|-|\nu(x)-(0,-1)|r\nonumber
\\&\geq a-Cr^{1+\alpha}\geq C_0r^{1+\alpha}-Cr^{1+\alpha}>0,
\end{align*}
provided that $C_0>0$ is large  depending only on $n$ and $\Omega$. So the result holds for any $r<r_0$ if $r_0\in(0,\delta/10)$ small enough.
\end{proof}

Lemma \ref{monotonicity} and Lemma \ref{when is star shaped} imply the monotonicity of frequency.
\begin{lemma}\label{upper bound of N}
For $r_1<r_2\leq r_0\leq 1$, $z_0=(x_0,0)$ with $x_0\in \Omega$, if $dist(z_0,\partial\widetilde{\Omega})\geq C_0r_0^{1+\alpha}$, there holds
\begin{equation*}
N_{\bar{u}}(z_0,r_1)\leq N_{\bar{u}}(z_0,r_2).
\end{equation*}
\end{lemma}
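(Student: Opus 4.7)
This lemma is a clean corollary of the two preceding ones; the whole argument consists of matching their hypotheses at every scale and then integrating.

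First I will transfer the hypothesis from $\widetilde{\Omega}$ to $\Omega$. Since the lateral face $\partial\Omega\times(-R,R)$ is part of $\partial\widetilde{\Omega}$, one has the elementary inequality $\mathrm{dist}(x_0,\partial\Omega)\geq \mathrm{dist}(z_0,\partial\widetilde{\Omega})\geq C_0 r_0^2\geq C_0 r^2$ for every $r\in(0,r_0]$, where the last step uses $r\leq r_0$ and monotonicity of $r\mapsto r^2$. Lemma \ref{when is star shaped} is therefore applicable at each such $r$ and gives that $B_r(x_0)\cap\Omega$ is star-shaped with respect to $x_0$. The cylinder structure $\widetilde{\Omega}=\Omega\times(-R,R)$ immediately lifts this to the star-shapedness of $B_r(z_0)\cap\widetilde{\Omega}$ with respect to $z_0$: the homothety $z\mapsto z_0+s(z-z_0)$ preserves both the ball $B_r(z_0)$ and, because the first coordinate travels along a segment lying in $B_r(x_0)\cap\Omega$ while the last coordinate shrinks toward $0$, also the cylinder.

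Second, with star-shapedness secured at every $r$ in the open interval $(0,r_0)$, and noting also $r<r_0\leq 1<R$ so that the boundary condition $\bar u|_{\partial\widetilde{\Omega}\cap B_r(z_0)}=0$ holds throughout, Lemma \ref{monotonicity} furnishes $\frac{d}{dr}N_{\bar u}(z_0,r)\geq 0$ on that interval. Integrating this nonnegative derivative from $r_1$ to $r_2$ delivers $N_{\bar u}(z_0,r_1)\leq N_{\bar u}(z_0,r_2)$, which is the claim.

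There is no genuine obstacle to surmount here: the heavy lifting was done in the proof of Lemma \ref{monotonicity}, and the scale-uniform geometric hypothesis built into Lemma \ref{when is star shaped} was precisely engineered to make this integrated form of monotonicity immediate. The only point deserving explicit mention is the bookkeeping observation that the single hypothesis $\mathrm{dist}(z_0,\partial\widetilde{\Omega})\geq C_0 r_0^2$ is in fact strong enough to invoke Lemma \ref{when is star shaped} at \emph{every} $r\leq r_0$ rather than only at $r=r_0$, which is what licenses the pointwise derivative bound on the entire integration range.
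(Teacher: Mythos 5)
Your proof is correct and takes exactly the route the paper intends: the paper states Lemma \ref{upper bound of N} without proof as an immediate consequence of Lemma \ref{monotonicity} together with Lemma \ref{when is star shaped}, and you have simply written out that implication, including the small but worth-noting bookkeeping point that the single hypothesis $\mathrm{dist}(z_0,\partial\widetilde{\Omega})\geq C_0 r_0^2$ licenses the star-shapedness, and hence $\frac{d}{dr}N_{\bar u}(z_0,r)\ge 0$, at every $r\le r_0$ rather than just at $r=r_0$.
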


Based on the monotonicity  of frequency, we can establish  the doubling inequalities in the following Lemma.
\begin{lemma}\label{doubling condition}
For $r_1<r_2\leq r_0\leq1$, $z_0=(x_0,0)$ with $x_0\in \Omega$, if $dist(z_0,\partial\widetilde{\Omega})\geq C_0r_0^{1+\alpha}$, there holds
\begin{align}\label{doubling inequality}
\int_{B_{r_2}(z_0)\cap\widetilde{\Omega}}\bar{u}^2dz\leq \left(\frac{r_2}{r_1}\right)^{N_{\bar{u}}(z_0,r_2)+n+1}\int_{B_{r_1}(z_0)\cap\widetilde{\Omega}}\bar{u}^2dz,
\end{align}
\begin{align}\label{doubling inequality2}
\int_{B_{r_2}(z_0)\cap\widetilde{\Omega}}\bar{u}^2dz\geq \left(\frac{r_2}{r_1}\right)^{N_{\bar{u}}(z_0,r_1)+n+1}\int_{B_{r_1}(z_0)\cap\widetilde{\Omega}}\bar{u}^2dz.
\end{align}
\end{lemma}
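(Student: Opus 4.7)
The plan is to deduce both doubling inequalities from the identity (\ref{H-fin}) by integrating the logarithmic derivative of $H_{\bar u}$ and then applying the monotonicity of $N_{\bar u}$ supplied by Lemma \ref{upper bound of N}. Dividing (\ref{H-fin}) through by $H_{\bar u}(z_0, r)$, which is strictly positive on $(0, r_0]$ by strong unique continuation for $\Delta + \bar V$ with $\bar V \in L^\infty$, I would write
\begin{equation*}
\frac{d}{dr}\log H_{\bar u}(z_0,r) \;=\; \frac{n+1}{r} + \frac{N_{\bar u}(z_0,r)}{r}, \qquad 0 < r \le r_0.
\end{equation*}
Integrating this identity from $r_1$ to $r_2$ yields
\begin{equation*}
\log \frac{H_{\bar u}(z_0, r_2)}{H_{\bar u}(z_0, r_1)} \;=\; (n+1)\log\frac{r_2}{r_1} + \int_{r_1}^{r_2} \frac{N_{\bar u}(z_0, s)}{s}\,ds.
\end{equation*}

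Under the standing hypothesis $\operatorname{dist}(z_0,\partial\widetilde{\Omega}) \ge C_0 r_0^2$, Lemma \ref{when is star shaped} ensures that $B_s(x_0) \cap \Omega$ is star-shaped with respect to $x_0$ for every $s \le r_0$, so Lemma \ref{upper bound of N} is applicable throughout $(0, r_0]$. In particular $N_{\bar u}(z_0, r_1) \le N_{\bar u}(z_0, s) \le N_{\bar u}(z_0, r_2)$ for every $s \in [r_1, r_2]$. Substituting the lower (resp.\ upper) bound for $N_{\bar u}(z_0,s)$ into the integral and carrying out the elementary estimate $\int_{r_1}^{r_2} ds/s = \log(r_2/r_1)$ gives the two-sided bound
\begin{equation*}
(n+1+N_{\bar u}(z_0, r_1))\log\frac{r_2}{r_1} \;\le\; \log \frac{H_{\bar u}(z_0, r_2)}{H_{\bar u}(z_0, r_1)} \;\le\; (n+1+N_{\bar u}(z_0, r_2))\log\frac{r_2}{r_1}.
\end{equation*}
Exponentiating both sides and recalling the definition $H_{\bar u}(z_0, r) = \int_{B_r(z_0)\cap\widetilde\Omega} \bar u^2 \, dz$ from (\ref{H}) produces (\ref{doubling inequality}) and (\ref{doubling inequality2}).

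The only real subtlety in this plan is the positivity of $H_{\bar u}(z_0, \cdot)$ needed to form the logarithm. Since $\bar u$ is a nontrivial solution of the Schrödinger equation (\ref{add one dim}) with $L^\infty$ potential, the strong unique continuation property forbids $\bar u$ from vanishing on any open subset of $\widetilde\Omega$, so $H_{\bar u}(z_0, r) > 0$ for every $r > 0$. Once this is recorded, the remainder of the proof is a direct integration of a first-order ODE combined with the monotonicity already in hand, and no further analytic input is needed.
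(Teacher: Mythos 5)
Your proof is correct and follows the same route as the paper: divide the identity $H'_{\bar u}(z_0,r) = \frac{n+1}{r}H_{\bar u}(z_0,r) + \frac{1}{r}I_{\bar u}(z_0,r)$ by $H_{\bar u}$, integrate the logarithmic derivative from $r_1$ to $r_2$, bound $N_{\bar u}(z_0,s)$ using Lemma \ref{upper bound of N}, and exponentiate. The additional observation about strict positivity of $H_{\bar u}$ (needed for the logarithm) is left implicit in the paper but is a reasonable point to record.
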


\begin{proof}
From the inequality $(\ref{derivative of H})$, it holds
\begin{equation}
\frac{H'_{\bar{u}}(z_0,r)}{H_{\bar{u}}(z_0,r)}=\frac{n+1}{r}+\frac{N_{\bar{u}}(z_0,r)}{r}.
\end{equation}
So from Lemma $\ref{upper bound of N}$, for $0<r_1<r_2<r_0$,
\begin{align*}
\log\frac{H(z_0,r_2)}{H(z_0,r_1)}&=
\int_{r_1}^{r_2}\frac{H'(z_0,r)}{H(z_0,r)}dr=\int_{r_1}^{r_2}\left(\frac{n+1}{r}+\frac{N(z_0,r)}{r}\right)dr\\[2mm]
&\leq\int_{r_1}^{r_2}\frac{n+1+N(z_0,r_2)}{r}dr\\[2mm]
&\leq (n+1+N(z_0,r_2))\log\frac{r_2}{r_1},
\end{align*}
which yields
\begin{equation*}
\frac{H(z_0,r_2)}{H(z_0,r_1)}\leq\left(\frac{r_2}{r_1}\right)^{N(z_0,r_2)+n+1}.
\end{equation*}
This is (\ref{doubling inequality}).

On the other hand, from Lemma $\ref{upper bound of N}$ again, one has
\begin{align*}
\log\frac{H(z_0,r_2)}{H(z_0,r_1)}&=\int_{r_1}^{r_2}\left(\frac{n+1}{r}+\frac{N(z_0,r)}{r}\right)dr
\\[2mm]
&\geq\left(N(z_0,r_1)+n+1\right)\log\frac{r_2}{r_1},
\end{align*}
which yields (\ref{doubling inequality2}).

\end{proof}

The ``changing center'' property is as follows.
\begin{lemma}\label{changing center}
Let $r_0$ be the constant  as in  Lemma \ref{when is star shaped}. If $r\leq r_0$,  and $z_0=(x_0,0)$ with $dist(x_0,\partial\Omega)\geq C_0r^{1+\alpha}$, then for any $z_1\in B_{r/4}(z_0)$ with $z_1=(x_1,0)$ such that $dist(x_1,\partial\Omega)\geq C_0(r-a)^{1+\alpha}$
and $\rho\leq \frac{r}{2}$
it holds
\begin{equation}\label{re}
N(z_1,\rho)\leq \left(1+C\frac{a}{r}\right)N(z_0,r)+C\frac{a}{r},
\end{equation}
where $a=dist(z_0,z_1)$, and $C$ is a positive constant depending only on $n$.
\end{lemma}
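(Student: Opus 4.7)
The plan is to reduce, via the monotonicity of $N_{\bar u}(z_1,\cdot)$, to controlling $N_{\bar u}(z_1, r/2)$, and then to push across to the center $z_0$ by combining the two-sided doubling estimates of Lemma \ref{doubling condition} with elementary ball inclusions. Since $a\le r/4$ and $dist(x_1,\partial\Omega)\ge C_0(r-a)^2$, Lemma \ref{when is star shaped} and Lemma \ref{upper bound of N} guarantee that $N_{\bar u}(z_1,\cdot)$ is nondecreasing on $(0,r-a]$, hence $N_{\bar u}(z_1,\rho)\le N_{\bar u}(z_1,r/2)$ for every $\rho\le r/2$. So it suffices to bound $N_{\bar u}(z_1, r/2)$ in terms of $N_{\bar u}(z_0, r)$.

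To do that I would first apply the lower doubling inequality (\ref{doubling inequality2}) at $z_1$ between the radii $r/2$ and $r-a$,
\[
(N_{\bar u}(z_1,r/2)+n+1)\log\frac{r-a}{r/2} \le \log\frac{H_{\bar u}(z_1,r-a)}{H_{\bar u}(z_1,r/2)},
\]
then convert the $z_1$-centered integrals into $z_0$-centered ones using the triangle-inequality inclusions $B_{r-a}(z_1)\subset B_r(z_0)$ and $B_{r/2-a}(z_0)\subset B_{r/2}(z_1)$, which give
\[
H_{\bar u}(z_1,r-a)\le H_{\bar u}(z_0,r),\qquad H_{\bar u}(z_1,r/2)\ge H_{\bar u}(z_0,r/2-a),
\]
and finally apply the upper doubling inequality (\ref{doubling inequality}) at $z_0$ between $r/2-a$ and $r$ to bound $\log\bigl[H_{\bar u}(z_0,r)/H_{\bar u}(z_0,r/2-a)\bigr]$ from above by $(N_{\bar u}(z_0,r)+n+1)\log(r/(r/2-a))$. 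Chaining the three pieces produces
\[
N_{\bar u}(z_1,r/2)+n+1 \;\le\; (N_{\bar u}(z_0,r)+n+1)\cdot\frac{\log\bigl(r/(r/2-a)\bigr)}{\log\bigl((r-a)/(r/2)\bigr)}.
\]

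The remaining task is to estimate this log-ratio in the parameter $x:=a/r\in(0,1/4]$. The numerator equals $\log 2-\log(1-2x)$ and the denominator equals $\log 2+\log(1-x)$, and on $x\le 1/4$ the elementary bounds $-\log(1-2x)\le 4x$ and $\log(1-x)\ge -2x$ show that the ratio is at most $1+C a/r$ for a numerical constant $C$. After rearrangement this reads
\[
N_{\bar u}(z_1,r/2)\le (1+Ca/r)N_{\bar u}(z_0,r)+Ca/r,
\]
which combined with the monotonicity reduction yields (\ref{re}). The only subtle point is bookkeeping — checking that every radius appearing in the chain lies in the star-shaped regime at its center. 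This is immediate from $r-a\ge 3r/4$, the hypothesis $dist(x_0,\partial\Omega)\ge C_0 r^2$ at $z_0$, and $dist(x_1,\partial\Omega)\ge C_0(r-a)^2$ at $z_1$, so no genuine difficulty is expected.
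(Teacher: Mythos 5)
Your proposal follows essentially the same route as the paper's proof: the reduction of $N(z_1,\rho)$ to $N(z_1,r/2)$ by monotonicity, then the chain consisting of the lower doubling inequality at $z_1$ from $r/2$ to $r-a$, the inclusions $B_{r-a}(z_1)\subset B_r(z_0)$ and $B_{r/2-a}(z_0)\subset B_{r/2}(z_1)$, the upper doubling inequality at $z_0$ from $r/2-a$ to $r$, and an elementary bound on the resulting ratio of logarithms in terms of $a/r$. The bookkeeping on the star-shapedness of the relevant balls is also handled in the same way, so the argument is correct.
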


\begin{proof}
Since $dist(x_0,\partial\Omega)\geq C_0r^2$ and $dist(x_1,\partial\Omega)\geq C_0(r-a)^{1+\alpha}$, Lemma $\ref{when is star shaped}$ yields that $B_r(z_0)\cap\widetilde{\Omega}$ is star-shaped with respect to $z_0$, and
$B_{r-a}(z_1)\cap\widetilde{\Omega}$ is star-shaped with respect to $z_1$.
 Then from Lemma $\ref{doubling condition}$ and the fact that $B_{r-a}(z_1)\subseteq B_r(z_0),\ B_{\frac{r}{2}-a}(z_0)\subseteq B_{r/2}(z_1)$, there holds
\begin{align}\label{estimate of r/2}
N(z_1,r/2)+n+1&\leq \log_2\frac{\int_{B_{r-a}(z_1)\cap\widetilde{\Omega}}\bar{u}^2dz}{\int_{B_{r/2}(z_1)\cap\widetilde{\Omega}}\bar{u}^2dz}
\left(\log_2\frac{r-a}{r/2}\right)^{-1}\nonumber
\\&\leq\left(1+\log_2\left(1-\frac{a}{r}\right)\right)^{-1}\log_2\frac{\int_{B_{r}(z_0)\cap\widetilde{\Omega}}\bar{u}^2dz}{\int_{B_{\frac{r}{2}-a}(z_0)\cap\widetilde{\Omega}}\bar{u}^2dz}\nonumber
\\&\leq \left(1+\log_2\left(1-\frac{a}{r}\right)\right)^{-1}(1+\log_2(1+\frac{2a}{r-2a}))(N(z_0,r)+n+1)\nonumber
\\&\leq\left(1+C\frac{a}{r}\right)(N(z_0,r)+n+1),
\end{align}
where $C$ is a positive constant depending only on $n$.
Then (\ref{re}) comes from $(\ref{estimate of r/2})$ and Lemma $\ref{upper bound of N}$.
\end{proof}


\section{Upper bound of the vanishing order}

In this section, we first introduce the doubling index and  establish the relationship between the doubling index and the frequency function, then show almost monotonicity and the upper bounds of the doubling index.
 Finally, we establish that quantitative relationship between the vanishing order and the Sobolev norm of the potential function.

\begin{definition}
We call the quantity
\begin{equation}\label{M}
M_{\bar{u}}(z_0,r)=\log_2\frac{\|\bar{u}\|^2_{L^{\infty}(B_r(z_0)\cap\widetilde{\Omega})}}{\|\bar{u}\|^2_{L^{\infty}(B_{r/2}(z_0)\cap\widetilde{\Omega})}},
\end{equation}
the doubling index of $\bar{u}$ centered at $z_0$ with radius $r$.
\end{definition}

The relationship between $N_{\bar{u}}(z_0,r)$ and $M_{\bar{u}}(z_0,r)$ is as follows.

\begin{lemma}\label{relationship}
Let $z_0=(x_0,0)$, $x_0\in\Omega$, and
assume that $B_{2r}(z_0)\cap\widetilde{\Omega}$ is star-shaped with respect to $z_0$. Then for any $\eta\in(0,1)$, it holds
\begin{equation}
M_{\bar{u}}(z_0,r)\leq \left(1+\log_2(1+\eta)\right)\cdot N_{\bar{u}}(z_0,(1+\eta)r)+C_1(1-\log_2\eta),
\end{equation}
and
\begin{equation}
M_{\bar{u}}(z_0,r)\geq \left(1-\log_2(1+\eta)\right)\cdot N_{\bar{u}}\left(z_0,\frac{1+\eta}{2}r\right)-C_2(1-\log_2\eta),
\end{equation}
where $C_1$ and $C_2$ are positive constants depending only on $n$.
\end{lemma}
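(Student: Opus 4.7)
The plan is to pass from the $L^\infty$ ratio defining $M_{\bar{u}}(z_0,r)$ to the $L^2$ quantity $H_{\bar{u}}(z_0,\cdot)$ appearing in $N_{\bar{u}}$, and then feed the resulting $H$-ratio into the doubling inequalities (\ref{doubling inequality})--(\ref{doubling inequality2}) of Lemma \ref{doubling condition}. The observation that makes the constants $C_1,C_2$ depend only on $n$ (and not on $\lambda=\|V\|_{W^{1,\infty}}$) is that $\bar{u}^2$ is subharmonic in $\widetilde{\Omega}$:
\[
\Delta(\bar{u}^2)=2|\nabla\bar{u}|^2+2\bar{u}\Delta\bar{u}=2|\nabla\bar{u}|^2-2\bar{V}\bar{u}^2\geq 0,
\]
since $\bar{V}\leq 0$ by the construction in Section~2. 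Moreover, by (\ref{cond}), $\bar{u}$ vanishes on $\partial\widetilde{\Omega}\cap B_{2r}(z_0)$, so extending $\bar{u}$ by zero outside $\widetilde{\Omega}$ preserves subharmonicity of $\bar{u}^2$ on all of $B_{2r}(z_0)$. The classical mean value inequality for nonnegative subharmonic functions then yields, for any $\rho$ and $\eta\in(0,1)$ with $B_{(1+\eta)\rho}(z_0)\subset B_{2r}(z_0)$,
\[
\|\bar{u}\|^{2}_{L^{\infty}(B_{\rho}(z_0)\cap\widetilde{\Omega})}\leq\frac{C(n)}{(\eta\rho)^{n+1}}\,H_{\bar{u}}(z_0,(1+\eta)\rho),
\]
together with the trivial reverse bound $H_{\bar{u}}(z_0,\rho)\leq C(n)\rho^{n+1}\|\bar{u}\|^{2}_{L^{\infty}(B_{\rho}\cap\widetilde{\Omega})}$.

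For the \emph{upper estimate} on $M_{\bar{u}}$, I apply the subharmonic bound at radius $r$ (buffered to $(1+\eta)r$) in the numerator and the trivial volume bound at radius $r/2$ in the denominator, obtaining
\[
\frac{\|\bar{u}\|^{2}_{L^{\infty}(B_r\cap\widetilde{\Omega})}}{\|\bar{u}\|^{2}_{L^{\infty}(B_{r/2}\cap\widetilde{\Omega})}}\leq C(n)\,\eta^{-(n+1)}\,\frac{H_{\bar{u}}(z_0,(1+\eta)r)}{H_{\bar{u}}(z_0,r/2)}.
\]
Now (\ref{doubling inequality}) of Lemma \ref{doubling condition} with $r_1=r/2,\ r_2=(1+\eta)r$ controls the $H$-ratio by $(2(1+\eta))^{N_{\bar{u}}(z_0,(1+\eta)r)+n+1}$. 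Taking $\log_2$ converts the base $2(1+\eta)$ into the prefactor $1+\log_2(1+\eta)$ on $N_{\bar{u}}(z_0,(1+\eta)r)$, while the $\eta^{-(n+1)}$ contributes an additive $-(n+1)\log_2\eta$ and the $+n+1$ in the exponent together with $\log_2(1+\eta)\leq 1$ contribute a constant depending only on $n$. All of these get absorbed into $C_1(1-\log_2\eta)$.

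The \emph{lower estimate} on $M_{\bar{u}}$ is symmetric: the trivial bound at radius $r$ and the subharmonic bound at radius $r/2$ (buffered to $(1+\eta)r/2$) give
\[
\frac{\|\bar{u}\|^{2}_{L^{\infty}(B_r\cap\widetilde{\Omega})}}{\|\bar{u}\|^{2}_{L^{\infty}(B_{r/2}\cap\widetilde{\Omega})}}\geq \frac{\eta^{n+1}}{C(n)}\,\frac{H_{\bar{u}}(z_0,r)}{H_{\bar{u}}(z_0,(1+\eta)r/2)}.
\]
Applying the reverse doubling inequality (\ref{doubling inequality2}) with $r_1=(1+\eta)r/2$ and $r_2=r$ (valid since $\eta<1$) produces an $H$-ratio at least $(2/(1+\eta))^{N_{\bar{u}}(z_0,(1+\eta)r/2)+n+1}$, whose $\log_2$ delivers precisely the factor $1-\log_2(1+\eta)$ multiplying $N_{\bar{u}}(z_0,(1+\eta)r/2)$. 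The $(n+1)\log_2\eta$ from the prefactor is bounded below by $-(n+1)(1-\log_2\eta)$, and the remaining terms are $n$-dependent constants; together they are absorbed into $-C_2(1-\log_2\eta)$.

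The main technical point is the subharmonic-based passage from $L^2$ to $L^\infty$ with a constant depending only on $n$. A naive Moser iteration for $\Delta\bar{u}+\bar{V}\bar{u}=0$ would incur factors like $\|\bar{V}\|_{L^\infty}r^2=O(\lambda r^2)$, which could be large; the sign condition $\bar{V}\leq 0$ engineered via the lift in Section~2, combined with the zero-extension of $\bar{u}$ across $\partial\widetilde{\Omega}$ permitted by (\ref{cond}), is exactly what avoids this. Once this step is in place, the rest is routine bookkeeping between the volume bounds above and Lemma \ref{doubling condition}.
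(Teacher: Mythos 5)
Your proof is correct. The overall skeleton matches the paper's: pass from the $L^{\infty}$ quantities in $M_{\bar{u}}$ to the $L^{2}$ quantity $H_{\bar{u}}$, with buffered radii $(1+\eta)r$ resp.\ $(1+\eta)r/2$, and then feed the resulting $H$-ratios into the two doubling inequalities of Lemma~\ref{doubling condition}; the bookkeeping of the base-$2(1+\eta)$ and $\eta^{\pm(n+1)}$ prefactors is exactly what the paper does, and it is what produces the factors $1\pm\log_2(1+\eta)$ and the additive $C(1-\log_2\eta)$.

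The one place where you diverge is the $L^{\infty}$-to-$L^{2}$ bridge. The paper obtains it from Lemma~\ref{degiorgi}, a De Giorgi iteration for $\bar{u}$ in which the Dirichlet boundary condition and the sign $\bar{V}\le 0$ let one discard the zero-order term in the Caccioppoli estimate. You instead observe that $\Delta(\bar{u}^{2})=2|\nabla\bar{u}|^{2}-2\bar{V}\bar{u}^{2}\ge 0$ because $\bar{V}\le 0$, extend $\bar{u}^{2}$ by zero across $\partial\widetilde{\Omega}\cap B_{2r}(z_0)$ (licit because $\bar{u}^{2}\ge 0$ vanishes continuously there, by (\ref{cond}) and elliptic regularity, so the zero extension keeps the sub-mean-value inequality at boundary points), and invoke the mean value inequality for nonnegative subharmonic functions. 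Both routes lean on the same structural feature — the signed lift that makes $\bar{V}\le 0$ — and both yield constants independent of $\lambda$. Your version is more elementary and makes the $n$-only dependence of $C_1,C_2$ transparent at a glance, whereas the paper's route reuses Lemma~\ref{degiorgi}, which it needs again in Lemmas~\ref{doubling index on boundary}, \ref{interior nodal set} and Theorem~\ref{main result 1}, so nothing extra is built just for this lemma. One small point you should state explicitly: the zero-extension step requires that $B_{2r}(z_0)$ meet only the lateral boundary $\partial\Omega\times(-R,R)$ and not the caps $\overline{\Omega}\times\{\pm R\}$, which holds here since $z_0=(x_0,0)$, $2r<2r_0<1$, and $R>1$ is at our disposal.
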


To prove Lemma \ref{relationship}, we require the following improved $L^{\infty}-L^2$ estimates.
 The core of this improvement lies in the condition $\bar{V}\leq 0$, which yields the estimate  in \eqref{de} with a constant
$C$ independent of $\bar{V}$. This independence is instrumental in deriving the optimal estimates for the doubling index and measure of the nodal sets.

\begin{lemma}\label{degiorgi}
Let $\bar{u}$ be a solution of (\ref{add one dim}),  for $z_0=(x_0,0)$ with $x_0\in\Omega$, then for any $\theta\in(0,1)$, there holds
\begin{equation}\label{de}
\sup\limits_{B_{\theta r}(z_0)\cap\widetilde{\Omega}}|\bar{u}|\leq C((1-\theta)r)^{-(n+1)/2}\|\bar{u}\|_{L^2(B_r(z_0)\cap\widetilde{\Omega})},
\end{equation}
where $C$ is a positive constant depending only on $n$ and $\Omega$.
\end{lemma}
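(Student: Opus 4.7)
The plan is to prove \eqref{de} by a Moser iteration on a family of concentric balls shrinking from $B_{r}(z_0)$ to $B_{\theta r}(z_0)$, adapted to the Dirichlet portion of the boundary. The crucial simplification is that $\bar{u}$ vanishes on $B_{r}(z_0)\cap\partial\widetilde{\Omega}$, so any smooth cutoff $\eta$ compactly supported in the \emph{open} ball $B_{r}(z_0)$ produces a test function $\eta^{2}|\bar{u}|^{2p-2}\bar{u}$ lying in $H^{1}_{0}(B_{r}(z_0)\cap\widetilde{\Omega})$; this lets one invoke the flat-space Sobolev embedding for zero-trace functions on $\eta|\bar{u}|^{p}$ without any correction for the geometry of $\partial\Omega$. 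The zeroth order term is harmless because $\|\bar{V}\|_{L^{\infty}}\leq 2\lambda$.

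Concretely, I would fix radii $r_{k}=\theta r+(1-\theta)r\cdot 2^{-k}$ and standard cutoffs $\eta_{k}$ with $\eta_{k}\equiv 1$ on $B_{r_{k+1}}(z_0)$, $\mathrm{supp}\,\eta_{k}\subset B_{r_{k}}(z_0)$, and $|\nabla\eta_{k}|\leq C\cdot 2^{k}/((1-\theta)r)$. Testing equation \eqref{add one dim} against $\eta_{k}^{2}|\bar{u}|^{2p-2}\bar{u}$, integrating by parts and using Young's inequality to absorb the cross term yields the Caccioppoli estimate
\begin{equation*}
\int \eta_{k}^{2}\bigl|\nabla(|\bar{u}|^{p})\bigr|^{2}\,dz\;\leq\; C\!\left(\tfrac{4^{k}}{(1-\theta)^{2}r^{2}}+\lambda\right)\!\int_{B_{r_{k}}(z_0)\cap\widetilde{\Omega}}|\bar{u}|^{2p}\,dz,
\end{equation*}
and combining this with the Sobolev embedding in dimension $n+1$ of exponent $\chi=(n+1)/(n-1)$ applied to $\eta_{k}|\bar{u}|^{p}$ produces the one-step gain
\begin{equation*}
\|\bar{u}\|_{L^{2p\chi}(B_{r_{k+1}}(z_0)\cap\widetilde{\Omega})}\;\leq\;\bigl[C\bigl(\tfrac{2^{k}}{(1-\theta)r}+\lambda^{1/2}\bigr)\bigr]^{1/p}\|\bar{u}\|_{L^{2p}(B_{r_{k}}(z_0)\cap\widetilde{\Omega})}.
\end{equation*}

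Finally, I would iterate with $p=p_{k}=\chi^{k}$ so that $2p_{k}\chi=2p_{k+1}$, take logarithms, and telescope. Using $r\leq 1$ to bound $\tfrac{2^{k}}{(1-\theta)r}+\lambda^{1/2}\leq C_{\lambda}\tfrac{2^{k}}{(1-\theta)r}$, the two key series are $\sum_{k\geq 0}\chi^{-k}=(n+1)/2$ and $\sum_{k\geq 0}k\chi^{-k}<\infty$, and they yield exactly the exponent $(n+1)/2$ on $(1-\theta)r$ in the final estimate. The bulk of the work is bookkeeping: I would need to check that the factors $p_{k}^{1/p_{k}}$, the constants $C^{1/p_{k}}$, and the $\lambda^{1/2}$ contributions accumulate to a single bounded multiplicative constant (depending on $n$ and on $\lambda$, i.e.\ on $\Omega$ and $V$) rather than producing an extra power of $(1-\theta)r$ that would spoil the scaling. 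This bookkeeping — ensuring the $\lambda$-dependent summand is dominated by $2^{k}/((1-\theta)r)$ uniformly in $k$ and absorbed into the constant $C$ — is the main obstacle; everything else is the classical Moser scheme.
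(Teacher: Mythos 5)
Your Moser-iteration scheme is a reasonable alternative to the paper's De Giorgi argument, but as written it has a genuine gap: you do not exploit the sign $\bar{V}\leq 0$, and as a result your constant depends on $\lambda=\|V\|_{W^{1,\infty}}$, whereas the lemma asserts (and the paper crucially needs) that $C$ depends \emph{only} on $n$ and $\Omega$. You write that ``the zeroth order term is harmless because $\|\bar{V}\|_{L^\infty}\leq 2\lambda$'' and later concede that your final constant depends ``on $n$ and on $\lambda$.'' That concession is exactly what the statement forbids: this lemma is fed into Lemma~\ref{relationship} and ultimately into the bound $M_{\bar u}(z,r)\le C(1+\sqrt\lambda)$ of Lemma~\ref{upper bound of frequency}; a $\lambda$-dependent constant here would contaminate those estimates. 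Moreover your bookkeeping step, bounding $\tfrac{2^k}{(1-\theta)r}+\lambda^{1/2}$ by $C_\lambda\tfrac{2^k}{(1-\theta)r}$ uniformly in $k$, requires $(1-\theta)r\lesssim\lambda^{-1/2}$, which is not assumed.

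The missing idea is precisely the point of the lifting: by construction $\bar V=V-\lambda\leq 0$, so the potential term in the energy identity is \emph{nonpositive} and can simply be dropped. In the paper's De Giorgi argument this appears as
\begin{equation*}
\int_{B_r(z_0)\cap\widetilde{\Omega}}\bar V(\bar v+k)\bar v\,\xi^2\,dz\leq 0,
\end{equation*}
giving a Caccioppoli inequality with no $\bar V$ at all. Your Moser scheme admits the identical fix: testing against $\eta_k^2|\bar u|^{2p-2}\bar u$ produces the term $\int\bar V\eta_k^2|\bar u|^{2p}\,dz\leq 0$, which you should discard rather than bound by $\lambda\int\eta_k^2|\bar u|^{2p}$. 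With that change your Caccioppoli estimate loses the $+\lambda$, the iteration becomes the classical $V$-free one, and the final constant depends only on $n$ and (via the extension-by-zero Sobolev constant) on nothing else, matching the lemma. Aside from this sign issue, your structure (cutoffs in the open ball, zero trace on $\partial\widetilde\Omega$, Sobolev with exponent $\chi=(n+1)/(n-1)$, telescoping with $\sum\chi^{-k}=(n+1)/2$) is sound and parallels the paper's De Giorgi route.
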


\begin{proof}
Let $\bar{v}=(\bar{u}-k)^+$ for $k\geq0$ and $\xi\in C^1_0\left(B_r(z_0)\right)$. Let $\phi=\bar{v}\xi^2$ be the test function. Since $\nabla\bar{v}=\nabla\bar{u}$ in $\{\bar{u}>k\}$, and $\bar{v}=0,\ \nabla\bar{v}=0$ in $\{\bar{u}\leq k\}$. Hence by H\"older inequality, we have
\begin{align*}
\int_{B_r(z_0)\cap\widetilde{\Omega}}\nabla\bar{u}\cdot\nabla\phi dz&=\int_{B_r(z_0)\cap\widetilde{\Omega}}\nabla\bar{u}\cdot\nabla\bar{v}\xi^2dz+\int_{B_r(z_0)\cap\widetilde{\Omega}}2\xi\bar{v}\nabla\bar{v}\cdot\nabla\xi dz
\\&\geq\int_{B_r(z_0)\cap\widetilde{\Omega}}|\nabla\bar{v}|^2\xi^2dz-2\int_{B_r(z_0)\cap\widetilde{\Omega}}|\nabla\bar{v}||\nabla\xi|\bar{v}\xi dz
\\&\geq\frac{1}{2}\int_{B_r(z_0)\cap\widetilde{\Omega}}|\nabla\bar{v}|^2\xi^2dz-2\int_{B_r(z_0)\cap\widetilde{\Omega}}\bar{v}^2|\nabla\xi|^2dz.
\end{align*}
Hence from the equation $\Delta\bar{u}+\bar{V}\bar{u}=0$ and the fact that $\bar{V}\leq 0$, it holds
\begin{align*}
\int_{B_r(z_0)\cap\widetilde{\Omega}}|\nabla\bar{v}|^2\xi^2dz&\leq\int_{B_r(z_0)\cap\widetilde{\Omega}}\nabla\bar{u}\cdot\nabla\phi dz+2\int_{B_r(z_0)\cap\widetilde{\Omega}}\bar{v}^2|\nabla\xi|^2dz
\\&=\int_{B_r(z_0)\cap\widetilde{\Omega}}\bar{V}\bar{u}\phi dz+2\int_{B_r(z_0)\cap\widetilde{\Omega}}\bar{v}^2|\nabla\xi|^2dz
\\&=\int_{B_r(z_0)\cap\widetilde{\Omega}}\bar{V}(\bar{v}+k)\bar{v}\xi^2dz+2\int_{B_r(z_0)\cap\widetilde{\Omega}}\bar{v}^2|\nabla\xi|^2dz
\\&\leq2\int_{B_r(z_0)\cap\widetilde{\Omega}}\bar{v}^2|\nabla\xi|^2dz.
\end{align*}
Recall the Sobolev inequality, $\bar{v}\xi\in W^{1,2}_0(B_r(z_0)\cap\widetilde{\Omega})$, and
\begin{equation*}
\left(\int_{B_r(z_0)\cap\widetilde{\Omega}}(\bar{v}\xi)^{2^*}dz\right)^{\frac{2}{2^*}}\leq C\int_{B_r(z_0)\cap\widetilde{\Omega}}|\nabla(\bar{v}\xi)|^2dz,
\end{equation*}
where $2^*=\frac{2(n+1)}{(n+1)-2}=\frac{2n+2}{n-1}$. Since
\begin{equation*}
\int_{B_r(z_0)\cap\widetilde{\Omega}}|\nabla(\bar{v}\xi)|^2dz\leq 2\left(\int_{B_r(z_0)\cap\widetilde{\Omega}}v^2|\nabla\xi|^2dz+\int_{B_r(z_0)\cap\widetilde{\Omega}}|\nabla\bar{v}|^2\xi^2dz\right)\leq C\int_{B_r(z_0)\cap\widetilde{\Omega}}\bar{v}^2|\nabla\xi|^2dz,
\end{equation*}
we have
\begin{equation*}
\left(\int_{B_r(z_0)\cap\widetilde{\Omega}}(\bar{v}\xi)^{2^*}dz\right)^{\frac{2}{2^*}}\leq C\int_{B_r(z_0)\cap\widetilde{\Omega}}\bar{v}^2|\nabla\xi|^2dz.
\end{equation*}
Using the H\"older inequality again, we obtain
\begin{align}\label{2}
\int_{B_r(z_0)\cap\widetilde{\Omega}}|\bar{v}\xi|^2dz&\leq C\left(\int_{B_r(z_0)\cap\widetilde{\Omega}}(\bar{v}\xi)^{2^*}dz\right)^{\frac{2}{2^*}}\left|\left\{x\in B_r(z_0)\cap\widetilde{\Omega}\ |\ \bar{v}\xi\neq0\right\}\right|^{\frac{2}{n+1}}
\nonumber\\[2mm]
&\leq C\int_{B_r(z_0)\cap\widetilde{\Omega}}\bar{v}^2|\nabla\xi|^2dz\left|\left\{x\in B_r(z_0)\cap\widetilde{\Omega}\ |\ \bar{v}\xi\neq0\right\}\right|^{\frac{2}{n+1}}.
\end{align}
 For any fixed $ 0<\theta<1$,
 choose $\xi\in C_{0}^{1}(B_r(z_0))$ such that in $\xi\equiv1$ $B_{\theta r}(z_0)$ and  $0\leq\xi\leq 1$ and $|\nabla\xi|\leq\frac{2}{(1-\theta)r}$ in $B_r(z_0)$. Let
\begin{equation*}
A(k,r)=\left\{z\in B_r(z_0)\cap\widetilde{\Omega}\ |\ \bar{u}(z)>k\right\},
\end{equation*}
then (\ref{2}) yields
\begin{equation}\label{basic iteration}
\int_{A(k,\theta r)}(\bar{u}-k)^2\leq \frac{C}{(1-\theta)^2r^2}\left|A(k,r)\right|^{\frac{2}{n+1}}\int_{A(k,r)}(\bar{u}-k)^2dz.
\end{equation}
Then, according to the De
Giorgi's iterative process (see e.g. Theorem 4.1 in \cite{hanlin}), we can arrive at the desired
estimate (\ref{de}).
\end{proof}


\textbf{Proof of Lemma $\ref{relationship}$:}
From Lemma $\ref{degiorgi}$, there holds
\begin{equation}\label{3}
\sup\limits_{B_r(z_0)\cap\widetilde{\Omega}}|\bar{u}|\leq C(\eta r)^{-\frac{n+1}{2}}\|\bar{u}\|_{L^2(B_{(1+\eta)r}(z_0)\cap\widetilde{\Omega})}.
\end{equation}
 On the other hand, it is obvious that
\begin{equation}\label{4}
\|\bar{u}\|_{L^2(B_r(z_0)\cap\widetilde{\Omega})}\leq Cr^{\frac{n+1}{2}}\sup\limits_{B_r(z_0)\cap\widetilde{\Omega}}|\bar{u}|.
\end{equation}
Applying the estimates (\ref{3}), (\ref{4}), and using (\ref{doubling inequality}), one has
\begin{align*}
M_{\bar{u}}(z_0,r)&=\log_2\frac{\|\bar{u}\|^2_{L^{\infty}(B_r(z_0)\cap\widetilde{\Omega})}}{\|\bar{u}\|^2_{L^{\infty}(B_{r/2}(z_0)\cap\widetilde{\Omega})}}\nonumber
\\&\leq\log_2\frac{C(\eta r)^{-(n+1)}\|\bar{u}\|^2_{L^2(B_{(1+\eta)r}(z_0)\cap\widetilde{\Omega})}}{C^{-1}(r/2)^{-(n+1)}\|\bar{u}\|^2_{L^2(B_{r/2}(z_0)\cap\widetilde{\Omega})}}\nonumber
\\&\leq\log_2\left(C\eta^{-(n+1)}\frac{\|\bar{u}\|^2_{L^2(B_{(1+\eta)r}(z_0)\cap\widetilde{\Omega})}}{\|\bar{u}\|^2_{L^2(B_{r/2}(z_0)\cap\widetilde{\Omega})}}\right)\nonumber
\\&\leq C-C\log_2\eta+\log_2\left(\frac{(1+\eta)r}{r/2}\right)^{N_{\bar{u}}(z_0,(1+\eta)r)+n+1}\nonumber
\\&\leq(1+\log_2(1+\eta))\cdot N_{\bar{u}}\left(z_0,(1+\eta)r\right)+C(1-\log_2\eta).
\end{align*}

On the other hand, by using (\ref{3}), (\ref{4}) again, and (\ref{doubling inequality2}), we get
\begin{align*}
M_{\bar{u}}(z_0,r)&=\log_2\frac{\|\bar{u}\|^2_{L^{\infty}(B_r(z_0)\cap\widetilde{\Omega})}}{\|\bar{u}\|^2_{L^{\infty}(B_{r/2}(z_0)\cap\widetilde{\Omega})}}\nonumber
\\&\geq\log_2\frac{C^{-1} r^{-(n+1)}\|\bar{u}\|^2_{L^2(B_{r}(z_0)\cap\widetilde{\Omega})}}
{C(\eta r/2)^{-(n+1)}\|\bar{u}\|^2_{L^2(B_{(1+\eta)r/2}(z_0)\cap\widetilde{\Omega})}}\nonumber
\\&\geq\log_2\left(C^{-1}\eta^{n+1}\frac{\|\bar{u}\|^2_{L^2(B_{r}(z_0)\cap\widetilde{\Omega})}}{\|\bar{u}\|^2_{L^2(B_{(1+\eta)r/2}(z_0)\cap\widetilde{\Omega})}}\right)\nonumber
\\&\geq C\log_2\eta-C+\log_2\left(\frac{r}{(1+\eta)r/2}\right)^{N_{\bar{u}}(z_0,(1+\eta)r/2)+n+1}\nonumber
\\&\geq (1-\log_2(1+\eta))\cdot N_{\bar{u}}\left(z_0,\frac{(1+\eta)r}{2}\right)+C(1-\log_2(1+\eta))+C(\log_2\eta-1)\\
&\geq (1-\log_2(1+\eta))\cdot N_{\bar{u}}\left(z_0,\frac{(1+\eta)r}{2}\right)-C(1-\log_2\eta).
\end{align*}
This completes the proof of Lemma \ref{relationship}.
\qed

 The doubling index is almost
monotonic in the following sense.
\begin{lemma}\label{doubling index on boundary}
For any point $z_0=(x_0,0)$ with $x_0\in \Omega$ and $r<\min\{r_0, \frac{1}{8C_0}\}$,  where $r_0$, $C_0$ are constants as in  Lemma \ref{when is star shaped},
 there holds
\begin{equation}
M_{\bar{u}}(z_0,r)\leq CM_{\bar{u}}(z_0,r_0)+C,
\end{equation}
where $C$ is a positive constant depending only on $n$ and $\Omega$.
\end{lemma}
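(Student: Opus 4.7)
My plan is to sandwich $M_{\bar u}(z_0,r)$ between frequency values $N_{\bar u}$ via Lemma \ref{relationship}, slide up in scale via the monotonicity of $N_{\bar u}$ from Lemma \ref{upper bound of N}, and then unwind back to the doubling index at scale $r_0$ via the other direction of Lemma \ref{relationship}. Fix a reference value $\eta\in(0,1)$ with $1-\log_2(1+\eta)>0$, say $\eta=\tfrac12$. First, the upper half of Lemma \ref{relationship} applied at scale $r$ gives
\begin{equation*}
M_{\bar u}(z_0,r)\le \bigl(1+\log_2(1+\eta)\bigr)\,N_{\bar u}\bigl(z_0,(1+\eta)r\bigr)+C.
\end{equation*}
Next, since $(1+\eta)r\le \tfrac{1+\eta}{2}r_0$ once $r\le r_0/2$, the monotonicity in Lemma \ref{upper bound of N} yields
\begin{equation*}
N_{\bar u}\bigl(z_0,(1+\eta)r\bigr)\le N_{\bar u}\bigl(z_0,\tfrac{1+\eta}{2}r_0\bigr).
\end{equation*}
Finally, the lower half of Lemma \ref{relationship} at scale $r_0$ reads
\begin{equation*}
M_{\bar u}(z_0,r_0)\ge \bigl(1-\log_2(1+\eta)\bigr)\,N_{\bar u}\bigl(z_0,\tfrac{1+\eta}{2}r_0\bigr)-C,
\end{equation*}
and solving for the frequency and chaining the three estimates produces the claim with multiplicative constant $\bigl(1+\log_2(1+\eta)\bigr)/\bigl(1-\log_2(1+\eta)\bigr)$.

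The main obstacle is respecting all of the star-shaped hypotheses buried inside Lemmas \ref{relationship} and \ref{upper bound of N}. By Lemma \ref{when is star shaped} one has star-shapedness of $B_\rho(x_0)\cap\Omega$ only for $\rho<r_0$ and under $\mathrm{dist}(x_0,\partial\Omega)\ge C_0\rho^2$. The fine-scale version of this condition is exactly what the hypothesis $r<1/(8C_0)$ takes care of, since it forces $C_0((1+\eta)r)^2\le r/8$. The delicate point is the coarse-scale condition $\mathrm{dist}(z_0,\partial\widetilde{\Omega})\ge C_0r_0^2$, which is a fixed positive separation and can fail for $x_0$ close to $\partial\Omega$. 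For such near-boundary points I would use the changing-center Lemma \ref{changing center}: translate $z_0$ slightly inward along the inner normal to a point $z_0'$ with $\mathrm{dist}(z_0',\partial\widetilde{\Omega})\ge C_0r_0^2$, run the three-step argument at $z_0'$, and return to $z_0$ at the cost of a bounded factor absorbed into $C$. For $r$ in the intermediate range $r_0/2\le r<r_0$, where the monotonicity slide cannot be applied directly, the scales $r$ and $r_0$ differ by only a bounded ratio and the bound follows from a single application of the doubling estimates in Lemma \ref{doubling condition}. Threading these three regimes together is the subtle part of the proof; the interior small-$r$ case is simply a chain of the preceding lemmas.
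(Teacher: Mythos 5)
Your chain "$M_{\bar u}(z_0,r)\to N_{\bar u}\to N_{\bar u}$ at scale $\sim r_0\to M_{\bar u}(z_0,r_0)$" is the natural thing to try, and it works when $z_0$ is far enough from the boundary (say $\mathrm{dist}(x_0,\partial\Omega)\ge C_0 r_0^2$). But near the boundary your proposed fix has a genuine gap. Lemma~\ref{upper bound of N} needs the ball $B_{r_0}(z_0)\cap\widetilde\Omega$ to be star-shaped, which forces $\mathrm{dist}(x_0,\partial\Omega)\gtrsim C_0 r_0^2$, a \emph{fixed} positive separation. To repair this you suggest shifting $z_0$ inward to some $z_0'$ with that separation and invoking the changing-center Lemma~\ref{changing center}. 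This fails for two reasons. First, that lemma only gives a bound in one direction, $N(z_1,\rho)\le(1+Ca/r)N(z_0,r)+Ca/r$, and its hypotheses ($z_1\in B_{r/4}(z_0)$, $\rho\le r/2$, and star-shapedness \emph{at $z_0$ at scale $r$}) already presuppose the distance-to-boundary you are trying to bypass. Second, and more seriously, when $x_0$ is genuinely close to $\partial\Omega$ the shift distance $a$ must be on the order of $C_0 r_0^2$, which is not small relative to $r$ when $r$ is small; the error factor $(1+Ca/r)$ blows up rather than being "absorbed into $C$."

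The paper sidesteps this by never jumping directly from scale $r$ to scale $r_0$. It proves a dyadic, scale-local claim
\[
\log_2\frac{H_{\bar u}(z_0,r)}{H_{\bar u}(z_0,r/2)}\le (1+Cr)\,\log_2\frac{H_{\bar u}(z_0,2r)}{H_{\bar u}(z_0,r)},
\]
and in the near-boundary case proves it by shifting the center only by $a=4C_0 r^2$, i.e.\ proportional to $r^2$. At scale $\rho=2^l r$ the shift distance $\rho^2$ is always small compared to $\rho$, so each step costs only a factor $(1+C\cdot2^l r)$, and the product $\prod_{l=0}^{k-1}(1+C2^l r)\le e^{C r_0}$ stays bounded when telescoping up to scale $r_0$. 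The moral is that near the boundary one must take many tiny change-of-center steps whose sizes shrink quadratically with the scale, rather than one step of fixed size; your outline misses this mechanism, and without it the near-boundary case of the lemma does not close.
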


\begin{proof}
Without loss of generality, let $z_0=(0,0)$,  we claim that
\begin{equation}\label{claim before induction}
\log_2\frac{H_{\bar{u}}(0,r)}{H_{\bar{u}}(0,r/2)}\leq (1+Cr)\log_2\frac{H_{\bar{u}}(0,2r)}{H_{\bar{u}}(0,r)}.
\end{equation}

 If $dist(z_0,\partial\widetilde{\Omega})\geq C_0(2r)^{1+\alpha}$, by using Lemma \ref{doubling condition} directly, one has
\begin{equation*}
\log_2\frac{H_{\bar{u}}(z_0,r)}{H_{\bar{u}}(z_0,r/2)}\leq \log_2\frac{H_{\bar{u}}(z_0,2r)}{H_{\bar{u}}(z_0,r)},
\end{equation*}
which implies (\ref{claim before induction}).

 If $dist(z_0,\partial\widetilde{\Omega})=dist(z_0,z^*)<C_0(2r)^{1+\alpha}$, and $\nu(z^*)=(0,-1,0)\in\mathbb{R}^{n-1}\times\mathbb{R}\times\mathbb{R}$.
Let $a=4C_0r^{1+\alpha}<\frac{r}{2}$ and $z_1=(0,a,0)$, so $dist(z_1,\partial\widetilde{\Omega})\geq a=C_0(2r)^{1+\alpha}$, then from Lemma $\ref{when is star shaped}$, $B_{2r}(z_1)\cap\widetilde{\Omega}$ is star-shaped with respect  to $z_1$. Note that $B_{\frac{r}{2}-a}(z_1)\subseteq B_{r/2}(0)$, $B_{r}(0)\subseteq B_{r+a}(z_1)$, and $B_{2r-a}(z_1)\subseteq B_{2r}(0)$, there holds
 \begin{equation*}
\log_2\frac{H_{\bar{u}}(0,r)}{H_{\bar{u}}(0,r/2)}\leq \log_2\frac{H_{\bar{u}}(z_1,r+a)}{H_{\bar{u}}(z_1,\frac{r}{2}-a)},
\end{equation*}
and
\begin{equation*}
\log_2\frac{H_{\bar{u}}(0,2r)}{H_{\bar{u}}(0,r)}\geq \log_2\frac{H_{\bar{u}}(z_1,2r-a)}{H_{\bar{u}}(z_1,r+a)}.
\end{equation*}
Then from Lemma $\ref{doubling condition}$, and the fact that $B_{2r-a}(z_1)\cap\widetilde{\Omega}$ is star-shaped, one has
\begin{equation*}
\log_2\frac{H_{\bar{u}}(z_1,r+a)}{H_{\bar{u}}(z_1,\frac{r}{2}-a)}\leq \left(1+\log_2\frac{r+a}{r-2a}\right)(N_{\bar{u}}(z_1,r+a)+n+1),
\end{equation*}
and
\begin{equation*}
\log_2\frac{H_{\bar{u}}(z_1,2r-a)}{H_{\bar{u}}(z_1,r+a)}\geq \left(1+\log_2\frac{2r-a}{2r+2a}\right)(N_{\bar{u}}(z_1,r+a)+n+1).
\end{equation*}
Therefore, we obtain
\begin{align*}
\log_2\frac{H_{\bar{u}}(0,r)}{H_{\bar{u}}(0,r/2)}&\leq\left(1+\log_2\frac{r+a}{r-2a}\right)\cdot\left(1+\log_2\frac{2r-a}{2r+2a}\right)^{-1}\log_2\frac{H_{\bar{u}}(0,2r)}{H_{\bar{u}}(0,r)}\nonumber
\\[2mm]&\leq\left(1+C\frac{a}{r}\right)\log_2\frac{H_{\bar{u}}(0,2r)}{H_{\bar{u}}(0,r)}\nonumber
\\[2mm]&\leq(1+C r^{\alpha})\log_2\frac{H_{\bar{u}}(0,2r)}{H_{\bar{u}}(0,r)},
\end{align*}
where we have used $a=4C_0r^{1+\alpha}$.
Thus the claim $(\ref{claim before induction})$ is proved.

Let $k$ be the integer such that $\frac{r_0}{2}<2^kr\leq r_0$,  by using $(\ref{claim before induction})$ for $k$ times, one gets
\begin{align}
\log_2\frac{H_{\bar{u}}(0,r)}{H_{\bar{u}}(0,r/2)}&\leq \Pi_{l=0}^{k-1}(1+C2^lr^{\alpha})\log_2\frac{H_{\bar{u}}(0,2^{k}r^{\alpha})}{H_{\bar{u}}(0,2^{k-1}r)}\nonumber
\\[2mm]
&\leq \Pi_{l=0}^{k-1}(1+C2^lr^{\alpha})\log_2\frac{H_{\bar{u}}(0,r_0)}{H_{\bar{u}}(0,r_0/4)}\nonumber
\\[2mm]&\leq \Pi_{l=0}^{k-1}e^{C2^{l-k}r_0^{\alpha}}\left(\log_2\frac{H_{\bar{u}}(0,r_0)}{H_{\bar{u}}(0,r_0/2)}
+\log_2\frac{H_{\bar{u}}(0,r_0/2)}{H_{\bar{u}}(0,r_0/4)}\right)\nonumber
\\[2mm]
&\leq C\log_2\frac{H_{\bar{u}}(0,r_0)}{H_{\bar{u}}(0,r_0/2)}.
\end{align}
Then from Lemma $\ref{degiorgi}$,
\begin{align*}
M_{\bar{u}}(0,r)&= \log_2\frac{\|\bar{u}\|^2_{L^{\infty}(B_r(0)\cap\widetilde{\Omega})}}{\|\bar{u}\|^2_{L^{\infty}(B_{r/2}(0)\cap\widetilde{\Omega})}}\leq \log_2\frac{H_{\bar{u}}(0,2r)}{H_{\bar{u}}(0,r/2)}\nonumber
\\[2mm]&\leq\log_2\frac{H_{\bar{u}}(0,2r)}{H_{\bar{u}}(0,r)}+\log_2\frac{H_{\bar{u}}(0,r)}{H_{\bar{u}}(0,r/2)}\nonumber
\\[2mm]&\leq2e^{Cr^{\alpha}}\log_2\frac{H_{\bar{u}}(0,r_0)}{H_{\bar{u}}(0,r_0/2)}\nonumber
\\[2mm]&\leq2e^{Cr^{\alpha}}(M_{\bar{u}}(0,r_0)+M_{\bar{u}}(0,r_0/2))+C.
\end{align*}
By a similar argument, it is easy to check that
\begin{equation*}
M_{\bar{u}}(0,r_0/2)\leq CM_{\bar{u}}(0,r_0)+C.
\end{equation*}
This finishes the proof.
\end{proof}

The upper bound estimate of $M_{\bar{u}}(z,r)$ is as follows.
\begin{lemma}\label{upper bound of frequency}
Let $u$ be a solution of (\ref{basic equation}).  Assume that  $V\in W^{1,\infty}(\Omega)$ and  denote $\lambda=\|V\|_{W^{1,\infty}}$, then for any $z=(x,0)$ with $x\in\Omega$
\begin{align*}
M_{\bar{u}}(z,r)\leq C\left(1+\sqrt{\lambda}\right),\quad r\leq r_0,
\end{align*}
where $C$ is a positive constant depending only on $n$ and $\Omega$.
\end{lemma}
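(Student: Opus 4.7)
The plan is to combine the almost monotonicity of the doubling index (Lemma 2.6) with the explicit product form of the lift $\bar{u}(x,t) = u(x) e^{\sqrt{\lambda} t}$ to reduce the $(n+1)$-dimensional doubling index of $\bar{u}$ to the $n$-dimensional doubling index of $u$ at a single fixed scale, and then invoke the classical Schrödinger doubling bound.

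First, by Lemma 2.6, for $r \leq \min\{r_0, 1/(8C_0)\}$ and any $z = (x,0)$ with $x \in \Omega$, one has
\[
M_{\bar{u}}(z, r) \leq C\, M_{\bar{u}}(z, r_0) + C,
\]
while the intermediate range $r \in [\min\{r_0, 1/(8C_0)\}, r_0]$ is handled by a direct comparison. So it suffices to prove $M_{\bar{u}}(z, r_0) \leq C(1 + \sqrt{\lambda})$.

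Next, I exploit the product form of $\bar{u}$, which is the entire reason for passing to the lift. Since $\bar{u}(y,0) = u(y)$ on the slice $\{t = 0\}$,
\[
\|\bar{u}\|_{L^\infty(B_{r_0/2}(z) \cap \widetilde{\Omega})} \geq \|u\|_{L^\infty(B_{r_0/2}^{(n)}(x) \cap \Omega)},
\]
where $B_r^{(n)}(x)$ is the $n$-dimensional ball. Conversely, for any $(y,s) \in B_{r_0}(z) \cap \widetilde{\Omega}$ one has $|s| < r_0$, so $|\bar{u}(y,s)| \leq e^{\sqrt{\lambda} r_0} |u(y)|$. Taking suprema and logarithms yields
\[
M_{\bar{u}}(z, r_0) \leq \frac{2\sqrt{\lambda}\, r_0}{\ln 2} + M_u^{(n)}(x, r_0),
\]
where $M_u^{(n)}(x, r_0) = \log_2 \bigl(\|u\|_{L^\infty(B_{r_0}^{(n)}(x)\cap\Omega)}^2 / \|u\|_{L^\infty(B_{r_0/2}^{(n)}(x)\cap\Omega)}^2\bigr)$. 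The exponential contribution is already $O(\sqrt{\lambda})$, so the problem reduces to the $n$-dimensional doubling bound $M_u^{(n)}(x, r_0) \leq C(1 + \sqrt{\lambda})$.

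This last bound is the classical Kukavica/Donnelly--Fefferman doubling estimate for Schrödinger equations with bounded potential. I would obtain it by rescaling: set $\tilde{u}(y) = u(x + y/\sqrt{\lambda})$, which satisfies $\Delta_y \tilde{u} + \tilde{V} \tilde{u} = 0$ with $\|\tilde{V}\|_{L^\infty} \leq 1$; the doubling index of such $\tilde{u}$ at scale $R = r_0 \sqrt{\lambda}$ is bounded by $C(1 + R)$ via an Almgren-type frequency monotonicity in $n$ dimensions (the analogue of Lemma 2.2). Translating back to $u$ at scale $r_0$ gives the desired bound. The main obstacle is precisely this step: ensuring the Schrödinger doubling estimate with the correct $\sqrt{\lambda}$-scaling without circularity, since a naive loop through Lemmas 2.4--2.6 applied to $\bar{u}$ only produces estimates involving $\bar{u}$ itself. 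The cleanest resolution is to run the $n$-dimensional version of the frequency-monotonicity machinery of Section 2 directly for $u$ in $\Omega \subset \mathbb{R}^n$, following the same lines but one dimension lower, so that the needed bound at the rescaled unit scale becomes a self-contained byproduct of the same techniques.
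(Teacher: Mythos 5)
Your reduction of $M_{\bar u}(z,r_0)$ to the $n$-dimensional doubling index $M_u^{(n)}(x,r_0)$ via the exponential factor is correct and is, in a local form, the first ingredient of the paper's argument. However, the proof stops at the point where real work is needed: the bound $M_u^{(n)}(x,r_0)\leq C(1+\sqrt{\lambda})$ for \emph{every} $x\in\Omega$ (including $x$ near $\partial\Omega$) is not an off-the-shelf classical fact, and your proposed route to it does not close the gap. Running the Section~2 frequency machinery ``one dimension lower, directly for $u$'' is exactly what the lift was introduced to avoid. The monotonicity proof of Lemma~\ref{monotonicity} hinges on the pointwise inequalities $\bar V\leq 0$ and $\bar V+|\nabla\bar V|\leq 0$, which are manufactured by setting $\bar V=V-\lambda$; for the original $V$ (which is not sign-definite) the combination $2I^{(2)}+J_5$ no longer has a sign, and the clean inequality $N'\geq 0$ fails. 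Without that sign, one is back to the general Kukavica/Bakri/Zhu framework, which is substantially harder and which the paper deliberately bypasses. Moreover, even if almost-monotonicity were available, monotonicity by itself produces no numerical bound on the doubling index: one needs an anchor at some large scale where the doubling index is a priori controlled. Near the Dirichlet boundary such an anchor is not automatic.

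The paper supplies precisely this missing anchor by a different mechanism. It chooses $x_0$ to be a global maximum point of $|u|$ in $\overline\Omega$; then $M_u^{(n)}(x_0,r)=0$ trivially for all $r$, so the exponential factor alone gives $M_{\bar u}(z_0,r)\leq C\sqrt{\lambda}$ at $z_0=(x_0,0)$. From this single anchor the bound is propagated to every other slice point $z_1=(x_1,0)$ by a finite chain: nested balls plus the almost-monotonicity of $M_{\bar u}$ (Lemma~\ref{doubling index on boundary}) carry the $C\sqrt{\lambda}$ bound a definite distance at each step, and a bounded number of steps (depending only on $\mathrm{diam}(\Omega)$) covers all of $\Omega$. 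This chaining from the global maximum is the key idea your proposal is missing; without it, the argument is circular, since bounding $M_u^{(n)}(x,r_0)$ at an arbitrary $x$ is essentially the statement being proved.
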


\begin{proof}
Since $V\in W^{1,\infty}(\Omega)$, from the standard De Giorgi estimate, $u\in L^{\infty}(\Omega)$. Let $x_0$ be the maximum point of $u$ in the closure of $\Omega$. Without loss of generality, assume $|u(x_0)|=1$.
Then from the fact that $\bar{u}(x,t)=e^{\sqrt{\lambda}t}u(x)$, it is easy to see that
\begin{equation}\label{MM}
M_{\bar{u}}(z_0,r)\leq C\sqrt{\lambda}\quad \quad z_0=(x_0,0), \quad r\leq r_0.
\end{equation}
For any $z_1\in B_{r_0/4}(z_0)\cap\widetilde{\Omega}$ such that $z_1=(x_1,0)$, noting that $B_{r_0/4}(z_0)\subset B_{r_0/2}(z_1)$, using the definition of $M_{\bar{u}}(z_0,r_0)$ and  (\ref{MM}), one has
\begin{align*}
\|\bar{u}\|^2_{L^{\infty}\left(B_{r_0/2}(z_1)\cap\widetilde{\Omega}\right)} &\geq \|\bar{u}\|_{L^{\infty}\left(B_{r_0/4}(z_0)\cap\widetilde{\Omega}\right)}^2\nonumber\\
&\geq 2^{-M_{\bar{u}}(z_0,r_0/2)}\|\bar{u}\|_{L^{\infty}\left(B_{r_0/2}(z_0)\cap\widetilde{\Omega}\right)}^2\nonumber\\[2mm]
&\geq 2^{-M_{\bar{u}}(z_0,r_0/2)}2^{-M_{\bar{u}}(z_0,r_0)}\|\bar{u}\|_{L^{\infty}\left(B_{r_0}(z_0)\cap\widetilde{\Omega}\right)}^2\nonumber\\[2mm]
&\geq2^{-C\sqrt{\lambda}}.
\end{align*}
Then
\begin{equation}
M_{\bar{u}}(z_1,r_0)=\log_2\frac{\|\bar{u}\|^2_{L^{\infty}(B_{r_0}(z_1)\cap\widetilde{\Omega})}}{\|\bar{u}\|^2_{L^{\infty}(B_{r_0/2}(z_1)\cap\widetilde{\Omega})}}\leq \log_2\frac{e^{C\sqrt{\lambda}}}{2^{-C\sqrt{\lambda}}}\leq C\sqrt{\lambda}.
\end{equation}
Furthermore, using Lemma \ref{doubling index on boundary}, one has
\begin{equation}
M_{\bar{u}}(z_1,r)\leq C\sqrt{\lambda}+C, \quad  r\leq r_0.
\end{equation}
Repeat this argument for $k$ times, where $k$ is a positive integer depending only on $diam(\Omega)$, one has for any $z\in\widetilde{\Omega}$ with $z=(x,0)$,
\begin{equation*}
M_{\bar{u}}(z,r)\leq C\sqrt{\lambda}+C,\quad r\leq r_0.
\end{equation*}
This finishes the proof.
\end{proof}

At the end of this section, as a by-product, we will show   a result about the quantitative unique continuation
properties of solutions to (\ref{basic equation}). Firstly, we briefly recall some definitions.
A function $u\in L^2$ is said to have vanishing order $k\geq 0$ at some point $x_0\in\mathbb{R}^{n}$ if
\begin{align}\label{def}
 \frac{\fint_{B_{r}(x_0)}u^{2}dx}{r^{2k}}=O(1),
\end{align}
and  vanish to infinite order at point $x_0$ if
\begin{align*}
\fint_{B_r(x_0)}u^2=o(r^{2k})
\quad \mbox{for any integer} \quad k.
\end{align*}
A differential operator $L$ is said to have the strong unique continuation property in a connected domain $\Omega$ if the only solution of $Lu=0$
which vanishes to infinite order at a point $x_0\in\Omega$ is $u\equiv0$. If the strong unique continuation property holds for $L$, that means the nontrivial solutions of $Lu=0$ do not vanish of infinite order.
It is interesting to characterize the vanishing orders  of solutions by the coefficient functions appeared in the equations $Lu=0$, which is called the quantitative unique continuation property.

There is a large amount of work on strong unique continuation  for second order elliptic operators (cf. \cite{c1939, cg1999, DLW2021, h2001,G-L1987, jk1985} and references therein). For  the quantitative unique continuation property of second order elliptic operators,  Kukavica  \cite{K1998}
showed  that the vanishing order of solutions to the Schr\"{o}dinger equation:
\begin{equation*}\label{S}
-\Delta u=V(x)u.
\end{equation*}
 is less than
$C\Big(1+\|V^{-}\|_{L^{\infty}}^{\frac{1}{2}}+\mbox{osc}V+\|\nabla V\|_{L^{\infty}}\Big)$ provided $V\in W^{1,\infty}$ by using the frequency function argument.
 However, this upper bound is not sharp compared to the case  $V(x)=\lambda$.
Bakri \cite{B2012} improved the  upper bounded of vanishing order to $C\left(1+\|V\|^{\frac{1}{2}}_{C^{1}}\right)$ and Zhu \cite{zhu2016} improved to $C\left(1+\|V\|^{\frac{1}{2}}_{W^{1,\infty}}\right)$.
On the other hand, if $V(x)\in L^{\infty}$, Bourgain and Kenig \cite{BK}
showed that the  upper bounded of vanishing order is not large than $C\|V\|^{\frac{2}{3}}_{L^{\infty}}$. Moreover, Kenig \cite{K} also pointed out that the exponent $\frac{2}{3}$ of $\|V\|_{L^{\infty}}$ is sharp for complex valued $V(x)$ thanks to Meshkov's example \cite{M}.
 In \cite{K}  Kenig  asked if the vanishing order could be improved to $\|V\|^{\frac{1}{2}}_{L^{\infty}}$, matching that of the eigenfunctions, in the real-valued setting.

 We would like to point out that the results mentioned above are interior, the following quantitative continuation property is about the boundary value problem (\ref{basic equation}) parallel to them.

\begin{theorem}\label{main result 1}
Assume $\Omega\subset\mathbb{R}^{n}$ is a bounded domain with $C^{1,\alpha}$ boundary,
and $V\in W^{1,\infty}(\Omega)$.
Let $u$ be a non-trivial solution of (\ref{basic equation}).
Then the vanishing order of $u$ in $\Omega$ is  not large
than $C\left(\|V\|^{\frac{1}{2}}_{W^{1,\infty}}+1\right)$, where $C$ is a positive constant depending on $n$ and $\Omega$.
\end{theorem}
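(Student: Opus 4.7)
The plan is to extract the vanishing-order bound directly from the doubling-index estimate $M_{\bar{u}}(z_0,r)\leq C(1+\sqrt{\lambda})$ supplied by Lemma \ref{upper bound of frequency}, via iteration of $M_{\bar{u}}$ at dyadic scales, a De Giorgi conversion from $L^\infty$ to $L^2$, and a short translation from the lifted function $\bar{u}$ back to $u$.

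Fix an arbitrary point $x_0\in\Omega$, set $z_0=(x_0,0)$, $\lambda=\|V\|_{W^{1,\infty}}$, and denote by $A:=C(1+\sqrt{\lambda})$ the constant from Lemma \ref{upper bound of frequency}, so that $M_{\bar{u}}(z_0,r)\leq A$ for every $r\leq r_0$. Unrolling the definition of $M_{\bar{u}}$ at the dyadic scales $\{2^{-j}r_0\}_{j\geq 0}$ and interpolating via the monotonicity of $\|\bar{u}\|_{L^\infty(B_r\cap\widetilde{\Omega})}^2$ in $r$ yields
\begin{equation*}
\|\bar{u}\|^2_{L^\infty(B_r(z_0)\cap\widetilde{\Omega})}\;\geq\;c_0\left(\frac{r}{r_0}\right)^A\quad\text{for every }r\leq r_0,
\end{equation*}
where $c_0>0$, since nontriviality of $u$ together with unique continuation for (\ref{add one dim}) forces $\bar{u}\not\equiv 0$ on $B_{r_0}(z_0)\cap\widetilde{\Omega}$.

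Next, applying Lemma \ref{degiorgi} with $\theta=1/2$, namely $\|\bar{u}\|^2_{L^\infty(B_{r/2})}\leq Cr^{-(n+1)}\|\bar{u}\|^2_{L^2(B_r)}$, converts the pointwise lower bound into the $L^2$ lower bound
\begin{equation*}
\|\bar{u}\|^2_{L^2(B_r(z_0)\cap\widetilde{\Omega})}\;\geq\;c_1\,r^{n+1+A}.
\end{equation*}
Since $\bar{u}(x,t)=u(x)e^{\sqrt{\lambda}\,t}$ and $|t|\leq r$ on $B_r(z_0)$, for $r\leq 1/\sqrt{\lambda}$ the exponential factor is bounded above by a universal constant, while Fubini gives
\begin{equation*}
\int_{B_r(z_0)\cap\widetilde{\Omega}}\bar{u}^2\,dz\;\leq\;2r\,e^{2\sqrt{\lambda}r}\int_{B_r(x_0)\cap\Omega}u^2\,dx.
\end{equation*}
Combining the last two displays produces $\fint_{B_r(x_0)}u^2\,dx\geq c_2\,r^A$ for all sufficiently small $r$. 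Comparing with the definition \ef{def}, any valid vanishing order $k$ at $x_0$ must satisfy $2k\leq A$, so the vanishing order is bounded above by $A/2=C(1+\sqrt{\lambda})=C(\|V\|_{W^{1,\infty}}^{1/2}+1)$, as claimed.

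The main technical point --- already settled in reaching Lemma \ref{upper bound of frequency} --- is uniformity for $x_0$ close to $\partial\Omega$, where $B_r(z_0)\cap\widetilde{\Omega}$ fails to be star-shaped and the frequency $N_{\bar{u}}$ is not directly usable. This is exactly why the $L^\infty$-based doubling index $M_{\bar{u}}$ and its almost-monotonicity up to the boundary (Lemma \ref{doubling index on boundary}) are employed in place of $N_{\bar{u}}$; with those tools in hand, the remaining steps above reduce to careful tracking of constants in the iteration and in the De Giorgi estimate, and I do not expect any serious further obstacle.
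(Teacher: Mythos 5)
Your proposal is correct and follows essentially the same route as the paper: bound the doubling index uniformly by $C(1+\sqrt{\lambda})$ via Lemma~\ref{upper bound of frequency}, unroll it at dyadic scales to obtain a polynomial $L^\infty$ lower bound, convert to $L^2$ via Lemma~\ref{degiorgi}, and pass from $\bar{u}$ back to $u$. The only cosmetic differences are that you apply the De Giorgi step after the dyadic iteration and spell out the Fubini reduction to $u$, whereas the paper applies De Giorgi first and simply notes that $u$ and $\bar{u}$ share the same vanishing order.
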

\begin{remark}
Bakri\cite{B2012} already obtained the same  maximal vanishing order  in the local sense by using the  Carleman estimate. However we derived the result for the boundary value problem \eqref{basic equation} which needs to establish a kind of  monotonicity property for vanishing orders near the domain boundary. In order to deal with this problem, we first show the monotonicity property of the frequency function at points that have some distance away from the boundary, then introduce the doubling index and give the mutually control relationships between the doubling index and the frequency function, finally show an almost monotonicity property of the doubling index to obtain the maximal vanishing order.
\end{remark}

\begin{proof}
Noting that the vanishing order of $u(x)$ and $\bar{u}(x,t)$ is the same, we will show that the maximal vanishing order of $\bar{u}$ at any point $z=(x,0)$ with $x\in\Omega$ can be controlled by  $M_{\bar{u}}(z,r)$.

Applying Lemma \ref{degiorgi},  using the definition of $M_{\bar{u}}(z,r)$ in (\ref{M}) and  Lemma \ref{doubling index on boundary}, then for any $r< r_0$,     it holds
\begin{align*}
\fint_{B_r(z)\cap\widetilde{\Omega}}\bar{u}^2dz&\geq \|\bar{u}\|^2_{L^{\infty}(B_{r/2}(z)\cap \widetilde{\Omega})}\nonumber\\[2mm]
&= 2^{-M_{\bar{u}}(z,r)}\|\bar{u}\|^2_{L^{\infty}(B_{r}(z))}\nonumber\\[2mm]
&\geq 2^{-CM_{\bar{u}}(z,r_0)-C}\|\bar{u}\|^2_{L^{\infty}(B_{r}(z))}.
\end{align*}
Furthermore, assume that there exists $k\in \mathbb{N}$ such that $2^{k}r<r_0\leq 2^{k+1}r$, one has
\begin{align*}
\|\bar{u}\|^2_{L^{\infty}(B_{r}(z))}&\geq \left(2^{-CM_{\bar{u}}(z,r_0)-C}\right)^{k+1}\|\bar{u}\|^2_{L^{\infty}(B_{2^{k+1}r}(z))}\nonumber\\[2mm]
&\geq \left(2^{-CM_{\bar{u}}(z,r_0)-C}\right)^{\log_2\frac{r_0}{r}+1} \|\bar{u}\|^2_{L^{\infty}(B_{r_0}(z))}\nonumber\\[2mm]
&=\left(\frac{r}{2r_0}\right)^{CM_{\bar{u}}(z,r_0)+C}\|\bar{u}\|^2_{L^{\infty}(B_{r_0}(z))}.
\end{align*}
Therefore,
\begin{align*}
\fint_{B_r(z)\cap\widetilde{\Omega}}\bar{u}^2dz&\geq \left(\frac{r}{4r_0}\right)^{CM_{\bar{u}}(z,r_0)+C}\|\bar{u}\|^2_{L^{\infty}(B_{r_0}(z))},
\end{align*}
which yields that the vanishing order of $\bar{u}$ at $z$ is not large than $C\left(M_{\bar{u}}(z,r_0)+1\right)$.
Moreover, by using Lemma \ref{upper bound of frequency}, we derive that the upper bound of  the vanishing order of $\bar{u}$ is
$ C\left(\|V\|^{\frac{1}{2}}_{W^{1,\infty}}+1\right)$.
This completes the proof.
\end{proof}

\section{Measure estimate of nodal sets}
In this section,  we always assume that $V$ is analytic.  We will establish measure upper bounds of nodal sets of
  $u$  in a subdomain away from the boundary and the rest subdomain  near the boundary, respectively.

First, we consider  the nodal set of $u$ in a subdomain away  from the boundary as follows.
\begin{lemma}\label{interior nodal set}
Assume $V$ is analytic, let  $r_0$, $C_0$ be    constants as in  Lemma \ref{when is star shaped},
there exists a positive constant $C$ depending only on $n$ and $\Omega$, such that for any  $0<r<\min\{\frac{r_0}{2}, \frac{1}{8C_0}\}$, it holds
\begin{equation}
\mathcal{H}^{n-1}\left\{x\in\Omega_{r}\ |\ u(x)=0\right\}\leq \frac{C}{r}\left(1+\sqrt{\lambda}\right),
\end{equation}
where $\Omega_{r}=\{x\in\Omega\ |\ dist(x,\partial\Omega)>r\}$.
\end{lemma}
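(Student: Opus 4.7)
The plan is to localize the nodal-set estimate to balls of radius comparable to $r$, obtain a local measure bound proportional to the doubling index established in Lemma~\ref{upper bound of frequency}, and then sum over balls.

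\textbf{Localization and translation of the doubling bound.} Cover $\Omega_r$ by a collection $\{B_{r/2}(x_i)\}_{i=1}^{N_*}$ with $x_i\in\Omega_r$ and uniformly finite overlap; standard volume counting gives $N_* \le C(n,\Omega)\,r^{-n}$. Each concentric ball $B_r(x_i)$ lies in $\Omega$, and for $r$ below a threshold depending only on $n$ and $\Omega$ one has $\operatorname{dist}(x_i,\partial\Omega)>r\ge C_0 r^2$, so the machinery of Section~3 applies at $z_i=(x_i,0)$. From $\bar u(x,t)=u(x)e^{\sqrt\lambda t}$ the elementary bounds $\|u\|_{L^\infty(B_\rho(x_i))}\le\|\bar u\|_{L^\infty(B_\rho(z_i))}\le e^{\sqrt\lambda\rho}\|u\|_{L^\infty(B_\rho(x_i))}$ hold for every $\rho\le r_0$; combining these with Lemma~\ref{upper bound of frequency} yields
\[
M_u(x_i,r):=\log_2\frac{\|u\|^2_{L^\infty(B_r(x_i))}}{\|u\|^2_{L^\infty(B_{r/2}(x_i))}}\le M_{\bar u}(z_i,r)+C\sqrt\lambda\, r\le C(1+\sqrt\lambda).
\]

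\textbf{Local nodal-set bound by complexification.} Since $V$ is analytic, $u$ is real analytic in $\Omega$ and extends to a holomorphic function $u_{\mathbb C}$ on the complex neighborhood $U_i=\{z\in\mathbb C^n:\operatorname{Re}z\in B_r(x_i),\ |\operatorname{Im}z|<c_0 r\}$ with $\sup_{U_i}|u_{\mathbb C}|\le C\sup_{B_r(x_i)}|u|$, by standard quantitative analyticity estimates for elliptic equations with analytic coefficients. Combining with the doubling bound above gives
\[
\sup_{U_i}|u_{\mathbb C}|\le 2^{CM_u(x_i,r)}\sup_{B_{r/4}(x_i)}|u|.
\]
For any unit vector $v\in S^{n-1}$ and any base point $y\in B_{r/4}(x_i)$, the restriction $\zeta\mapsto u_{\mathbb C}(y+\zeta v)$ is holomorphic on $\{|\zeta|<c_0 r\}$, and Jensen's formula together with the above growth estimate bounds the number of its zeros in $\{|\zeta|<c_0 r/2\}$ by $CM_u(x_i,r)\le C(1+\sqrt\lambda)$. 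The Cauchy--Crofton integral-geometry formula, integrating this line-by-line real-zero count over directions and base points, then gives
\[
\mathcal H^{n-1}(\{u=0\}\cap B_{r/2}(x_i))\le Cr^{n-1}(1+\sqrt\lambda).
\]
Summing over the $N_*\le C/r^n$ balls produces the claimed bound $C(1+\sqrt\lambda)/r$.

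\textbf{Main obstacle.} The step requiring most care is the quantitative complex extension in the second paragraph: one needs the polydisk radius to be of order $r$ with $c_0$ independent of $\lambda$, and the sup-norm extension estimate to depend only on $n$ and $\Omega$, so that the real-valued doubling information transfers cleanly into the complex domain. This is where the analyticity hypothesis on $V$ is truly used; once the extension is in place, the Jensen-type zero counting on complex lines and the Cauchy--Crofton step converting line counts to $(n-1)$-dimensional measure are classical (cf.\ Donnelly--Fefferman \cite{Donnelly}, Lin \cite{Lin}).
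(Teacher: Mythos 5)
Your proof is correct and follows essentially the same route as the paper: the doubling bound $M\lesssim 1+\sqrt\lambda$ from Lemma~\ref{upper bound of frequency}, a covering of $\Omega_r$ by balls of radius $\sim r$, and a local complexification plus Jensen plus Cauchy--Crofton estimate (Lin's Theorem~3.1$'$) summed over the cover. The one structural difference is cosmetic: you translate the doubling control from $\bar u$ back to $u$ (via the explicit factor $e^{\sqrt\lambda t}$, giving $M_u \le M_{\bar u}+C\sqrt\lambda r$) and complexify $u$ directly in $\mathbb R^n$, whereas the paper carries out the complexification for $\bar u$ in the $(n+1)$-dimensional slab $\Omega_r\times(-r/32,r/32)$, obtains an $\mathcal H^n$ bound on $\{\bar u=0\}$, and then divides by the slab thickness $r/16$ to recover the $\mathcal H^{n-1}$ bound on $\{u=0\}$ using $e^{\sqrt\lambda t}\neq 0$. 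Both are valid; yours avoids the final slice-projection at the cost of the translation step, and the underlying argument is the same. One caveat on the step you flag as the main obstacle: the extension bound $\sup_{U_i}|u_{\mathbb C}|\le C\sup_{B_r(x_i)}|u|$ with $C$ depending only on $n$ and $\Omega$ is, as literally stated, too optimistic when $r$ is not small relative to $\lambda^{-1/2}$ --- the quantitative analyticity estimates for $\Delta u+Vu=0$ give a factor of order $e^{C\sqrt\lambda r}$, not $O(1)$. Since $r<1$, this only adds $O(\sqrt\lambda)$ to the exponent, which is absorbed into the $C(1+\sqrt\lambda)$ doubling bound and leaves the conclusion unchanged; the paper elides the same point by deferring to Lin's Theorem~3.1$'$ without rescaling.
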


\begin{proof}
From Lemma \ref{upper bound of frequency}, for any $z=(x,0)$ with $x\in\Omega_{r}$,
\begin{equation}\label{bound of M}
M_{\bar{u}}(z,2r)\leq C\left(1+\sqrt{\lambda}\right),\quad \forall\ r\leq r_0/2.
\end{equation}
Now we fix a point $x_0\in\Omega_r$, so $dist(x_0,\partial\Omega)\geq r\geq C_0(2r)^2$.  Denote $z_0=(x_0,0)$,   from Lemma $\ref{when is star shaped}$, $B_{2r}(z_0)\cap\widetilde{\Omega}$ is star-shaped with respect to $z_0$. For any $z_p\in  B_{r/8}(z_0)$ with $z_p=(x_p,0)$, it is easy to see that
\begin{equation*}
dist(x_p,\partial\Omega)\geq \frac{7}{8}r> \frac{C_0r^{1+\alpha}}{16}.
\end{equation*}
 Then by Lemma \ref{changing center} and Lemma \ref{relationship} with $\eta=\frac{1}{4}$, for any $z_p\in  B_{r/8}(z_0)$, there holds
\begin{equation*}
N_{\bar{u}}(z_p,r/4)\leq CN_{\bar{u}}(z_0,r)+C\leq CM_{\bar{u}}(z_0,8r/5)+C\leq C\left(1+\sqrt{\lambda}\right).
\end{equation*}
Therefore, by using Lemma $\ref{doubling condition}$ twice and noting $B_{r/8}(z_0)\subset B_{r/4}(z_p)$, one gets
\begin{align*}
\fint_{B_{r/16}(z_p)}\bar{u}^2dz&\geq 2^{-C(N_{\bar{u}}(z_p,r/4)+1)}\fint_{B_{r/4}(z_p)}\bar{u}^2dz
\\&\geq 2^{-C(\sqrt{\lambda}+1)}\fint_{B_{r/8}(z_0)}\bar{u}^2dz
\\&\geq 2^{-C(\sqrt{\lambda}+1)}\fint_{B_{r}(z_0)}\bar{u}^2dz.
\end{align*}
So for any ball $B_{r/16}(z_p)$ with $z_p\in B_{r/8}(z_0)$, there exists a point $z_p^*\in B_{r/16}(z_p)$, such that
\begin{equation}\label{lower of zp}
|\bar{u}(z_p^*)|\geq 2^{-C(\sqrt{\lambda}+1)}\left(\fint_{B_{r}(z_0)}\bar{u}^2dz\right)^{\frac{1}{2}}.
\end{equation}
On the other hand, from Lemma $\ref{degiorgi}$, there holds
\begin{equation}\label{upper of zp}
\|\bar{u}\|_{L^{\infty}(B_{15r/16}(z_0))}\leq C\left(\fint_{B_{r}(z_0)}\bar{u}^2dz\right)^{\frac{1}{2}}.
\end{equation}
Let $p_j=\frac{r}{4}e_j+z_0$, $j=1,2,\cdot,n+1$ with $e_j$ the $j-$th direction of the $z-$coordinates system. Then for such $p_j$, from $\label{lower of zp}$ and $\label{upper of zp}$, there exists $z_{p_j}^*\in B_{r/16}(z_{p_j})$, such that
\begin{equation}
\begin{cases}
\bar{u}(z_{p_j}^*)\geq 2^{-C(\sqrt{\lambda}+1)}\left(\fint_{B_{r}(z_0)}\bar{u}^2dz\right)^{\frac{1}{2}},\\
\|\bar{u}\|_{L^{\infty}(B_{5r/8}(z_{p_j}^*))}\leq C\left(\fint_{B_{r}(z_0)}\bar{u}^2dz\right)^{\frac{1}{2}}.
\end{cases}
\end{equation}
Since $\bar{u}$ is analytic, for any $j=1,2,\cdots,n+1$, the function $f_j(w,t)=\bar{u}(z_{p_j}^*+tw)$ is also analytic with respect to $t$ for any $w$ in $\partial B_1$ and $t\in\left(-\frac{5r}{8},\frac{5r}{8}\right)$. So one can extend $f_j(w,t)$ to $f_j(w,t+i\tau)$ with $|t|<\frac{5r}{8}$ and $|\tau|<\tau_0r$ for some positive constant $\tau_0$ depending only on $n$. So
\begin{equation}
\begin{cases}
|f_j(w,0)|\geq 2^{-C(\sqrt{\lambda}+1)}\left(\fint_{B_{r}(z_0)}\bar{u}^2dz\right)^{\frac{1}{2}},\\
|f_j(w,t+i\tau)|\leq C\left(\fint_{B_{r}(z_0)}\bar{u}^2dz\right)^{\frac{1}{2}}.
\end{cases}
\end{equation}
Applying the theory of zero distribution of analytic functions (see Lemma $6.1$ in \cite{Donnelly}),
  there holds
\begin{equation}
Card\left\{|t|<\frac{1}{2}\ |\ \bar{u}(z_{p_j}^*+tw)=0\right\}\leq C\left(\sqrt{\lambda}+1\right),
\end{equation}
and in particular, for any $w\in \partial B_1$,
\begin{equation}
Card\left\{z_{p_j}^*+tw\in B_{r/16}(z_0)\ |\ u(z_{p_j}^*+tw)=0\right\}\leq C\left(\sqrt{\lambda}+1\right).
\end{equation}
Then from the geometric formula, there holds
\begin{equation}\label{local nodal}
\mathcal{H}^{n}\left\{z\in B_{r/16}(z_0)\ |\ \bar{u}(z)=0\right\}\leq Cr^n\left(1+\sqrt{\lambda}\right).
\end{equation}
Now let $\{z_j\}$ be the points with $z_j=(x_j,0)$ and $x_j\in\Omega_{r}$, such that the balls $B_{r/32}(z_j)$ are mutually disjoint, and $\{B_{r/16}(z_j)\}$ covers $\Omega_{r}\times(-r/32,r/32)$. The number of these  balls can be controlled by $\hat{C}=C|\Omega|/r^{n}$, where $C$ is also a positive constant depending only on $n$ and $\Omega$. Then there holds
\begin{equation*}
\mathcal{H}^{n}\left\{z\in \Omega_{r}\times(-r/32,r/32)\ |\ \bar{u}(z)=0\right\}\leq C\left(1+\sqrt{\lambda}\right).
\end{equation*}
From the fact that $\bar{u}(x,t)=u(x)e^{\sqrt{\lambda}t}$, and $e^{\sqrt{\lambda}t}\neq0$, one has
\begin{align*}
&\mathcal{H}^{n-1}\left\{x\in \Omega_{r}\ |\ u(x)=0\right\}\nonumber
\\&\leq\frac{16}{r}\mathcal{H}^{n}\left\{z\in \Omega_{r}\times(-r/32,r/32)\ |\ \bar{u}(z)=0\right\}\nonumber
\\&\leq \frac{C}{r}\left(1+\sqrt{\lambda}\right).
\end{align*}
This completes the proof.
\end{proof}

In order to estimate the nodal set near the boundary, we straighten $\partial\widetilde{\Omega}$.  Let $x^{*}\in \partial\Omega$, and denote $z^{*}=(x^{*},0).$
We first translate $x^{*}$ to the origin point, so that $z^{*}=(0,0)$.  After some necessary rotation transformation,  there exists a positive constant $r_0=r_0(n,\Omega)>0$,
such that for any $r\in(0,r_0)$, the subdomain $\Omega\cap B_r(0)$ can be expressed in the following form:
\begin{equation*}
\Omega\cap B_r(0)=\{x\in B_r(0)\ |\ x_n>\gamma(x_1,\cdots,x_{n-1})\},
\end{equation*}
where $\gamma$ is the defining  $C^{1,\alpha}$ function of  $\partial\Omega$ in $B_r(0)$ with $\gamma(0)=0$, $\nabla\gamma(0)=0$, and moreover,  for any $x=(x',x_n)\in B_{r_0}(0)\cap\partial\Omega$, there holds
\begin{equation}\label{require of gamma}
|\nabla\gamma(x')-\nabla\gamma(0)|\leq C|x'|^{\alpha}.
\end{equation}
Let $z=(x,t)=(x_1,\cdots,x_n,t)$, and  define $y=\Phi(z)$ as follows,
\begin{equation*}
\left\{
\begin{array}{lll}
y_i=\Phi_i(z):=x_i,\quad i=1,\cdots,n-1,\\
y_n=\Phi_n(z)=x_n-\gamma(x_1,\cdots,x_{n-1}),\\
y_{n+1}=t.
\end{array}
\right.
\end{equation*}
The map $y=\Phi(z)$ straightens the boundary near $z_0$. Denote $z=\Psi(y)$ with $\Psi=\Phi^{-1}$, and $D=\Phi\left(\widetilde{\Omega}\cap B^z_{r_0}(0,0)\right)$.  Since $\partial\Omega$ is $C^{1,\alpha}$ continuous, there exist   $\tau_1\in(\frac{1}{2},1)$ and $\tau_2\in(1,2)$ depending only on $n$ and $\Omega$, such that
\begin{equation}\label{ball in y and z space}
B^{y}_{r/2}(y_0)\cap D\subseteq B^{y}_{\tau_1r}(y_0)\cap D \subseteq \Phi\left(B^z_r(z_0)\right)\cap D \subseteq B^{y}_{\tau_2r}(y_0)\cap D \subseteq B^{y}_{2r}(y_0)\cap D,
\end{equation}
for any $r<r_0$ and $z_0=(x_0,0) $ with $x_0\in\partial\Omega$  and $dist(x_0,0)< r_0$, $y_0=\Phi(z_0)$.
Here  $B^y_r(y_0)$ and $B^z_r(z_0)$  are the balls in $y$-space and $z$-space,
respectively. In the following, we omit the superscript $y$ and $z$ without confusion.

Let $\widetilde{u}(y)=\bar{u}(\Psi(y))$ and $\widetilde{V}(y)=\bar{V}(\Psi(y))$,   then
$\widetilde{u}(y)$ satisfies  the following equation:
\begin{equation}\label{rewrite}
\mathcal{L}\widetilde{u}+\widetilde{V}\widetilde{u}=div(L\nabla\widetilde{u})+\widetilde{V}\widetilde{u}=0, \quad \mbox{in} \quad (B_{r_0}^y(0))_+,
\end{equation}
with the boundary condition
\begin{equation*}
 \widetilde{u}=0 \quad \mbox{on}\quad  \Gamma\equiv\{y\ |\ y_n=0\}\cap (B_{r_0}^y(0))_+,
\end{equation*}
where  $L=(L_{ij})_{(n+1)\times(n+1)}$ is the following matrix:
\begin{equation}\label{def of L}
\left\{
\begin{array}{lll}
L_{ij}=\delta_{ij},\quad i,j=1,\cdots,n-1,\\[1mm]
L_{ij}=\delta_{ij},\quad i=n+1\ or\ j=n+1,\\[1mm]
L_{in}=L_{ni}=-\gamma_{y_i},\quad i=1,\cdots,n-1,\\[1mm]
L_{nn}=1+\sum\limits_{i=1}^{n-1}\gamma_{y_i}^2.
\end{array}
\right.
\end{equation}
From (\ref{definition of lambda}), we have
\begin{equation}\label{v}
\widetilde{V}(y)=\bar{V}(\Psi(y))\leq 0, \quad \mbox{and}\quad  \|\widetilde{V}\|_{L^{\infty}}=\|\bar{V}\|_{L^{\infty}}=\|V-\lambda\|_{L^{\infty}}\leq  C\|\nabla V\|_{L^{\infty}}.
\end{equation}
Moreover, $\mathcal{L}$ is uniformly elliptic, whose elliptic constants depend only on $n$ and $\Omega$, and from $(\ref{require of gamma})$,
\begin{equation}
|L_{ij}(y)-L_{ij}(0)|\leq C|y|^{\alpha},
\end{equation}
for any $y\in (B_{r_0}^y(0))_+$.

Since $\widetilde{u}(y)=\bar{u}(\Psi(y))$, for $y_0=\Phi(z_0)$ with $y_0\in Q$, we define the doubling index centered at $y_0$ with radius $r$ below:
\begin{align}\label{frequency of two phase 1}
M_{\widetilde{u}}(y_0,r)&=\log_2\frac{\|\widetilde{u}\|^2_{L^{\infty}(\Phi(B_r(z_0))\cap D)}}{\|\widetilde{u}\|^2_{L^{\infty}(\Phi(B_{r/2}(z_0))\cap D)}}
=\log_2\frac{\|\bar{u}\|^2_{L^{\infty}(B_{r}(z_0)\cap\widetilde{\Omega})}}{\|\bar{u}\|^2_{L^{\infty}(B_{r/2}(z_0)\cap\widetilde{\Omega})}}\nonumber\\[2mm]
&=M_{\bar{u}}(z_0,r).
\end{align}
So from  Lemma $\ref{doubling index on boundary}$, for any $\rho<r\leq r_0$,
\begin{equation}\label{monotonicity of M in y space}
M_{\widetilde{u}}(y_0,\rho)\leq CM_{\widetilde{u}}(y_0,r)+C,
\end{equation}
where $C$ and $r_0$ are positive constants depending only on $n$ and $\Omega$.

Moreover, we define the doubling index of $\widetilde{u}$ of a cube $Q\subseteq D$ as follows,
\begin{equation}\label{MQ}
M_{\widetilde{u}}(Q)=\max\limits_{y_0\in Q,r\leq 10(n+1) diam(Q)}M_{\widetilde{u}}(y_0,r).
\end{equation}

In the following lemma, we establish the estimate of  nodal set of $\widetilde{u}$ in some cube $Q$ by using its doubling index $M_{\widetilde{u}}(Q)$.
\begin{lemma}\label{nodal set of cube}
Assume V is analytic, let $Q$ be a cube in $\mathbb{R}^{n+1}$ with its side length $r$, and the distance between $Q$ and $\Gamma$  is greater than or equal to $10(n+1)r$, then
\begin{equation}
\mathcal{H}^n\left\{y\in Q\ |\ \widetilde{u}(y)=0\right\}\leq C(M_{\widetilde{u}}(Q)+1)r^n,
\end{equation}
where $C$ is a positive constant depending only on $n$ and $\Omega$.
\end{lemma}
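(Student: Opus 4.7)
The plan is to reduce the bound to the interior nodal set estimate already established in Lemma \ref{interior nodal set}, by transferring the problem from the straightened $y$-coordinates back to the original $z$-coordinates, where the equation $\Delta\bar{u}+\bar{V}\bar{u}=0$ has the \emph{analytic} potential $\bar{V}=V-\lambda$ required for the complexification argument. Because $\Psi=\Phi^{-1}$ is bi-Lipschitz on $B_{r_0}(0)$ with bi-Lipschitz constants depending only on $n$ and $\Omega$, the $n$-dimensional Hausdorff measure of the nodal set of $\widetilde{u}$ in $Q$ is comparable to that of $\bar{u}$ in $\Psi(Q)$. The hypothesis $\mathrm{dist}(Q,\Gamma)\geq 10(n+1)r$ combined with (\ref{ball in y and z space}) guarantees that $\Psi(Q)$ lies at $z$-distance comparable to $r$ from $\partial\widetilde{\Omega}$, so every $z_0\in\Psi(Q)$ admits a surrounding ball $B_{c_0 r}(z_0)\subset\widetilde{\Omega}$ for some $c_0=c_0(n,\Omega)>0$.

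I then cover $Q$ by finitely many subcubes of side comparable to $r$, or equivalently cover $\Psi(Q)$ by $z$-balls $\{B_{c_0 r/16}(z_j)\}$ with $z_j=\Psi(y_j)$, $y_j\in Q$, whose cardinality is bounded by a constant $\hat{C}=\hat{C}(n,\Omega)$. By the identity (\ref{frequency of two phase 1}), the almost-monotonicity (\ref{monotonicity of M in y space}), together with Lemma \ref{changing center} and Lemma \ref{doubling index on boundary}, the doubling index $M_{\bar{u}}(z_j,\rho)$ for any $\rho$ of order $r$ is controlled by $CM_{\widetilde{u}}(Q)+C$. Lemma \ref{relationship} then translates this into a frequency bound $N_{\bar{u}}(z_p,c_1 r)\leq C(M_{\widetilde{u}}(Q)+1)$ valid at every $z_p$ near $z_j$.

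On each covering ball I run exactly the argument used inside the proof of Lemma \ref{interior nodal set}: Lemma \ref{doubling condition} supplies a point $z_j^*\in B_{c_0 r/16}(z_j)$ with
\begin{equation*}
|\bar{u}(z_j^*)|\geq 2^{-C(M_{\widetilde{u}}(Q)+1)}\Bigl(\fint_{B_{c_0 r}(z_j)}\bar{u}^{\,2}\,dz\Bigr)^{1/2},
\end{equation*}
while Lemma \ref{degiorgi} controls $\|\bar{u}\|_{L^{\infty}(B_{c_0 r/2}(z_j))}$ by the same $L^{2}$ average. Since $\bar{V}$ is analytic, $\bar{u}$ is analytic in the interior of $\widetilde{\Omega}$, and Lin's complexification argument combined with the integral geometry formula (Theorem $3.1'$ of \cite{Lin}) yields the local bound
\begin{equation*}
\mathcal{H}^{n}\{z\in B_{c_0 r/16}(z_j):\bar{u}(z)=0\}\leq C(M_{\widetilde{u}}(Q)+1)\,r^{n}.
\end{equation*}

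Summing over the $\hat{C}$ covering balls and pushing the estimate back to $y$-coordinates through the bi-Lipschitz map $\Phi$ (whose Jacobian is uniformly bounded away from zero on the relevant set) gives the claimed estimate. The main technical obstacle is the bookkeeping in the second paragraph: one has to verify that \emph{both} the covering number and the doubling index on each covering ball can be controlled purely in terms of $n$, $\Omega$ and $M_{\widetilde{u}}(Q)$, which requires the comparability (\ref{ball in y and z space}) between $y$- and $z$-balls and the almost-monotonicity (\ref{monotonicity of M in y space}) in a uniform way independent of how close $Q$ is to $\Gamma$. Once these transfers are in place, the rest is a direct replay of the interior argument.
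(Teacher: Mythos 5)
Your proposal is correct and follows essentially the same route as the paper: transfer from $y$-coordinates to $z$-coordinates via the bi-Lipschitz map $\Psi$ so that the equation has the analytic potential $\bar V$, cover $\Psi(Q)$ by a bounded number of $z$-balls of radius comparable to $r$, apply the complexification/integral-geometry argument from the proof of Lemma \ref{interior nodal set} on each ball together with the doubling-index control coming from $M_{\widetilde u}(Q)$, and transfer the measure bound back to $y$-coordinates. The paper organizes the bookkeeping slightly more directly (using the single enclosing ball $B_{\sqrt{n+1}\,r}(z_0)$ around the center of $Q$ and reading off $M_{\bar u}(z_0,4\sqrt{n+1}\,r)\le M_{\widetilde u}(Q)$ straight from the definition of $M_{\widetilde u}(Q)$), but the substance is the same.
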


\begin{proof}
Let $r$ be the side length of $Q$, and $y_0$ be the center of $Q$. Then $Q\subseteq B_{\frac{\sqrt{n+1}}{2}r}(y_0)$, and $\Psi\left(B_{\frac{\sqrt{n+1}}{2}r}(y_0)\right)\subset B_{\tau^{-1}_1\frac{\sqrt{n+1}}{2}r}(z_0)\subset B_{\sqrt{n+1}r}(z_0)$, where $z_0=\Psi(y_0)$. Since $V$ is analytic in $\Omega$, $\bar{u}$ is analytic in the interior of $\Omega$, and $B_{\sqrt{n+1}r}(z_0)\subset \widetilde{\Omega}$, 
from $(\ref{frequency of two phase 1})$, $(\ref{MQ})$, Lemma $\ref{relationship}$, we have
\begin{align*}
\mathcal{H}^n\left\{y\in Q\ |\widetilde{u}(y)=0\right\}&\leq \mathcal{H}^n\left\{y\in B_{\frac{\sqrt{n+1}}{2}r}(y_0)\ |\ \widetilde{u}(y)=0\right\}\\
&\leq C\mathcal{H}^n\left\{z\in B_{\sqrt{n+1}r}(z_0)\ |\ \bar{u}(z)=0\right\}\\
&\leq C\left(N_{\bar{u}}(z_0,2\sqrt{n+1}r)+1\right)r^{n}\\
&\leq C\left(M_{\bar{u}}(z_0,4\sqrt{n+1}r)+1\right)r^{n}\\
&\leq C\left(M_{\widetilde{u}}(y_0,4\sqrt{n+1}r)+1\right)r^{n}\\
&\leq C(M_{\widetilde{u}}(Q)+1)r^{n}.
\end{align*}
This completes the proof.
\end{proof}

\begin{remark}Let us briefly explain the process of proving Lemma \ref{nodal set of cube}.
  Because $\widetilde{u}$ satisfies a general second-order elliptic partial differential equation,  we do not directly estimate its nodal sets of  $\widetilde{u}$ in cubes $Q$ that are  far from the boundary
$\Gamma$, where the distance between $Q$ and $\Gamma$  is greater than or equal to $10(n+1)r$.
 Instead, we estimate the nodal sets of the original function $\bar{u}$, which satisfies an equation with analytic coefficients. This allows us to employ the techniques from the analytic setting to obtain a desired estimate
in which the constant $C$ depends only on $n$ and $\Omega$ rather than the potential $V$. The relation between the doubling indices of $\widetilde{u}$ and $\bar{u}$, namely $M_{\widetilde{u}}(y_0, r)= M_{\bar{u}}(z_0, r)$, then enables us to recover the nodal set estimates for $\widetilde{u}$ in $Q$.
\end{remark}

Next, we  adopt the idea of   Theorem 1.7 in \cite{Hardt}  to estimate the measure of the nodal set of $\widetilde{u}$ in some cube $Q$ touching the boundary.

\begin{lemma}\label{nodal set of cube near the boundary}
Suppose that one of the faces of cube $Q\subseteq D$ is  on $\Gamma$, and the center of this  face  is  the origin.
Let the side length of $Q$ be $r_Q$. If $r_Q\leq\|\nabla V\|^{-\frac{1}{2}}_{L^{\infty}}\left(M_{\widetilde{u}}(Q)\right)^{-CM_{\widetilde{u}}(Q)}$ with $M_{\widetilde{u}}(Q)>1$, then
\begin{equation*}
\mathcal{H}^n\left\{y\in Q\ |\ \widetilde{u}(y)=0\right\}\leq C'M_{\widetilde{u}}(Q)r_Q^n,
\end{equation*}
where $C$ and $C'$ are positive constants depending only on $n$ and $\Omega$.
\end{lemma}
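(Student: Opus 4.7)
The plan is to reflect $\widetilde u$ oddly across the flat boundary $\Gamma$ and convert the boundary problem on $Q$ into an interior problem on a doubled cube, to which a Lemma~\ref{nodal set of cube}--type argument can be applied. Writing $\sigma(y',y_n,y_{n+1}):=(y',-y_n,y_{n+1})$ and $Q^{\ast}:=Q\cup\sigma(Q)$, I would set
\[
\widehat u(y):=\begin{cases}\widetilde u(y),& y_n\ge 0,\\[2pt] -\widetilde u(\sigma y),& y_n<0.\end{cases}
\]
Continuity across $\Gamma$ follows from $\widetilde u|_\Gamma=0$, and a standard reflection calculation shows that $\widehat u$ is an $H^1$ weak solution of $\mathrm{div}(\widehat L\,\nabla\widehat u)+\widehat V\,\widehat u=0$ in $Q^{\ast}$, where $\widehat L$ extends the matrix in \eqref{def of L} by even parity in $y_n$ for the diagonal-type entries and odd parity for the mixed entries $L_{in},L_{ni}$ with $i\ne n$, while $\widehat V(\sigma y)=\widetilde V(y)$. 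Uniform ellipticity and the bound $|\widehat V|\le C\|\nabla V\|_{L^\infty}$ carry over; $\widehat L$ may jump across $\Gamma$, but this is harmless in the weak formulation.

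Next I would bound the doubling index of $\widehat u$ on $Q^{\ast}$. Since $\|\widehat u\|_{L^\infty(B_r(p))}=\|\widetilde u\|_{L^\infty(B_r(p)\cap\{y_n\ge 0\})}$ whenever $p\in\Gamma$, a short covering argument combined with the almost-monotonicity of $M_{\widetilde u}$ in Lemma~\ref{doubling index on boundary} and the identification \eqref{frequency of two phase 1} yields $M_{\widehat u}(Q^{\ast})\le C\bigl(M_{\widetilde u}(Q)+1\bigr)$.

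The principal difficulty, and the step that forces the smallness assumption on $r_Q$, is to produce a nodal-set estimate for $\widehat u$ on $Q^{\ast}$ despite the fact that $\widehat L$ is only Lipschitz on each half and discontinuous across $\Gamma$, so that Lemma~\ref{nodal set of cube} is not directly available in $y$-coordinates. I plan to handle this by approximating the $C^2$ boundary $\partial\Omega$ near the origin by its tangent hyperplane, which is analytic, and extending $\bar u$ to a neighborhood of that hyperplane by Schwarz reflection; the equation $\Delta v+\bar V v=0$ continues to hold in the extended region because $\bar V$ is analytic and admits a natural analytic extension. The quadratic correction $\gamma(x')$ has size $O(r_Q^2)$, so the discrepancy between this Schwarz extension and the actual reflected function $\widehat u$ is concentrated in a layer of thickness $O(r_Q^2)$ near $\Gamma$. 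The hypothesis $r_Q\le\|\nabla V\|_{L^\infty}^{-1/2}(M_{\widetilde u}(Q))^{-CM_{\widetilde u}(Q)}$ is calibrated so that, after using the doubling estimates of Lemma~\ref{doubling condition} to control derivatives of $\bar u$ in terms of $M_{\widetilde u}(Q)$, this discrepancy is pointwise at most $2^{-M_{\widetilde u}(Q)}\|\widehat u\|_{L^\infty(Q^{\ast})}$, too small to create new nodal components. Lin's complexification and integral-geometry inequality applied to the analytic Schwarz extension then delivers $\mathcal H^n\{y\in Q^{\ast}:\widehat u=0\}\le C\,M_{\widetilde u}(Q)\,r_Q^n$; the reflection symmetry and the harmless contribution $\mathcal H^n(\Gamma\cap Q)\lesssim r_Q^n$, absorbed since $M_{\widetilde u}(Q)>1$, yield the stated bound.
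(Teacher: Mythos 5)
Your opening move---odd reflection of $\widetilde u$ across $\Gamma$ to obtain a solution on the doubled cube, with the coefficient matrix extended by the appropriate parities---is exactly what the paper does, and your doubling-index bound on the doubled cube is in line with the paper's use of Theorem~\ref{main result 1} to control the vanishing order. The divergence, and the gap, is in what you do next. You correctly observe that the reflected coefficients $\widehat L$ are no longer analytic, so that Lin's complexification / integral-geometry argument (the engine of Lemma~\ref{nodal set of cube}) cannot be applied in $y$-coordinates. You then try to restore analyticity by flattening $\partial\Omega$ to its tangent hyperplane and taking a Schwarz reflection of $\bar u$ there. This does not work: $\bar u$ vanishes on the curved $\partial\Omega$, not on the tangent hyperplane, so there is no odd Schwarz extension of $\bar u$ across that hyperplane, and hence no analytic extension to compare against. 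Even setting that aside, the final step is not a valid inference: knowing that a comparison function differs from $\widehat u$ by at most $2^{-M}\|\widehat u\|_{L^\infty}$ pointwise gives no control on $\mathcal H^n$ of the nodal set of $\widehat u$ in terms of the nodal set of the comparison function; small $L^\infty$ perturbations can change nodal sets arbitrarily (think of a solution that is itself exponentially small on a large region). Some quantitative lower bound preventing $\widehat u$ from being too small on a significant set would be needed, and you do not produce one.

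The paper avoids all of this: after the odd reflection it does not try to recover analyticity at all. Instead it invokes Theorem~1.7 of Hardt--Simon~\cite{Hardt}, which gives an $\mathcal H^n$ bound on the nodal set of a solution to a second-order elliptic equation with merely Lipschitz-type coefficients, provided (i) the vanishing order is uniformly bounded (this is supplied by Theorem~\ref{main result 1} and the definition of $M_{\widetilde u}(Q)$), and (ii) the cube is small enough relative to the vanishing order and the coefficient oscillation---and the hypothesis $r_Q\le\|\nabla V\|_{L^\infty}^{-1/2}\bigl(M_{\widetilde u}(Q)\bigr)^{-CM_{\widetilde u}(Q)}$ is calibrated exactly to satisfy this. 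So the smallness of $r_Q$ is not there to make an analytic approximation error negligible; it is there to meet the scaling hypotheses of Hardt--Simon. You should replace the analytic-approximation step with a direct appeal to Hardt--Simon's result for non-analytic coefficients.
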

\begin{remark}
Lemma \ref{nodal set of cube near the boundary}  also holds for the case of $V=constant$. Under the case $V=constant$, the side length $r_Q$ of cube $Q$,
 no longer has a size constraint. Comparing  with the
eigenvalue problem, the potential term really bring new difficulties, we can only estimate the measure of the nodal set in the  cube $Q$ with small side length depending on  the gradient of $V$. This is  why the  term $\log \|\nabla V\|_{L^\infty}$ will appears   in the upper bound of the nodal sets (see {\it Step 6} in the proof of Theorem 1.1 at the last of this section).
\end{remark}
\begin{proof}
Let $Q'=\left\{y=(y',y_n,y_{n+1})\ |\ (y',-y_n,y_{n+1})\in Q\right\}$. We extend $\widetilde{u}$ to $Q'$ by
\begin{equation*}
\widetilde{u}(y',y_n,y_{n+1})=-\widetilde{u}(y',-y_n,y_{n+1}),\ y=(y', y_n,y_{n+1})\in Q'.
\end{equation*}
Then in $\bar{Q}\cup\bar{Q'}$, $\widetilde{u}$ satisfies the following equation:
\begin{equation*}
\mathcal{L}^{\#}\widetilde{u}+\widetilde{V}\widetilde{u}=div(L^{\#}\widetilde{u})+\widetilde{V}\widetilde{u}=0,
\end{equation*}
where $L^{\#}=(L^{\#}_{ij})_{(n+1)\times(n+1)}$ is the symmetric positive definite matrix
\begin{equation}\label{eq}
\left\{
\begin{array}{lll}
L^{\#}_{ij}=\delta_{ij},\quad i,j=1,\cdots,n-1,\\
L^{\#}_{ij}=\delta_{ij},\quad i=n+1\ or\ j=n+1,\\
L^{\#}_{in}=L^{\#}_{ni}=-\gamma_{y_i}\cdot(sgn(y_n)),\quad i=1,\cdots,n-1,\\
L^{\#}_{nn}=1+\sum\limits_{i=1}^{n-1}\gamma_{y_i}^2,
\end{array}
\right.
\end{equation}
where $sgn(y_n)$ is the sign function  of $y_n$.  We note that $L^{\#}_{ij}$ is continuous at the origin thanks to (\ref{eq}) and the properties of $\gamma$ in (\ref{require of gamma}).
From Theorem \ref{main result 1} and the definition of  $M_{\widetilde{u}}(Q)$, we see that the vanishing order of $\widetilde{u}$ at any point of $\bar{Q}\cup\bar{Q'}$ is less than or equal to $CM_{\widetilde{u}}(Q)$. This means that for any point $y\in \bar{Q}\cup\bar{Q'}$, if there exists some $d>0$ such that
\begin{equation*}
\limsup\limits_{r\rightarrow0}r^{-d}\|\widetilde{u}\|_{L^{\infty}(B_r(y))}=0,
\end{equation*}
then $d\leq CM_{\widetilde{u}}(Q)$.  Since we have required that
$r_Q\leq\|\nabla V\|^{-\frac{1}{2}}_{L^{\infty}}\left(M_{\widetilde{u}}(Q)\right)^{-CM_{\widetilde{u}}(Q)}$, the conditions of Theorem $1.7$ in \cite{Hardt} are satisfied. Then we have
\begin{equation*}
\mathcal{H}^n\left\{y\in \bar{Q}\cup\bar{Q'}\ |\ \widetilde{u}(y)=0\right\}\leq CM_{\widetilde{u}}(Q)r_Q^n,
\end{equation*}
which yields the desired result directly.
\end{proof}

Next, we shall establish a  propagation property of smallness of solutions of (\ref{rewrite}) in some cube. In fact, there are various results  about the propagation of smallness of the solutions of  linear elliptic equations of second-order (e.g. Theorem 1.7 in \cite{stability}). In the following Lemma,  we will
focus on how the side length of the cube explicitly  depends on the norms of the potential.

\begin{lemma}\label{cauchy}
Let $\mathcal{L}$ be the same operator as in $(\ref{rewrite})$, and  $Q\subseteq D$ be a given cube  with its side length $R\leq R_0\|\nabla V\|_{L^{\infty}}^{-\frac{1}{2}}$, where $R_0$ is a positive constant depending only on $n$ and $\mathcal{L}$. Let $F$ be a given face of $Q$. Let $\widetilde{u}$ be a solution of (\ref{rewrite}), and  assume  that $|\widetilde{u}|\leq 1$ in $Q$. Then there exist positive constants $C$ and $\alpha<1$, depending only on $\mathcal{L}$ and $n$, such that if $|\widetilde{u}|\leq \epsilon$ on $F$, and $|\nabla\widetilde{u}|\leq \epsilon/R$ on $F$, $\epsilon<1$, then
\begin{equation}
\sup\limits_{\frac{1}{2}Q}|\widetilde{u}|\leq C\epsilon^{\alpha}.
\end{equation}
\end{lemma}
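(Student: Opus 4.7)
The plan is to reduce to a scale-invariant Cauchy problem with uniformly bounded coefficients and then apply a standard quantitative unique continuation estimate. Let $y_*$ be a suitable corner of $Q$ and set $v(x) = \widetilde{u}(y_* + Rx)$ for $x$ in the unit cube $Q_1$; the face $F$ then becomes a face $F_1$ of $Q_1$. A direct calculation shows that $v$ satisfies
\begin{equation*}
\mathrm{div}\bigl(L(y_*+Rx)\,\nabla v\bigr) + R^2\,\widetilde V(y_*+Rx)\, v = 0 \quad \text{in } Q_1.
\end{equation*}
Using $\|\widetilde V\|_{L^{\infty}} \leq C\|\nabla V\|_{L^{\infty}}$ from \eqref{v}, together with the hypothesis $R \leq R_0\|\nabla V\|_{L^{\infty}}^{-1/2}$, the rescaled potential satisfies
\begin{equation*}
\bigl\|R^2\,\widetilde V(y_*+R\cdot)\bigr\|_{L^{\infty}(Q_1)} \leq C\,R_0^{2},
\end{equation*}
a constant depending only on $\mathcal L$ and $n$ once $R_0$ is fixed. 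The Cauchy data rescale as $|v|\leq\epsilon$ and $|\nabla v|\leq \epsilon$ on $F_1$ (because the gradient picks up a factor $R$ that cancels the $\epsilon/R$), while $|v|\leq 1$ in $Q_1$ is preserved. The whole problem is now independent of $V$.

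For this rescaled problem I would invoke a standard Cauchy stability (three-ball) estimate for uniformly elliptic equations with Lipschitz principal part and bounded zeroth-order term. Concretely, I would use a Carleman inequality with a convex weight $\phi$ that vanishes on $F_1$ and grows into the interior, together with a cutoff $\chi\in C_c^{\infty}(Q_1)$ equal to one on $\tfrac12 Q_1$. Applying the Carleman estimate to $\chi v$ produces three groups of terms: (i) the contribution of $R^2\widetilde V v$, absorbed by the Carleman parameter thanks to the uniform potential bound, (ii) commutator terms supported where $\nabla \chi\neq 0$, controlled by $\|v\|_{L^{\infty}(Q_1)}\leq 1$, and (iii) boundary terms on $F_1$, bounded by $\epsilon$ via the Cauchy data. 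Optimizing in the Carleman parameter gives an interpolation of the form $\|v\|_{L^{\infty}(\frac12 Q_1)} \leq C\,\epsilon^{\alpha}$ with some $\alpha\in(0,1)$ depending only on $\mathcal L$ and $n$, which is exactly the claim.

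An equivalent path is a chain of three-ball inequalities. From the Cauchy data on $F_1$, a short argument (odd reflection of $v$ across $F_1$ into the even extension of $L$, which remains uniformly elliptic with Lipschitz coefficients, followed by interior $L^{\infty}$ estimates) shows that $|v|\leq C\epsilon$ on a small ball $B_0$ sitting just inside $Q_1$ near $F_1$. A finite sequence of overlapping three-ball inequalities $\|v\|_{L^{\infty}(B_{2r})}\leq C\|v\|_{L^{\infty}(B_r)}^{\theta}\|v\|_{L^{\infty}(B_{3r})}^{1-\theta}$, applied along a chain of balls joining $B_0$ to an arbitrary point of $\tfrac12 Q_1$ inside $Q_1$, propagates the smallness with a loss of exponent at each step; since the number of steps depends only on the geometry of the unit cube, one again obtains $\|v\|_{L^{\infty}(\frac12 Q_1)}\leq C\epsilon^{\alpha}$.

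The main obstacle is the boundary step: the Cauchy data control only $v$ and $\nabla v$ on $F_1$, so to feed either the Carleman or three-ball machine one must first convert this into a genuine $L^{\infty}$ bound of size $\epsilon$ on a full half-ball resting on $F_1$. The cleanest way is via the odd reflection across $F_1$ described above, which is admissible precisely because the rescaled equation has uniformly bounded coefficients and the coefficients of $\mathcal L$ extend in an even manner (as in the proof of Lemma~\ref{nodal set of cube near the boundary}); once this reduction is in place the rest of the argument is a routine chain with constants depending only on $n$ and $\mathcal L$, and the need for $R_0$ to be small comes solely from absorbing the rescaled potential $R^2\widetilde V$ into the Carleman parameter.
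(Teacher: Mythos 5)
Your strategy — rescale to a unit cube so that the potential becomes a zeroth-order term of size $O(R_0^2)$, then run a Carleman/three-ball Cauchy stability argument — is a genuinely different route from the paper's and is sound in outline for the first (Carleman) variant, provided you are willing to prove or cite a quantitative Cauchy stability estimate that explicitly allows a bounded zeroth-order term. The paper avoids this bookkeeping entirely by a cleaner reduction: let $w$ solve $\mathcal{L}w+\widetilde V w=0$ in $Q$, $w=1$ on $\partial Q$. Since $\widetilde V\le 0$ and $\|\widetilde V\|_{L^\infty}\le C\|\nabla V\|_{L^\infty}$, the a priori $L^\infty$ and global Schauder bounds give $|w-1|\le CR^2\|\nabla V\|_{L^\infty}\le 1/2$ and $|\nabla w|\lesssim R^{-1}$ once $R\le R_0\|\nabla V\|_{L^\infty}^{-1/2}$ with $R_0$ small. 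Setting $v=\widetilde u/w$ kills the potential: $v$ solves $\mathrm{div}(w^2L\nabla v)=0$, a pure divergence-form equation, with $|v|\le 2\epsilon$ and $|\nabla v|\le 6\epsilon/R$ on $F$. One then directly applies Theorem~1.7 of \cite{stability} (which is stated for homogeneous equations without a potential) and transfers the conclusion back via $|\widetilde u|\le\tfrac32|v|$. This is exactly where the smallness of $R_0$ is used — not to absorb the potential into a Carleman weight, but to guarantee $w\approx 1$ so the division is harmless.

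There is, however, a concrete gap in your second (``equivalent path'') variant. You propose to convert the Cauchy data on $F_1$ into an $L^\infty$ bound of size $\epsilon$ on a half-ball by \emph{odd reflection of $v$ across $F_1$}. This does not work here: in Lemma~\ref{first dividing lemma} the face $F$ is an interior hyperplane $\{y_n=y_n^*\}$, not the boundary $\Gamma$, and $\widetilde u$ (or $v$) does not vanish on $F$ — you only know $|v|\le\epsilon$ and $|\nabla v|\le\epsilon$ there. Odd reflection of a function that is merely small but nonzero on the interface produces a discontinuous function and is not a weak solution of any elliptic equation, so the ``interior $L^\infty$ estimates'' that you want to apply afterward have no valid starting object. (The reflection argument is legitimate in Lemma~\ref{nodal set of cube near the boundary}, but there $\widetilde u$ genuinely vanishes on $\Gamma$.) To make the chained three-ball argument work you would instead need a boundary Cauchy-stability step that converts inhomogeneous Cauchy data into interior smallness — which is precisely the content of the result cited by the paper, and is what your first (Carleman) variant is implicitly reproving. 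In short: keep the Carleman route and drop the reflection sub-step, or, more efficiently, adopt the paper's $v=\widetilde u/w$ reduction so that you can invoke the homogeneous Cauchy stability theorem without modification.
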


\begin{proof}
Let $w$ be the unique solution to the following equation:
\begin{equation}\label{equ-w}
\left\{
\begin{array}{lll}
\mathcal{L}w+\widetilde{V}w=0,\quad \mbox{in}\  Q,\\[1mm]
w=1,\quad \mbox{on}\ \partial Q.
\end{array}
\right.
\end{equation}
Denoting $\bar{w}=w-1$, we have
\begin{equation}\label{bar}
\left\{
\begin{array}{lll}
\mathcal{L}\bar{w}+\widetilde{V}\bar{w}=-\widetilde{V},\quad \mbox{in}\  Q,\\[1mm]
\bar{w}=0,\quad \mbox{on}\  \partial Q,
\end{array}
\right.
\end{equation}
By standard a-priori bounds in $L^{\infty}$, see Theorem 8.16 in \cite{GT}, and noting that $\widetilde{V}\leq 0$,
we have
\begin{equation*}
\|\bar{w}\|_{L^{\infty}(Q)}\leq CR^2\|\widetilde{V}\|_{L^{\infty}}\leq CR^2\|\nabla V\|_{L^{\infty}},
\end{equation*}
where we used the fact that $\|\widetilde{V}\|_{L^{\infty}}\leq C\|\nabla V\|_{L^{\infty}}$.
So by choosing $R\leq R_0\|\nabla V\|^{-\frac{1}{2}}_{L^{\infty}}$ with $R_0>0$ small enough, there holds
\begin{equation}\label{w1}
\|\bar{w}\|_{L^{\infty}(Q)}\leq \frac{1}{2}.
\end{equation}
Furthermore, applying  a global Schauder type estimate to (\ref{bar}), see Theorem 8.33 in  \cite{GT},
one obtains,
\begin{align}\label{w2}
\|\nabla\bar{w}\|_{L^{\infty}(Q)}&\leq C\left\{R^{-1}\|\bar{w}\|_{{L^{\infty}}(Q)}+R\|\widetilde{V}(1-\bar{w})\|_{{L^{\infty}}(Q)}\right\}\nonumber\\[2mm]
&\leq C\left(R^{-1}+R\|\widetilde{V}\|_{{L^{\infty}}(Q)}\right)\nonumber\\[2mm]
&\leq C \left(R^{-1}+R_0\|\nabla V\|_{{L^{\infty}}}^{\frac{1}{2}}\right).
\end{align}
Let $\widetilde{u}=vw$.
Then $v=\widetilde{u}/w$ satisfies the following equation in $Q$:
\begin{equation}\label{equ-v}
\mathcal{L}^*v=div(L^*\nabla v)=0,
\end{equation}
where $L^*=w^2L$. Through some direct calculation and the assumption of $R\leq R_0\|\nabla V\|^{-\frac{1}{2}}_{L^{\infty}}$,  in virtue of (\ref{w1}) and (\ref{w2}), one has
\begin{equation}
|v|\leq 2\epsilon\quad \mbox{on}\ F, \quad \mbox{and}\quad  |\nabla v|\leq 6\epsilon/R \quad \mbox{on}\ F.
\end{equation}
Indeed,
\begin{align*}
|\nabla v|&\leq \left|\frac{\nabla \widetilde{u}}{w}\right|+|\widetilde{u}|\left|\frac{\nabla w}{w^2}\right|\\[2mm]
&\leq\frac{C\epsilon} {R}.
\end{align*}
Then, applying Theorem 1.7 in \cite{stability} to the case of homogeneous equation (\ref{equ-v}), we infer
\begin{equation*}
\sup\limits_{\frac{1}{2}Q}|v|\leq C\epsilon^{\alpha},
\end{equation*}
where $C$ and $\alpha$ both are positive constants depending only on $n$ and $\mathcal{L}$.
This completes the proof  thanks to  $|\widetilde{u}|=|vw|\leq \frac{3}{2}|v|$.
\end{proof}

Given an Euclidean $(n+1)$-dimensional cube $Q$.
  Let $A\geq1$ be an odd integer, we divide $Q$ into $A^{n+1}$ equal subcubes.
   By using the small data propagation,
we prove the following dividing lemma  to estimate the doubling indexes of subcubes.

\begin{lemma}\label{first dividing lemma}
Given an Euclidean $(n+1)$-dimensional cube $Q\subseteq D$, which side length is  $R$, and assume that one face of $Q$ are parallel to $\Gamma$.
  Let $A\geq1$ be an odd integer, we partition $Q$ into $A^{n+1}$ equal subcubes.
Let $q_{i,0}$ be any layer of subcubes of $Q$.
There exist $A_0$, $R_0$, and $M_0>1$ depending only on $n$ and $\mathcal{L}$, such that if $A\geq A_0$, $M_{\widetilde{u}}(Q)\geq M_0$, $R\leq R_0\|\nabla V\|_{L^{\infty}}^{-\frac{1}{2}}$, then there exists at least one cube of $q_{i,0}$ satisfying
\begin{equation*}
M_{\widetilde{u}}(q_{i,0})\leq \frac{M_{\widetilde{u}}(Q)}{2}.
\end{equation*}
\end{lemma}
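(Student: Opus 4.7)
The plan is to argue by contradiction. Suppose every subcube $q\subseteq q_{i,0}$ satisfies $M_{\widetilde u}(q)>M/2$, where $M:=M_{\widetilde u}(Q)$, and normalize so that $\sup_{Q}|\widetilde u|=1$. The proof has three stages: (i) convert the high-doubling-index assumption on each $q$ in the layer into uniform exponential smallness $|\widetilde u|\le 2^{-c''M}$ throughout $q_{i,0}$; (ii) use the Cauchy-type Lemma~\ref{cauchy} to propagate this smallness from the layer into an entire half of $Q$; (iii) derive a contradiction with $\sup_Q|\widetilde u|=1$ via the reverse doubling inequality~(\ref{doubling inequality2}) and the changing-center estimate of Lemma~\ref{changing center}.

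For stage~(i), each $q$ has side length $R/A$, so $M_{\widetilde u}(q)>M/2$ produces, via the definition~(\ref{M}), some $y_q\in q$ and some $\rho_q\lesssim R/A$ with
\[
\|\widetilde u\|_{L^\infty(B_{\rho_q/2}(y_q)\cap D)}^2 \le 2^{-M/2}\|\widetilde u\|_{L^\infty(B_{\rho_q}(y_q)\cap D)}^2.
\]
Iterating this doubling drop through the $\sim\log_2 A$ dyadic scales between $\rho_q$ and $R$, using the almost-monotonicity of $M_{\widetilde u}$ in the radius from~(\ref{monotonicity of M in y space}) together with an $M_{\widetilde u}$-level changing-center argument analogous to Lemma~\ref{changing center} for $N_{\widetilde u}$, yields $\|\widetilde u\|_{L^\infty(q)}\le 2^{-cM\log_2 A + C\log_2 A}\le 2^{-c''M}$, once $M\ge M_0$ is large enough to absorb the additive constant from Lemma~\ref{doubling index on boundary} and $A\ge A_0$ is large; here $c''=c''(A)$ can be made arbitrarily large by enlarging $A$. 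Taking the union over all $q\subseteq q_{i,0}$ gives $|\widetilde u|\le 2^{-c''M}$ throughout the layer.

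For stage~(ii), pick a cube $Q'\subseteq Q$ lying on one side of $q_{i,0}$ whose face $F$ sits strictly inside the layer at distance $\gtrsim R/A$ from $\partial q_{i,0}$. Interior Schauder estimates for~(\ref{rewrite}) then promote the layer-smallness to $|\nabla\widetilde u|\le C(A/R)\cdot 2^{-c''M}$ on $F$, so with $\epsilon:=C 2^{-c''M}$ both $|\widetilde u|\le\epsilon$ and $R|\nabla\widetilde u|\le\epsilon$ on $F$. Since the side length of $Q'$ is $\le R\le R_0\|\nabla V\|_{L^\infty}^{-1/2}$, Lemma~\ref{cauchy} applies and gives $\sup_{\frac12 Q'}|\widetilde u|\le C\epsilon^\alpha = C 2^{-\alpha c''M}$. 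For stage~(iii), $\sup_Q|\widetilde u|=1$ is attained somewhere in $Q$; transporting the attainment into $\frac12 Q'$ through Lemma~\ref{changing center} combined with~(\ref{doubling inequality2}) forces $\sup_{\frac12 Q'}|\widetilde u|\ge 2^{-C_1 M}$ for a constant $C_1$ depending only on $n$ and $\mathcal{L}$. Choosing $A_0$ so that $\alpha c''(A_0)>C_1$ makes the two bounds on $\sup_{\frac12 Q'}|\widetilde u|$ incompatible for every $M\ge M_0$, which is the desired contradiction. \emph{The main obstacle} is stage~(i): the hypothesis $M_{\widetilde u}(q)>M/2$ supplies only a single high-doubling-index pair $(y_q,\rho_q)$ inside $q$, and upgrading this to uniform exponential smallness on all of $q$ demands a careful changing-center argument at the level of $M_{\widetilde u}$ together with precise bookkeeping of the additive constants in Lemma~\ref{doubling index on boundary}; these losses are precisely what dictate the explicit thresholds $A_0$ and $M_0$ in the statement.
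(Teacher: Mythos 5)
Your proposal follows essentially the same contradiction argument as the paper: convert high doubling indices in every subcube of the layer into uniform exponential smallness of $\widetilde u$ and $\nabla\widetilde u$ on a hyperplane inside the layer (via iterated doubling over $\sim\log_2 A$ dyadic scales and elliptic estimates for $v=\widetilde u/w$), propagate this smallness across a cube via the Cauchy-type Lemma~\ref{cauchy}, and contradict a lower bound on $\sup$ that comes from the normalization and the doubling estimate when $A_0$ and $M_0$ are chosen large. The only cosmetic differences are the normalization ($\sup_{\frac18 B\cap D}|\widetilde u|=1$ in the paper), the use of $W^{2,p}$ rather than Schauder estimates for the gradient bound, and the fact that the paper's stage~(i) is handled by fixed-center doubling iteration plus the almost-monotonicity~\eqref{monotonicity of M in y space} with containments rather than any genuine changing-center step at the level of $M_{\widetilde u}$.
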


\begin{proof}
Fix a layer of cubes $q_{i,0}$. Let $y^{*}$ be the center of this layer of cubes. Also let $B$ be the ball centered at $y^{*}$ with radius $R$.
Let $M=\frac{M_{\widetilde{u}}(Q)}{2}\geq \frac{M_0}{2}$. We prove this lemma by contradiction. Without loss of generality, we assume that
$\sup_{\frac{1}{8}B\cap D}|\widetilde{u}|=1.$
By the contradiction assumptions, for any subcube $q_{i,0}$, there exist $y_i\in q_{i,0}$ and $r_i<10(n+1)diam(q_{i,0})$ such that  $M_{\widetilde{u}}(y_i,r_i)>M$. Since $2q_{i,0}\subseteq B_{\sqrt{n+1}\frac{2R}{A}}(y_i)$ and $B_{R/32}(y_i)\subseteq B$, in virtue of (\ref{ball in y and z space}) and  (\ref{monotonicity of M in y space}), we have
\begin{align*}
\sup\limits_{2q_{i,0}}|\widetilde{u}|&\leq\sup\limits_{B_{\sqrt{n+1}\frac{2R}{A}}(y_i)\cap D}|\widetilde{u}|\\
&\leq \sup\limits_{\Phi\left(B_{\sqrt{n+1}\frac{4R}{A}}(z_i)\right)\cap D}|\widetilde{u}|\\
&\leq\sup\limits_{\Phi\left(B_{R/64}(z_i)\right)\cap D}|\widetilde{u}| 2^{-CM\log_2 A}\\
&\leq \sup\limits_{B_{R/32}(y_i)\cap D}|\widetilde{u}|2^{-CM\log_2 A}\\
&\leq\sup\limits_{B\cap D}|\widetilde{u}|2^{-CM\log_2 A}\\
&\leq\sup\limits_{\Phi(B_{2R}(z^*)\cap D)}|\widetilde{u}|2^{-CM\log_2 A}\\
&\leq\sup\limits_{\Phi\left(B_{R/16}(z^*)\right)\cap D}|\widetilde{u}|2^{-CM\log_2 A}\\
&\leq\sup\limits_{\frac{1}{8}B\cap D}|\widetilde{u}|2^{-CM\log_2 A}\\
&\leq 2^{-CM\log_2 A},
\end{align*}
where $y_i=\Phi(z_i)$, and $y^*=\Phi(z^*)$.
Then by the standard $W^{2,p}$ elliptic estimate (see Theorem 9.11 in \cite{GT})
to $v=\widetilde{u}/w$, where $v$ satisfies equation (\ref{equ-v}), one has
\begin{align}
\sup\limits_{q_{i,0}}|\nabla \widetilde{u}|&\leq C\left(\sup\limits_{q_{i,0}}|\nabla v|+R^{-1}\sup\limits_{q_{i,0}}|v|\right)\leq CAR^{-1}\sup\limits_{2q_{i,0}}|v|\nonumber
\\&\leq CAR^{-1}\sup\limits_{2q_{i,0}}|\widetilde{u}|\nonumber
\\&\leq R^{-1}2^{-CM\log_2 A}.
\end{align}
So  $|\widetilde{u}|\leq \epsilon $ and $|\nabla\widetilde{u}|\leq R^{-1}\epsilon$ on $B\cap\{y_n=y^{*}_n\}$, where $\epsilon=2^{-CM\log_2 A}$.

Let $q\subseteq Q$ be a cube with side $\frac{R}{16\sqrt{n+1}}$, whose faces are parallel to $\Gamma$ and centered at either $y^{*}+\left(0,\frac{3}{8}R,0\right)$ or $y^{*}-\left(0,\frac{3}{8}R,0\right)$ depending on the layer belonging to uphalf or lowerhalf of the cube $Q$, denoting by $y_0$.  Then from Lemma $\ref{cauchy}$,
\begin{equation}
\sup\limits_{q}|\widetilde{u}|
\leq 2^{-C_1M\alpha\log_2 A}.
\end{equation}
Hence from the fact that $\frac{1}{8}B\subseteq B_{\frac{R}{2}}(y_0)$ and $B_{\frac{R}{32\sqrt{n+1}}}(y_0)\subseteq q$, it holds
\begin{equation}
\frac{\sup\limits_{\Phi(B_{R}(z_0))\cap D}|\widetilde{u}|^2}{\sup\limits_{\Phi\left(B_{\frac{R}{64\sqrt{n+1}}}(z_0)\right)\cap D}|\widetilde{u}|^2}\geq\frac{\sup\limits_{B_{\frac{R}{2}}(y_0)\cap D}|\widetilde{u}|^2}{\sup\limits_{B_{\frac{R}{32\sqrt{n+1}}}(y_0)\cap D}|\widetilde{u}|^2}\geq 2^{C_1M\alpha\log_2 A}.
\end{equation}
On the other hand, from the form $(\ref{monotonicity of M in y space})$,
\begin{equation}
\frac{\sup\limits_{\Phi\left(B_{R}(z_0)\right)\cap D}|\widetilde{u}|^2}{\sup\limits_{\Phi\left(B_{\frac{R}{64\sqrt{n+1}}}(z_0)\right)\cap D}|\widetilde{u}|^2}\leq
2^{CM_{\widetilde{u}}(y_0,R/2)+C}.
\end{equation}
So
\begin{equation}
M_{\widetilde{u}}(Q)\geq M_{\widetilde{u}}(y_0,R/2)\geq C_2M\alpha\log_2 A-C.
\end{equation}
 Choosing $A\geq A_0$ and $M\geq M_0/2$ with positive constants $M_0$ and $A_0$ large enough such that $M_{\widetilde{u}}(Q)> 2M$ holds. This is a contradiction.
\end{proof}

Now we are ready to prove our main result.

\textbf{Proof of  Theorem $\ref{main result 2}$.}\
{\it Step $1$.} Let $Q=[0, R]^{n}\times[-\frac{R}{2}, \frac{R}{2}]\subseteq D$ be a given cube with $R\leq R_0\|\nabla V\|_{L^{\infty}}^{-\frac{1}{2}}$. Also assume that $M_{\widetilde{u}}(Q)$ is large enough such that $M_{\widetilde{u}}(Q)>M_0$, where $M_0>1$ is the same positive constant as in Lemma $\ref{first dividing lemma}$.
Divide $Q$ into $A^{n+1}$ equal subcubes, where $A$ is the same  integer as in Lemma $\ref{first dividing lemma}$. We use $q_{i_1j_1}$, $i_1=1,2,\cdots,\ A$ and $j_1=1,\cdots,A^n$ to denote the first  generation subcubes. Here the subscript $i_1$ means  $i_1$-th layer of subcubes from top to bottom, i.e., $q_{1j_1}$ is a subcube on the top layer, and $q_{Aj_1}$ is a subcube on the bottom layer. There are $A^n$ first  generation subcubes in each layer.  Then, according to Lemma $\ref{first dividing lemma}$, we see that  there exists at least one first generation subcube in each layer where the  doubling index of $\widetilde{u}$ is less than or equal to $\frac{M_{\widetilde{u}}(Q)}{2}$.  Let $A'=A-10(n+1)$,
 we can apply  Lemma $\ref{nodal set of cube}$  to the  first  generation subcubes $q_{i_1j_1}$ in each $i_1=1,2,\cdots, A'$ layer,
\begin{align}\label{near boundary 1}
\sum\limits_{j_1=1}^{A^n}\mathcal{H}^n\left\{y\in q_{i_1j_1} \ |\ \widetilde{u}(y)=0\right\}
&\leq C \sum\limits_{j_1=1}^{A^n} M_{\widetilde{u}}(q_{i_1j_1})\left(\frac{R}{A}\right)^{n}\nonumber\\[2mm]
&\leq C\left[\frac{M_{\widetilde{u}}(Q)}{2}R^{n}A^{-n}+(A^n-1)M_{\widetilde{u}}(Q)R^nA^{-n}\right]\nonumber\\[2mm]
&=CR^n\left(1-\frac{1}{2}A^{-n}\right)M_{\widetilde{u}}(Q)\nonumber\\[2mm]
&=:C\kappa R^nM_{\widetilde{u}}(Q),
\end{align}
where $\kappa=1-\frac{1}{2}A^{-n}$.
Since $A'<A$, we establish the upper bounded of measure of the nodal set in
the  first  generation subcubes in  the top  $A'$ layers as
\begin{equation*}
\sum\limits_{i_1=1}^{A'}\sum\limits_{j_1=1}^{A^n}\mathcal{H}^n\left\{y\in q_{i_1j_1} \ |\ \widetilde{u}(y)=0\right\}\leq CAR^n\kappa M_{\widetilde{u}}(Q).
\end{equation*}

{\it Step $2$.}
For the remaining $10(n+1)$ layers of the  first  generation subcubes, we divide each $q_{i_1j_1}$ ($i_1=A'+1,\cdots,A$,  $j_1=1\cdots,A^n$) into $A^{n+1}$ equal subcubes of the second generation,
denoting them   by $q_{i_1j_1,i_2j_2}$. Here $i_2$ also means the $i_2$-th layer  from top to bottom, just the same as $i_1$, $j_2$ is similar to $j_1$. Similarly,
for each layer, there are $A^{2n}$   second  generation subcubes, and the side length of them is $\frac{R}{A^2}$.
In order to estimate the upper bound of the measure of the nodal set  of such subcubes,
from Lemma $\ref{first dividing lemma}$,  we  see that  the number of the second  generation subcubes whose doubling index is less than or equal to $M_{\widetilde{u}}(Q)/4$ is at least $1$; the number of the second  generation subcubes whoses doubling index between $M_{\widetilde{u}}(Q)/4$ and $M_{\widetilde{u}}(Q)/2$ is $2(A^n-1)$; the number of the second  generation subcubes whose doubling index is $M_{\widetilde{u}}(Q)$ is at most $(A^n-1)^2$ in each layer.
Then, applying Lemma $\ref{nodal set of cube}$ again, for each layer,
\begin{align*}
&\sum\limits_{j_1=1}^{A^n}\sum\limits_{j_2=1}^{A^n}\mathcal{H}^n\left\{y\in q_{i_1j_1,i_2j_2} \ |\ \widetilde{u}(y)=0\right\}\nonumber
\\[2mm]
&\leq C\left\{\frac{M_{\widetilde{u}}(Q)}{4}\left(\frac{R}{A^2}\right)^{n}+2(A^{n}-1)\cdot\frac{M_{\widetilde{u}}(Q)}{2}\left(\frac{R}{A^2}\right)^{n}
+\left(A^{n}-1\right)^2\cdot M_{\widetilde{u}}(Q)\left(\frac{R}{A^2}\right)^{n}\right\}\nonumber\\[2mm]
&=CR^{n}M_{\widetilde{u}}(Q)(1-\frac{1}{2}A^{-n})^2\nonumber\\[2mm]
&=CR^{n}\kappa^2M_{\widetilde{u}}(Q).
\end{align*}
Therefore, for  second generation subcubes in all layers of $i_1=A'+1,\cdots, A-1$, $i_2=1,2,..., A$  and $i_1=A, i_2=1,2,\cdots, A'$, that is, despite the bottom $10(n+1)$ layers of the second generation subcubes,
 there holds
\begin{align}\label{fin1}
&\sum\limits_{i_1=A'+1}^{A-1}\sum\limits_{j_1=1}^{A^n}\sum\limits_{i_2=1}^{A}\sum\limits_{j_2=1}^{A^n}\mathcal{H}^n\left\{y\in q_{i_1j_1,i_2j_2} \ |\ \widetilde{u}(y)=0\right\}
+\sum\limits_{i_2=1}^{A'}\sum\limits_{j_1=1}^{A^n}\sum\limits_{j_2=1}^{A^n}\mathcal{H}^n\left\{y\in q_{Aj_1,i_2j_2} \ |\ \widetilde{u}(y)=0\right\}\nonumber
\\&\leq CAR^n\kappa^2M_{\widetilde{u}}(Q).
\end{align}
Let $k_0=[\log_2\frac{M_{\widetilde{u}}(Q)}{M_0}]+1$.
Repeating the same arguments until the $k_0$-th partition. We note that, after the $k_0$-th partition, there may exist at least one subcube whose doubling index is less than or equal to $M_0$. Then Lemma $\ref{first dividing lemma}$ cannot be used again for these subcubes.

For the  $k$ $(k\geq k_0)$-th  partitions, the doubling index of some  $k$-th generation subcubes are less than or equal to $M_0$, and the side length of the  $k$-th generation subcubes is  $RA^{-k}\leq RA^{-k_0}$. Moreover, if we choose $A=A_0$ large enough such that
\begin{equation}\label{require of A0}
A_0\geq M_0^{CM_0}.
\end{equation}
Then  the side length of the $k$-th  generation subcubes $RA^{-k}$    is small in the following sense,
\begin{align}\label{smallr}
RA^{-k}&\leq RA^{-k_0}\leq CR_0\|\nabla V\|^{-\frac{1}{2}}_{L^{\infty}}\left(\frac{M_0}{2M_{\widetilde{u}}(Q)}\right)^{\log_2A}
\nonumber\\[2mm]
&\leq CR_0\|\nabla V\|^{-\frac{1}{2}}_{L^{\infty}}\left(
\frac{1}{2}\right)^{CM_0\log_2M_0}\nonumber\\[2mm]
&\leq  C\|\nabla V\|^{-\frac{1}{2}}_{L^{\infty}}M_0^{-CM_0}.
\end{align}
For the subcubes whose doubling index is greater than $M_0$, we divide them again and continue the above process.

{\it Step $3$.}
For a  $k$-th generation subcube $\widehat{Q}$  whose doubling index is less than or equal to $M_0$,  denote the side length of $\widehat{Q}$  be $r_{\widehat{Q}}$,  we claim that
\begin{equation}\label{both}
\mathcal{H}^n\left(\{y\in \widehat{Q}\ |\ \widetilde{u}(y)=0\}\right)\leq  CAM_0r_{\widehat{Q}}^{n}.
\end{equation}
We shall prove this claim  in two cases.

{\it Case 1.}\ If  $\widehat{Q}$  is not on the bottom layer,
 we divide $\widehat{Q}$ into $A^{n+1}$ subcubes $\widehat{Q}_i$. The doubling index of every  subcubes $\widehat{Q}_i$ is less than or equal to $M_0$. Therefore,   applying Lemma $\ref{nodal set of cube}$ in every subcubes $\widehat{Q}_i$, one has
\begin{align*}
&\mathcal{H}^n\left(\{y\in \widehat{Q}\ |\ \widetilde{u}(y)=0\}\right)\leq CA^{n+1} (M_0+1)\left(\frac{r_{\widehat{Q}}}{A}\right)^{n}\nonumber\\[2mm]
&\leq CAM_0r_{\widehat{Q}}^{n}.
\end{align*}

{\it Case 2.} \ If  $\widehat{Q}$ is on the last layer, that is one face of $\widehat{Q}$ lies on $\Gamma$, we have
\begin{align}\label{case2}
&\mathcal{H}^n\left(\{y\in \widehat{Q}\ |\ \widetilde{u}(y)=0\}\right)\leq \mathcal{H}^n\left(\left\{y\in (B^{y}_{\frac{\sqrt{n+1}}{2}r_{\widehat{Q}}}(y_0))_+\ |\ \widetilde{u}(y)=0\right\}\right)\nonumber
\\&\leq C\mathcal{H}^n\left(\left\{z\in B^{z}_{\tau_1^{-1}\frac{\sqrt{n+1}}{2}r_{\widehat{Q}}}(z_0)\cap\widetilde{\Omega}\ |\ \bar{u}(z)=0\right\}\right),
\end{align}
where $y_0$ is the center of the face on $\Gamma$ of $\widehat{Q}$, $z_0=\Psi(y_0)$, $r_{\widehat{Q}}$ is the side length of $\widehat{Q}$. Then, by translating $z_0$ to the origin point and straight $\partial\Omega$  in a neighborhood of $z_0$,  we get a new function $\widetilde{u}^*(y^*)$ on $y^*-$space. Therefore,
\begin{align}\label{y1}
&\mathcal{H}^n\left(\{z\in B^{z}_{\tau_1^{-1}\frac{\sqrt{n+1}}{2}r_{\widehat{Q}}}(z_0)\cap\widetilde{\Omega}\ |\ \bar{u}=0\}\right)\nonumber
\\&\leq C\mathcal{H}^n\left(\{y^*\in (B^{y^*}_{\tau_2\tau_1^{-1}\frac{\sqrt{n+1}}{2}r_{\widehat{Q}}}(0)_+\ |\ \widetilde{u}^*=0\}\right)\nonumber
\\&\leq C\mathcal{H}^n\left(\{y^*\in q^*\ |\ \widetilde{u}^*=0\}\right),
\end{align}
where
$q^*$ is the cube in $y^{*}$-space with side length $r^*=2\tau_2\tau_1^{-1}\frac{\sqrt{n+1}}{2}r_{\widehat{Q}}$, and  the center of the face on the boundary is $0$.
Noting that $r^*\leq Cr_{\widehat{Q}}\leq CRA^{-k}$ is small in the sense of (\ref{smallr}), then we can  apply Lemma $\ref{nodal set of cube near the boundary}$  to $\widetilde{u}^*$ in $q^{*}$,
\begin{align}\label{y2}
&\mathcal{H}^n\left(\{y^*\in q^*\ |\ \widetilde{u}^*=0\}\right)\nonumber\\
&\leq CM_0\cdot\left(2\tau_2\tau_1^{-1}\frac{\sqrt{n+1}}{2}r_{\widehat{Q}}\right)^n\nonumber
\\&\leq CM_0r_{\widehat{Q}}^n.
\end{align}
Putting (\ref{y2}) back to (\ref{y1}) and (\ref{case2}) implies that
\begin{equation*}
\mathcal{H}^n\left(\{y\in \widehat{Q}\ |\ \widetilde{u}(y)=0\}\right)\leq  CM_0r_{\widehat{Q}}^n.
\end{equation*}
Therefore, in both case, we have proved the claim (\ref{both}).

 For the $k$-th  partition, we only compute measures of nodal sets for the  $k$-th generation subcubes of at most $10(n+1)A$ bottom layers. On each layer, the number of the $k$-th generation subcubes whose doubling indexes are less than or equal to $M_0$ is less than or equal to $\mathcal{C}_k^{k_0}(A^n-1)^{k-k_0}$. So on the $k-$th ($k\geq k_0$) partition, the measure of the nodal set of  the union of these $k$-th generation subcubes is not large than
\begin{equation}\label{fin2}
CA^2\mathcal{C}_k^{k_0}(A^n-1)^{k-k_0}M_0\left(\frac{R}{A^{k}}\right)^{n}.
\end{equation}

{\it Step 4.}\ Repeating the above  partition procedure infinitely many times, for the class of $k$-th subcubes whose doubling index are large than $M_0$, we can estimate the nodal set  of the union of these $k$-th generation subcubes (simply denoted by $Q_{k}$) similarly as in (\ref{fin1}), that is
\begin{equation}
\mathcal{H}^{n} \left\{y\in Q_{k}\ |\ \widetilde{u}(y)=0\right\}
\leq CAR^n\kappa^kM_{\widetilde{u}}(Q).
\end{equation}

On the other hand, for the class of $k$-th ($k\geq k_0$) subcubes whose doubling index is less than or equal to $M_0$, we can estimate the nodal set  of the union of these $k$-th generation subcubes (simply denoted by $Q'_{k}$)  as in (\ref{fin2}), that is
\begin{equation}
\mathcal{H}^{n} \left\{y\in Q'_{k}\ |\ \widetilde{u}(y)=0\right\}
\leq CA^2\mathcal{C}_k^{k_0}(A^n-1)^{k-k_0}M_0\left(\frac{R}{A^{k}}\right)^{n}.
\end{equation}
Therefore,
noting that $M_0\leq\frac{M_{\widetilde{u}}(Q)}{2^{k_0-1}}$ and  $\kappa=1-\frac{1}{2}A^{-n}$, it holds
\begin{align*}
&\mathcal{H}^n\left\{y\in Q\ |\ \widetilde{u}(y)=0\right\}\nonumber
\\&\leq \sum\limits_{k=1}^{\infty}CAR^nM_{\widetilde{u}}(Q)\kappa^k
+\sum\limits_{k=k_0}^{\infty}CA^2\mathcal{C}_k^{k_0}(A^n-1)^{k-k_0}M_0\left(\frac{R}{A^{k}}\right)^{n}
\nonumber
\\&\leq CAR^nM_{\widetilde{u}}(Q)\sum\limits_{k=1}^{\infty}\kappa^k
+CA^2R^n\frac{M_{\widetilde{u}}(Q)}{2^{k_0}}\sum\limits_{k=k_0}^{\infty}\mathcal{C}_k^{k_0}(1-A^{-n})^{k-k_0}A^{-nk_0}\nonumber
\\&\leq CAR^nM_{\widetilde{u}}(Q)\sum\limits_{k=1}^{\infty}\kappa^k+CA^2R^nM_{\widetilde{u}}(Q)\sum\limits_{k=k_0}^{\infty}\mathcal{C}_k^{k_0}\left(\frac{1}{2}A^{-n}\right)^{k_0}(1-A^{-n})^{k-k_0}\nonumber
\\&\leq CAR^nM_{\widetilde{u}}(Q)\sum\limits_{k=1}^{\infty}\kappa^k+CA^2R^nM_{\widetilde{u}}(Q)\sum\limits_{k=k_0}^{\infty}\kappa^k\nonumber
\\&\leq CAR^nM_{\widetilde{u}}(Q)\frac{\kappa}{1-\kappa}+CA^2R^nM_{\widetilde{u}}(Q)\frac{\kappa^{k_0}}{1-\kappa}\nonumber
\\&\leq CR^nA^{n+2}M_{\widetilde{u}}(Q)\nonumber
\\&\leq CR^nA^{n+2}(\sqrt{\lambda}+1),
\end{align*}
where we used $M_{\widetilde{u}}(Q)\leq C(\sqrt{\lambda}+1)$ in the last inequality.

{\it Step 5.} Using the map $\Psi$ mapping  $y$ back to $z$, one obtains that
\begin{equation*}
\mathcal{H}^n\left\{z\in\Psi(Q)\ |\ \bar{u}(z)=0\right\}\leq CR^nA^{n+2}(\sqrt{\lambda}+1).
\end{equation*}
Now denote
\begin{equation}\label{partial neighborhood}
(\partial\Omega)_{R}=\{x\in\Omega\ |\ dist(x,\partial\Omega)\leq R\}.
\end{equation}
By covering $(\partial\Omega)_{R}\times(-R/2,R/2)$ with finite many $\{\Psi(Q_i)\}$, whose number is $CR^{-(n-1)}$, there holds
\begin{equation}\label{boundary nodal set}
\mathcal{H}^n\left\{z\in(\partial\Omega)_{R}\times(-R/2,R/2)\ |\ \bar{u}(z)=0\right\}\leq CA^{n+2}R(\sqrt{\lambda}+1).
\end{equation}
Thus from $\bar{u}(x,x_{n+1})=u(x)e^{\sqrt{\lambda}x_{n+1}}$ and $e^{\sqrt{\lambda}x_{n+1}}\neq0$,
\begin{equation*}
\mathcal{H}^{n-1}\left\{x\in(\partial\Omega)_{R}\ |\ u(x)=0\right\}\leq CA^{n+2}(\sqrt{\lambda}+1).
\end{equation*}

{\it Step 6.}
Now in the $y-$space, we consider another cube $Q^1=[R,3R]^{n}\times[-R,R]$. This is a cube above the cube $Q$ we discussed before. We divide $Q^1$ into $T^{n+1}=(100(n+1))^{n+1}$ equal subcubes. For each subcube, its doubling index is less than or equal to $C(\sqrt{\lambda}+1)$. Then from Lemma \ref{nodal set of cube}, one has
\begin{equation*}
\mathcal{H}^n\left\{y\in Q^1\ |\ \widetilde{u}(y)=0\right\}\leq CT^{n+1}(\sqrt{\lambda}+1)\left(\frac{R}{T}\right)^n\leq CTR^n(\sqrt{\lambda}+1).
\end{equation*}
Using the map $\Psi$  mapping $y$  back to $z$ again,
\begin{equation*}
\mathcal{H}^n\left\{z\in\Psi(Q^1)\ |\ \bar{u}(z)=0\right\}\leq CTR^n(\sqrt{\lambda}+1).
\end{equation*}
By covering $((\partial\Omega)_{3R}\setminus(\partial\Omega)_{R})\times(-R,R)$ with finitely many$\{\Psi(Q^1_{i})\}$,
whose number is $C(2R)^{-(n-1)}$, there holds
\begin{equation*}
\mathcal{H}^n\left\{z\in((\partial\Omega)_{3R}\setminus(\partial\Omega)_{R})\times(-R,R)\ |\ \bar{u}(z)=0\right\}\leq CTR(\sqrt{\lambda}+1).
\end{equation*}
Again from the fact that $\bar{u}(x,x_{n+1})=u(x)e^{\sqrt{\lambda}x_{n+1}}$ and $e^{\sqrt{\lambda}x_{n+1}}\neq0$,
\begin{equation*}
\mathcal{H}^{n-1}\left\{x\in((\partial\Omega)_{3R}\setminus(\partial\Omega)_{R})\ |\ u(x)=0\right\}\leq CT(\sqrt{\lambda}+1).
\end{equation*}
By using the same argument to the cube $Q^2=[3R,7R]^n\times[-2R,2R]$ in $y-$space, and covering $((\partial\Omega)_{7R}\setminus(\partial\Omega)_{3R})\times(-2R,2R)$ with finitely many $\{\Psi(Q^2_{i})\}$, whose number is $C(4R)^{-(n-1)}$,  there also holds
\begin{equation*}
\mathcal{H}^{n-1}\left\{x\in((\partial\Omega)_{7R}\setminus(\partial\Omega)_{3R})\ |\ u(x)=0\right\}\leq CT(\sqrt{\lambda}+1).
\end{equation*}
Similarly, on the $j$-th step ($j>1$), one has
\begin{equation}
\mathcal{H}^{n-1}\left\{x\in\left((\partial\Omega)_{(2^{j+1}-1)R}
\setminus(\partial\Omega)_{(2^{j}-1)R}\right)\ |\ u(x)=0\right\}\leq CT(\sqrt{\lambda}+1).
\end{equation}
Repeat this argument until the side length of the cube $Q^{m}$ is larger than $r_0$. This needs $m-$th step, where
$m\leq C\log_{2}\frac{r_0}{R}$.
Then
\begin{align*}
&\mathcal{H}^{n-1}\left\{x\in(\partial\Omega)_{r_0}\ |\ u(x)=0\right\}\nonumber
\\&=\mathcal{H}^{n-1}\left\{x\in(\partial\Omega)_{R}\ |\ u(x)=0\right\}\nonumber
+\sum\limits_{j=1}^{m}\mathcal{H}^{n-1}\left\{x\in\left((\partial\Omega)_{(2^{j+1}-1)R}
\setminus(\partial\Omega)_{(2^{j+1}-1)R}\right)\ |\ u(x)=0\right\}\nonumber
\\&\leq CA^{n+2}(\sqrt{\lambda}+1)+CTm(\sqrt{\lambda}+1)\nonumber
\\&\leq CA^{n+2}(\sqrt{\lambda}+1)+CT\log_{2}\frac{r_0}{R}(\sqrt{\lambda}+1).
\end{align*}
{\it Step 7.}
Finally from Lemma $\ref{interior nodal set}$,
\begin{equation*}
\mathcal{H}^{n-1}\left\{x\in \Omega_{\frac{r_0}{2}}\ |\ u(x)=0\right\}\leq \frac{C}{r_0}(\sqrt{\lambda}+1).
\end{equation*}
Then we obtain
\begin{align*}
&\mathcal{H}^{n-1}\left\{x\in \Omega\ |\ u(x)=0\right\}\nonumber
\\&\leq \mathcal{H}^{n-1}\left\{x\in \Omega_{\frac{r_0}{2}}\ |\ u(x)=0\right\}+\mathcal{H}^{n-1}\left\{x\in(\partial\Omega)_{r_0}\ |\ u(x)=0\right\}\nonumber
\\&\leq \frac{C}{r_0}(\sqrt{\lambda}+1)+CA^{n+2}(\sqrt{\lambda}+1)+CT\log_{2}\frac{r_0}{R}(\sqrt{\lambda}+1)\\
&\leq C\Big(1+\log\left(\|\nabla V\|_{L^{\infty}(\Omega)}+1\right)\Big)\cdot\left(\|V\|_{L^{\infty}(\Omega)}^{\frac{1}{2}}+|\nabla V\|_{L^{\infty}(\Omega)}^{\frac{1}{2}}+1\right),
\end{align*}
thanks to
$R=R_0\|\nabla V\|_{L^{\infty}}^{-\frac{1}{2}}$, $\lambda=\|V\|_{W^{1,\infty}}$,  $A=A_0$, $R_0$, $r_0$ and $T$ are positive constants depending only on $n$ and $\Omega$.
\qed

\begin{remark}
If
$\|\nabla V\|_{L^{\infty}}$ is small, then all the terms depending on $\|\nabla V\|_{L^{\infty}}$ in the above arguments can be replaced
by a positive constant $C$ depending only on $n$ and $\Omega$.
So in this case, the conclusion becomes
\begin{equation*}
\mathcal{H}^{n-1}\left\{x\in \Omega\ |\ u(x)=0\right\}\leq C\left(\|V\|^{\frac{1}{2}}_{L^{\infty}}+1\right).
\end{equation*}
\end{remark}

\textbf{Funding}
Liu's research is supported by National Natural Science Foundation of China (No. 12471198). Yang's research is supported by National Natural Science Foundation of China (No.12431018).

\textbf{Data availability}
 No data has been generated or analysed during this study.\\

\textbf{Conflict of interest} The authors have no relevant financial or non-financial interests to disclose.


\begin{thebibliography}{99}
\bibitem{stability} G. Alessandrini, L. Rondi, E. Rosset, S. Vessella, The stability for the Cauchy problem for elliptic equations, Inverse Problems,  2009, 25(12): 123004, 47.

  \bibitem{Ar2013} S. Ariturk, Lower bounds for nodal sets of Dirichlet and Neumann eigenfunctions, Communications in Mathematical Physics,  2013, 317(3): 817-825.

\bibitem{B2012}L. Bakri,  Quantitative uniqueness for Schr\"{o}dinger operator, Indiana Univ. Math. J.,  2012, 61(4): 1565-1580.

 \bibitem{BK}J. Bourgain, C. Kenig,  On localization in the continuous Anderson-Bernoulli model in higher dimension,
 Invent. Math.,  2005, 161(2): 389-426.

\bibitem{Br1978}J. Br\"{u}ning, \"{U}ber Knoten von Eigenfunktionen des Laplace-Beltrami Operators, Mathematische Zeitschrift, 1978, 158: 15-21.


\bibitem{c1939}T. Carleman,  Sur un probl\`{e}me  d'unicit\'{e} pur les syst\`{e}mes d'\'{e}quations aux d\'{e}riv\'{e}es partielles \`{a} deux variables ind\'{e}pendantes,  Ark. Mat., Astr. Fys., 1939,  26, 1-9.


\bibitem{cg1999}F. Colombini,  C. Grammatico,   Some remarks on strong unique continuation for the Laplace operator and its powers, Communications in Partial Differential Equations, 1999, 24(5-6): 1079-1094.



\bibitem{DLW2021}B. Davey,  C.L. Lin,    J.N.Wang,   Strong unique continuation for the Lam\'{e} system with less regular coefficients, Math. Ann.,  2021, 381: 1005-1029.


\bibitem{Davy} B. Davey, Quantitative unique continuation for Schr\"{o}dinger operator, J. Func. Anal., 2020, 279 (4), 108566.


\bibitem{Donnelly} H. Donnelly, C. Fefferman, Nodal sets of eigenfunctions on Riemannian manifolds, Invent. Math., 1988, 93, 161-183.

\bibitem{Donnelly2} H. Donnelly, C. Fefferman, Nodal sets for eigenfunctions of the Laplacian on surfaces, J. Amer. Math. Soc., 1990, 3 (2), 333-353.

\bibitem{DF1990}H. Donnelly, C. Feffermann, Nodal sets of eigenfunctions: Riemannian manifolds with boundary, Analysis, et cetera,  1990, 251-262, Academic Press, Boston, MA.


\bibitem{G-L1987} N. Garofalo, F.H. Lin, Unique continuation for elliptic operators: a geometric variational approach,  Comm. Pure Appl. Math., 1987, 40 (3): 347-366.

\bibitem{GT} D. Gilbarg, N.S. Tr\"udinger, Elliptic partial differential equations of second order,  2003, Springer-Verlag Berlin Heidelberg New York.

\bibitem{hanlin}Q.Han, F.H. Lin, Elliptic partial differential equations, New York: American Mathematical Society, 1997, Providence, RI.

\bibitem{Hardt} R. Hardt, L. Simon, Nodal sets for solutions of elliptic equations, Journal of Differential Geometry, 1989, 30, 505-522.



\bibitem{jk1985}D. Jerison,  C. Kenig, Unique continuation and absence of positive eigenvalues for Schr\"{o}dinger operators,   Ann. of Math., 1985, 121(3): 463-488.

\bibitem{K}C. Kenig, Some recent applications of unique continuation. In: Recent Developments in Nonlinear Partial
Differential Equations, 2007, volume 439 of Contemp. Math., 25-56. Amer. Math. Soc., Providence, RI.


\bibitem{h2001}H. Koch,  D. Tataru,  Carleman estimates and unique continuation for second order elliptic equations with nonsmooth coefficients, Comm. Pure Appl. Math., 2001, 54(3), 339-360.


\bibitem{K1995}I. Kukavica, Nodal volumes for eigenfunctions of analytic regular elliptic problems, Journal  d'Analyse Mathimatique, 1995, 67, 269-280.

\bibitem {Kukavica}I. Kukavica, K. Nystr\"{o}m, Unique continuation on the boundar for Dini domains, Proceedings of the American Mathematical Society, 1998, 126, 441-446.

\bibitem{K1998}I.  Kukavica,  Quantitative uniqueness for second-order elliptic operators,  Duke Math. J.,  1998, 91(2), 225-240.

\bibitem{Lin} F.H. Lin, Nodal sets of solutions of elliptic and parabolic equations,  Comm. Pure Appl. Math.,  1991, 44(3), 287-308.

\bibitem{lin2022} F.H. Lin, J.Y. Zhu, Upper bounds of nodal sets for eigenfunctions of eigenvalue problems. Math. Ann., 2022,
382: 1957-1984.


\bibitem{Logunov} A. Logunov, Nodal sets of Laplace eigenfunctions: polynomial upper estimates of the Hausdorff measure, Ann. of Math.,  2018, 187(2), 221-239.

\bibitem{Logunov3}A. Logunov, Nodal sets of Laplace eigenfunctions: proof of Nadirashvili's conjecture and the lower bound in Yau's conjecture, Ann. of Math., 2018, 187(1), 241-262.
\bibitem{LM}A. Logunov, E. Malinnikova, Nodal sets of Laplace eigenfunctions: estimates of
the Hausdorff measure in dimensions two and three, 50 years with Hardy spaces, 333-344, Oper. Theory Adv. Appl.,  2018, 261, Birkha\"{u}ser/Springer.

\bibitem{Logunov2} A. Logunov, E. Malinnikova, N. Nadirashvili, F. Nazarov, The sharp upper bound for the area of the nodal sets of Dirichlet Laplace eigenfunctions, Geom. Funct. Anal.,  2021, 31, 1219-1244.


  \bibitem{M} V.Z. Meshkov,  On the possible rate of decay at infinity of solutions of second order partial differential
equations,  Math. USSR SB.,  1992, 72, 343-361.

\bibitem{TY2} L. Tian, X. P. Yang, Measure upper bounds for nodal sets of eigenfunctions of the bi-harmonic operator, J. London Math. Soc.,  2022, 105(3), 1936-1973.

  \bibitem{Yau}  S.-T. Yau, Problem section, Seminar on Differential Geometry, Annals of Mathematical
Studies 102, Princeton, 1982, 669-706.


  \bibitem{zhu2016}J.Y. Zhu, Quantitative uniqueness of elliptic equations, American Journal of Mathematics, 2016, 138(3), 733-762.

\end{thebibliography}
\end{document}